\newtheoremstyle{mystyle}
  {}
  {}
  {\itshape}
  {}
  {\bfseries}
  {.}
  { }
  {}
\theoremstyle{mystyle}
\newtheorem{theorem}{Theorem}[section]
\newtheorem{lemma}[theorem]{Lemma}
\newtheorem{corollary}[theorem]{Corollary}
\newtheorem{observation}[theorem]{Observation}
\newtheorem{conjecture}[theorem]{Conjecture}
\newtheorem{claim}{Claim}[theorem]
\newenvironment{claimproof}{\noindent {\emph{Proof of Claim.}}}{\hfill$\blacksquare$\medskip}
\newcommand{\cA}{\mathcal{A}}
\newcommand{\cB}{\mathcal{B}}
\newcommand{\colo}[1]{\ensuremath{#1}-\textsc{Coloring}\xspace}
\newcommand{\NP}{\textsf{NP}}
\newcommand{\obs}[1]{\mathsf{Obstructions}(#1)}
\newcommand{\hull}{\mathsf{hull}}
\begin{document}

\title{Minimal obstructions to $C_5$-coloring in hereditary graph classes\footnote{An extended abstract of this paper appeared in the proceedings of the 49th International Symposium on Mathematical Foundations of Computer Science (MFCS 2024)~\cite{MFCS2024}.}}
\author{{\sc Jan GOEDGEBEUR\footnote{Department of Computer Science, KU Leuven Campus Kulak-Kortrijk, 8500 Kortrijk, Belgium}\;\footnote{Department of Applied Mathematics, Computer Science and Statistics, Ghent University, 9000 Ghent, Belgium}\;,
Jorik JOOKEN\footnotemark[1]\;,
Karolina OKRASA\footnote{Warsaw University of Technology, Poland}\;, }\\[1mm]
Pawe{\l} \MakeUppercase{Rz{\k{a}}{\.z}ewski}\footnote{Warsaw University of Technology and University of Warsaw, Poland. Pawe{\l} Rz{\k{a}}{\.z}ewski is supported by a Polish National Science Centre grant no. 2018/31/D/ST6/00062.}\;\,,
and {Oliver SCHAUDT\footnote{University of Cologne, Germany}}\;\footnote{E-mail addresses: jan.goedgebeur@kuleuven.be; jorik.jooken@kuleuven.be; karolina.okrasa@pw.edu.pl; pawel.rzazewski@pw.edu.pl;
oliver.schaudt@bayer.com}}
\date{}

\maketitle

\begin{abstract}
For graphs $G$ and $H$, an $H$-coloring of $G$ is an edge-preserving mapping from $V(G)$ to $V(H)$.
Note that if $H$ is the triangle, then $H$-colorings are equivalent to $3$-colorings.
In this paper we are interested in the case that $H$ is the five-vertex cycle $C_5$.

A minimal obstruction to $C_5$-coloring is a graph that does not have a $C_5$-coloring, but every proper induced subgraph thereof has a $C_5$-coloring. In this paper we are interested in minimal obstructions to $C_5$-coloring in $F$-free graphs, i.e., graphs that exclude some fixed graph $F$ as an induced subgraph. Let $P_t$ denote the path on $t$ vertices, and let $S_{a,b,c}$ denote the graph obtained from paths $P_{a+1},P_{b+1},P_{c+1}$ by identifying one of their endvertices.

We show that there is only a finite number of minimal obstructions to $C_5$-coloring among $F$-free graphs, where $F \in \{ P_8, S_{2,2,1}, S_{3,1,1}\}$ and explicitly determine all such obstructions. This extends the results of Kami\'{n}ski and Pstrucha [Discr.\ Appl.\ Math.\ 261, 2019] who proved that there is only a finite number of $P_7$-free minimal obstructions to $C_5$-coloring, and of D\k{e}bski et al.\ [ISAAC 2022 Proc.] who showed that the triangle is the unique $S_{2,1,1}$-free minimal obstruction to $C_5$-coloring.

We complement our results with a construction of an infinite family of minimal obstructions to $C_5$-coloring, which are simultaneously $P_{13}$-free and $S_{2,2,2}$-free.
We also discuss infinite families of $F$-free minimal obstructions to $H$-coloring for other graphs $H$.

\bigskip\noindent \textbf{Keywords:} graph homomorphism, critical graphs, hereditary graph classes

\smallskip\noindent \textbf{MSC 2020:} 05C15, 05C85, 68W05, 68R10
\end{abstract}


\section{Introduction}
Out of a great number of interesting and elegant graph problems, the notion of graph coloring is, arguably, among the most popular and well-studied ones, not only from combinatorial, but also from algorithmic point of view.
For an integer $k \geq 1$, a \emph{$k$-coloring} of a graph $G$ is a function $c: V(G) \to \{1,\ldots,k\}$ such that for every edge $uv \in E(G)$ it holds that $c(u)\neq c(v)$.
For a fixed integer $k \geq 1$, the \colo{k} problem is a computational problem in which an instance is a graph $G$ and we ask  whether there exists a $k$-coloring of $G$.

The \colo{k} problem is known to be polynomial-time solvable if $k \leq 2$ and  \NP-complete for $k \geq 3$. 
Still, even if $k \geq 3$, it is often possible to obtain polynomial-time algorithms that solve \colo{k} if we somehow restrict the class of input instances, for example, to perfect graphs~\cite{DBLP:journals/combinatorica/GrotschelLS81,DBLP:journals/combinatorica/GrotschelLS84}, bounded-treewidth graphs~\cite{DBLP:journals/dam/ArnborgP89} or intersection graphs of geometric objects~\cite{DBLP:journals/siammax/GareyJMP80}.

Observe that these example classes are \emph{hereditary}, i.e., closed under deleting vertices.
Such a property is very useful in algorithm design, as it combines well with standard algorithmic techniques, like branching or divide-\&-conquer.
Therefore, if we want to study some computational problem in a restricted graph class $\mathcal{G}$,
choosing $\mathcal{G}$ to be hereditary appears to be reasonable.
For a fixed graph ${F}$, we say that a graph $G$ is \emph{$F$-free} if it does not contain $F$ as an induced subgraph.
If $\mathcal{F}$ is a family of graphs, we say that a graph $G$ is \emph{$\mathcal{F}$-free} if $G$ is $F$-free for every $F \in \mathcal{F}$.
Each hereditary class of graphs can be characterized by a (possibly infinite) family $\mathcal{F}$ of forbidden induced subgraphs. 

\label{sect:intro}
\paragraph{Coloring hereditary graph classes.}  
As a first step towards understanding the complexity of \colo{k} in hereditary classes it is natural to consider classes defined by a single induced subgraph $F$.
If $F$ contains a cycle or a vertex of degree at least 3, it follows from the classical results by Emden-Weinert~\cite{DBLP:journals/cpc/Emden-WeinertHK98}, Holyer~\cite{Holyer1981TheNO}, and Leven and Galil~\cite{DBLP:journals/jal/LevenG83} that for every $k \geq 3$, \colo{k} remains \NP-complete when restricted to $F$-free graphs.
Thus we are left with the case that $F$ is a \emph{linear forest}, i.e., every component of $F$ is a path.

However, if $F$ is a linear forest, the situation becomes more complicated.
For simplicity, let us focus on the case when $F$ is connected, i.e., $F$ is a path on $t$ vertices, denoted by $P_t$.
Then, \colo{k} is polynomial-time-solvable in $P_t$-free graphs if $t \leq 5$, or if $(k,t) \in \{(3,6),(3,7),(4,6)\}$~\cite{DBLP:journals/ejc/Huang16,DBLP:conf/soda/SpirklCZ19,DBLP:journals/combinatorica/BonomoCMSSZ18,DBLP:journals/algorithmica/HoangKLSS10}. 
On the other hand, for any $k \geq 4$, the \colo{k} problem is \NP-complete in $P_t$-free graphs for all other values of $t$~\cite{DBLP:journals/ejc/Huang16}.
The complexity of the remaining cases, i.e., \colo{3} of $P_t$-free graphs where $t \geq 8$, remains unknown:
we do not know polynomial-time algorithms nor any hardness proofs.
The general belief is that all these cases are in fact tractable, which is supported by the existence of a quasipolynomial-time algorithm for \colo{3} in $P_t$-free graphs, for every fixed $t$~\cite{DBLP:conf/sosa/PilipczukPR21}. For the summary of the results on the complexity of \colo{k} $P_t$-free graphs see Figure~\ref{fig:coloring}. 
Let us remark that there are also some results for disconnected forbidden linear forests~\cite{DBLP:journals/algorithmica/KlimosovaMMNPS20,DBLP:journals/algorithmica/ChudnovskyHSZ21}.

\paragraph{Minimal obstructions to $k$-coloring.} 
One can look at \colo{k} from another, purely combinatorial perspective.
Instead of asking whether a graph $G$ admits a $k$-coloring, we can ask whether it contains a \emph{dual object}, i.e., some structure that forces the chromatic number to be at least $k+1$.
For example, \colo{2} can be equivalently expressed as a question whether a graph contains an odd cycle.
As another example, \colo{k} restricted to perfect graphs is equivalent to the question of the existence of a $(k+1)$-clique, i.e., the complete graph on $k+1$ vertices, denoted by $K_{k+1}$.

In other words, odd cycles are \emph{minimal non-2-colorable graphs} and $(k+1)$-cliques are \emph{minimal non-$k$-colorable perfect graphs} (where minimality is defined with respect to the induced subgraph relation).
Formally, if a graph $G$ is not $k$-colorable, but every proper induced subgraph of it is $k$-colorable, we say that $G$ is \emph{vertex-$(k+1)$-critical} or is a \emph{minimal obstruction to $k$-coloring}.
We denote by $\obs{k}$ the set of all minimal obstructions to $k$-coloring.
Note that $\obs{k}$ naturally forms a family of dual objects---a graph is $k$-colorable if and only if it does not contain any graph from $\obs{k}$ as an induced subgraph.

Suppose that, for some $k$, there is a polynomial-time algorithm $\mathsf{Alg}_k$ that takes as an input a graph $G$ and answers whether it contains any graph from $\obs{k}$ as an induced subgraph (i.e., whether $G$ is \emph{not} $\obs{k}$-free).
From the discussion above it follows that the existence of $\mathsf{Alg}_k$ yields a polynomial-time algorithm for \colo{k}.
Thus it is unlikely that $\mathsf{Alg}_k$ exists for any $k \geq 3$.
However, it is still possible when we restrict the input graphs to a certain class $\mathcal{G}$ (like perfect graphs in the example above).
Recall that we are interested in the case that $\mathcal{G}=F\text{-free}$, where $F$ is a path.
Let us denote such a restriction $\mathsf{Alg}_k$ to $F$-free graphs by $\mathsf{Alg}_{k,F}$. 

Note that the existence of $\mathsf{Alg}_{k,F}$ is trivial if $(\obs{k} \cap F\text{-free})$ is finite; indeed, brute force works in this case. This line of arguments allows us to further refine cases that are polynomial-time solvable:
into pairs $(k,F)$, where $(\obs{k} \cap F\text{-free})$ is finite, and the others.
Recall that the algorithm for \colo{k} obtained for the former ones is able to produce a \emph{negative certificate}: a small (constant-size) witness that the input graph is \emph{not} $k$-colorable. We refer the reader to the survey of McConnell et al.~\cite{DBLP:journals/csr/McConnellMNS11} for more information about certifying algorithms.

It turns out that we can fully characterize all pairs $(k,F)$ for which $\obs{k} \cap F\text{-free}$ is finite.
It is well-known that $P_4$-free graphs (also known as \emph{cographs}) are perfect and thus the only minimal obstruction to $k$-coloring is the $(k+1)$-clique.
Bruce et al.~\cite{DBLP:conf/isaac/BruceHS09} proved that there is a finite number of minimal obstructions to 3-coloring among $P_5$-free graphs.
The result was later extended by Chudnovsky et al.~\cite{DBLP:journals/jct/ChudnovskyGSZ20} who showed that the family of $P_6$-free minimal obstructions to 3-coloring is  is also finite, and that this is no longer true among $P_7$-free graphs (and thus for $P_t$-free graphs for every $t \geq 7$).
If the number of colors is larger, things get more difficult faster: Hoàng et al.~\cite{DBLP:journals/dam/HoangMRSV15} showed that for each $k\geq 4$ there exists an infinite family of $P_5$-free minimal obstructions to $k$-coloring. See also Figure~\ref{fig:coloring}.

\begin{figure}
    \centering
    \includegraphics[page=6,scale=0.8]{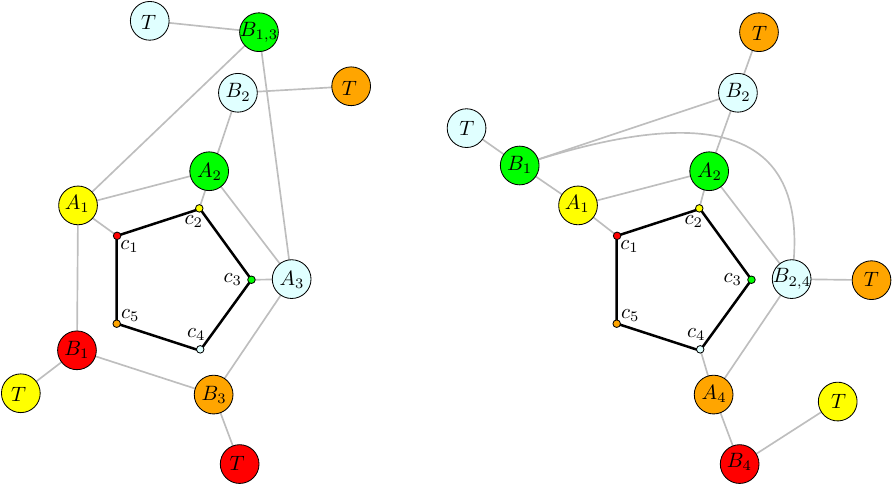}
    \caption{The complexity of \colo{k} $P_t$-free graphs.}
    \label{fig:coloring}
\end{figure}

\paragraph{$H$-coloring $F$-free graphs and minimal obstructions to $H$-colorings.} Graph colorings can be seen as a special case of \emph{graph homomorphisms}.
For graphs $G$ and $H$, an \emph{$H$-coloring} of $G$ is a function $c: V(G) \to V(H)$ such that for every edge $uv \in E(G)$ it holds that $c(u)c(v) \in E(H)$. The graph $H$ is usually called the \emph{target graph}.
It is straightforward to verify that homomorphisms from $G$ to the $k$-clique, are in one-to-one correspondence to $k$-colorings of $G$. For this reason one often refers to the vertices of $H$ as \emph{colors}.

For a fixed graph $H$, by \colo{H} we denote the computational problem that takes as an input a graph $G$ and asks whether $G$ admits a homomorphism to $H$. The complexity dichotomy for \colo{H} was proven by Hell and Ne\v{s}et\v{r}il~\cite{DBLP:journals/jct/HellN90}: the problem is polynomial-time solvable if $H$ is bipartite, and \NP-complete otherwise.

The complexity landscape of \colo{H} in $F$-free graphs for non-complete target graphs is far from being fully understood.
Chudnovsky et al.~\cite{DBLP:journals/iandc/ChudnovskyHRSZ23} proved that if $H$ is an odd cycle on at least 5 vertices, then \colo{H} is polynomial-time solvable in $P_9$-free graphs; they also showed a number of hardness results for more general variants of the homomorphism problem.
Feder and Hell~\cite{edgelists} and D\k{e}bski et al.~\cite{debski_et_al:LIPIcs.ISAAC.2022.14} studied the case when $H$ is an \emph{odd wheel}, i.e., an odd cycle with universal vertex added.
The most general algorithmic results were provided by Okrasa and Rzążewski~\cite{DBLP:conf/stacs/OkrasaR21} who showed that 
\begin{enumerate}[(OR1)]
    \item if $H$ does not contain $C_4$ as a subgraph, then \colo{H} can be solved in quasipolynomial time in $P_t$-free graphs for any fixed $t$ (note that a better running time here would also mean progress for  \colo{3} $P_t$-free graphs),
    \item if $H$ is of girth at least 5, then \colo{H} can be solved in subexponential time in $F$-free graphs, where $F$ is any fixed \emph{subdivided claw}, i.e., any graph obtained from the three-leaf star by subdividing edges.
\end{enumerate}

While these are not polynomial-time algorithms, no \NP-hardness proofs for these cases are known either.
To complete the picture, from~\cite{DBLP:conf/stacs/OkrasaR21} it also follows that if $H$ is a so-called \emph{projective core} that  contains $C_4$ as a subgraph, then there exists a $t$ such that \colo{H} is \NP-complete in $P_t$-free graphs (and thus also in graphs excluding some fixed subdivided claw). Furthermore, the hardness reductions even exclude any subexponential-time algorithms for these cases, assuming the Exponential Time Hypothesis (ETH).
Let us skip the definition of a projective core, as it is quite technical and not really relevant for this paper. However, it is worth pointing out that almost all graphs are projective cores~\cite{DBLP:journals/jgt/LuczakN04,DBLP:journals/dm/HellN92}.

Since we are interested in a finer classification of polynomial-time-solvable cases, we should be looking at pairs $(H,F)$ of graphs such that the \colo{H} problem is \emph{not} known to be \NP-complete in $F$-free graphs. From the discussion above it follows that there are two natural families of such pairs to consider: 
\begin{enumerate}[(i)]
    \item when $H$ does not contain $C_4$ as a subgraph and $F$ is a path,
    \item when $H$ is of girth at least 5 and $F$ is a subdivided claw.
\end{enumerate}

It is straightforward to generalize the notion of minimal obstructions to the setting of $H$-colorings.
A graph $G$ is called a \emph{minimal obstruction to $H$-coloring} if there is no $H$-coloring of $G$, but every proper induced subgraph of $G$ can be $H$-colored.

The area of minimal obstructions to $H$-coloring is rather unexplored.
In the setting of (i), Kami\'{n}ski and Pstrucha~\cite{DBLP:journals/dam/KaminskiP19} showed that for any $t\geq 5$, there are  finitely many minimal obstructions to $C_{t-2}$-coloring among $P_t$-free graphs.\footnote{While in~\cite{DBLP:journals/dam/KaminskiP19} the authors consider minimality with respect to the \emph{subgraph} relation, it is not hard to observe that bounded number of subgraph-minimal obstructions is equivalent to the bounded number of induced-subgraph-minimal obstructions.}
In particular, the family of $P_7$-free minimal obstructions to $C_5$-coloring is finite. 
In the setting of (ii), D\k{e}bski et al.~\cite{debski_et_al:LIPIcs.ISAAC.2022.14} showed that the triangle is the only minimal obstruction to $C_5$-coloring among graphs that exclude the \emph{fork}, i.e., the graph obtained from the three-leaf star by subdividing one edge once.

\paragraph{Our contribution.}
As our first result, we show the following strengthening of the result of Kami\'{n}ski and Pstrucha.
  \begin{restatable}{theorem}{thmfinitepaths}
  \label{thm:finite-paths}
    There are  19 minimal obstructions to $C_5$-coloring among $P_8$-free graphs.
  \end{restatable}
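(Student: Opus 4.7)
The plan is to combine a structural argument that bounds the order of any $P_8$-free minimal obstruction to $C_5$-coloring with an exhaustive computer search that pins down the exact count of $19$.

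Let $G$ be a $P_8$-free minimal obstruction to $C_5$-coloring. By minimality, $G$ is connected and, for every $v \in V(G)$, the graph $G - v$ admits a $C_5$-coloring $c$ that cannot be extended to $v$. Writing $V(C_5) = \mathbb{Z}_5$ with edges $\{i,i+1\}$, non-extendability means that for each $i \in \mathbb{Z}_5$ some neighbor of $v$ receives a color outside $\{i-1,i+1\}$; equivalently $c(N(v))$ is not contained in any closed neighborhood of $C_5$. In particular $|c(N(v))| \geq 2$ and the colors used on $N(v)$ must span a configuration that blocks every target vertex, a strong local restriction I would use repeatedly.

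Next, I would exploit the $P_8$-freeness to bound $|V(G)|$. Fix a longest induced path $P = p_1 p_2 \cdots p_k$ in $G$; since $G$ is $P_8$-free, $k \leq 7$. Every vertex $v$ outside $P$ has at least one neighbor on $P$, and the set $N(v) \cap V(P)$ is restricted so that $P$ cannot be extended or rerouted to give a $P_8$. Following the template of Kamiński and Pstrucha for $P_7$-free graphs, I would partition $V(G) \setminus V(P)$ into \emph{types} according to $N(v) \cap V(P)$; there are at most $2^7$ such types. Within each type, I would combine the blocking condition of the previous paragraph with the fact that $C_5$ has no induced $C_4$ to bound the number of vertices by a constant, yielding an absolute bound $N$ on $|V(G)|$. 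The argument here is the essential strengthening over the $P_7$-free case: the additional length of $P$ forces a more refined case analysis on how the colors around each vertex of $P$ propagate between layers.

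Finally, with the explicit bound $N$ in hand, I would verify the value $19$ by an exhaustive computer search: generate all graphs on at most $N$ vertices up to isomorphism (e.g.\ with \texttt{nauty}), filter for $P_8$-freeness and non-$C_5$-colorability via backtracking, and keep only those for which every vertex-deleted subgraph is $C_5$-colorable. The $19$ obstructions can then be listed explicitly. The main obstacle is making the bound $N$ small enough for this search to be feasible: a naive structural estimate could easily give $N$ in the hundreds, which is computationally hopeless. Success will therefore depend on tightening the per-type bounds -- most likely by showing that large sets of vertices of a single type are forced to contain non-trivial twin classes or to realize a forbidden substructure -- and, if necessary, by guiding the search to extend a fixed longest induced path rather than enumerating all graphs of order $\leq N$ from scratch.
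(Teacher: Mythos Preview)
Your plan diverges substantially from the paper's proof, and it contains both a concrete error and an unresolved core difficulty.

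\medskip
\textbf{A concrete error.} The assertion that ``every vertex $v$ outside $P$ has at least one neighbor on $P$'' is false even for connected $P_8$-free graphs. Take $P_7 = p_1\cdots p_7$, attach a vertex $v$ adjacent only to $p_4$, and a further vertex $w$ adjacent only to $v$. This graph is connected and $P_8$-free, its unique longest induced path is $p_1\cdots p_7$, yet $w$ has no neighbor on it. Nothing in the minimality of an obstruction rules this out, so your partition into at most $2^7$ types by $N(v)\cap V(P)$ does not cover $V(G)$. You would at minimum need to layer by distance from $P$ and control each layer, which is considerably more work than stated.

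\medskip
\textbf{The unresolved step.} Even granting a correct layering, the heart of your plan---bounding the number of vertices in each type---is left entirely to intuition (``blocking condition'', ``twin classes''). You acknowledge that a naive bound could be in the hundreds, which would make the brute-force search infeasible. Nothing in the proposal explains how to push the bound down, and the actual obstructions have up to $13$ vertices, so the margin is not large.

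\medskip
\textbf{What the paper does instead.} The paper avoids a global size bound altogether. It singles out four small gadgets $Q_1,\dots,Q_4$ (two $C_5$'s glued along $i$ vertices) and splits into two cases. If the obstruction contains some $Q_i$, an \emph{expansion algorithm} (Algorithm~\ref{algo:expand}) is seeded with $Q_i$ and grows the graph vertex by vertex, pruning via Corollary~\ref{cor:comparableGraphs} on comparable vertices and edges; this terminates and yields all $18$ non-triangle obstructions. If the obstruction is $\{K_3,Q_1,Q_2,Q_3,Q_4\}$-free, a purely combinatorial lemma shows it is in fact $C_5$-colorable: one fixes an induced $C_5$ (or $C_7$ if none exists), stratifies $V(G)$ by distance from it, and exhibits an explicit homomorphism to $C_5$. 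Thus no obstruction lives in this case. The combination of a self-terminating extension search with a colorability lemma replaces your size-bound-then-enumerate scheme entirely.
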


Let us sketch the proof of Theorem~\ref{thm:finite-paths}. Note that $K_3$ is a minimal obstruction for $C_5$-coloring, so from now on we focus on graphs that are $\{P_8,K_3\}$-free.
For $i \in \{1,2,3,4\}$, by $Q_i$ we denote the graph obtained from two copies of $C_5$ by identifying an $i$-vertex subpath of one cycle with an $i$-vertex subpath of the other one, see Figure~\ref{fig:qs}.
In the proof we separately consider minimal obstructions that contain some $Q_i$ as an induced subgraph, and those that are $\{Q_1,Q_2,Q_3,Q_4\}$-free.

\begin{figure}
    \centering
    \includegraphics[page=4,scale=0.7]{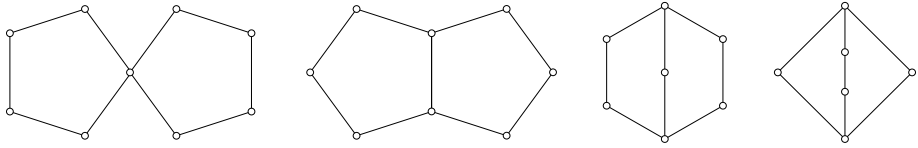}
    \caption{Graphs $Q_1, Q_2, Q_3$, and $Q_4$ (left to right).}
    \label{fig:qs}
\end{figure}

The intuition behind this is as follows. Notice that if $G$ contains an induced 5-vertex cycle, the vertices of this cycle must be mapped to the vertices of $C_5$ bijectively, respecting the ordering along the cycle.
Consequently, if $G$ contains some $Q_i$, the colorings of the vertices in $Q_i$ are somehow restricted. Combining several $Q_i$'s we might impose some contradictory constraints and thus build a graph that is not $C_5$-colorable.
However, as each $Q_i$ already contains quite long induced paths, we might hope that by combining several $Q_i$ we are either forced to create an induced $P_8$ (if we add only few edges between different $Q_i$'s) or $K_3$ (if we add too many such edges).
Thus the possibilities of building non-$C_5$-colorable graphs using this approach are somehow limited.
It turns out that this intuition is correct: there are 18 graphs that are $\{P_8,K_3\}$-free and contain $Q_i$, for some $i \in \{1,2,3,4\}$, as an induced subgraph. Together with $K_3$, they are shown in Figure~\ref{fig:p8FreeObstructions}.
This part of the proof is computer-aided.

\begin{figure}[h!]
\begin{center}
\begin{tikzpicture}[scale=0.28]
  \def\sides{3}
  \def\radius{3}

  \foreach \i in {1,...,\sides} {
    \fill ({360/\sides * \i}:\radius) circle (4pt);
  }
  
  \foreach \i in {1,2,3} {
    \draw ({360/\sides * (\i + 1)}:\radius) -- ({360/\sides * \i}:\radius);
  }
\end{tikzpicture} \quad
\begin{tikzpicture}[scale=0.28]
  \def\sides{8}
  \def\radius{3}

  \foreach \i in {1,...,\sides} {
    \fill ({360/\sides * \i}:\radius) circle (4pt);
  }
  
  \foreach \i in {1,3,4,5,7,8} {
    \draw ({360/\sides * (\i + 1)}:\radius) -- ({360/\sides * \i}:\radius);
    \draw ({360/\sides * (\i + 4)}:\radius) -- ({360/\sides * \i}:\radius);
    \draw ({360/\sides * (\i + 7)}:\radius) -- ({360/\sides * \i}:\radius);
  }
  \foreach \i in {2,6} {
    \draw ({360/\sides * (\i + 1)}:\radius) -- ({360/\sides * \i}:\radius);
    \draw ({360/\sides * (\i + 7)}:\radius) -- ({360/\sides * \i}:\radius);
  }
\end{tikzpicture}
\quad
\begin{tikzpicture}[scale=0.28]
  \def\sides{8}
  \def\radius{3}

  \foreach \i in {1,...,\sides} {
    \fill ({360/\sides * \i}:\radius) circle (4pt);
  }
  
  \foreach \i in {1,2,3,4,5,6,7,8} {
    \draw ({360/\sides * (\i + 1)}:\radius) -- ({360/\sides * \i}:\radius);
    \draw ({360/\sides * (\i + 4)}:\radius) -- ({360/\sides * \i}:\radius);
    \draw ({360/\sides * (\i + 7)}:\radius) -- ({360/\sides * \i}:\radius);
  }
\end{tikzpicture}
\quad
\begin{tikzpicture}[scale=0.28]
  \def\sides{8}
  \def\radius{3}

  \foreach \i in {1,...,\sides} {
    \fill ({360/\sides * \i}:\radius) circle (4pt);
  }
  
  \foreach \i in {1,3,5,7} {
    \draw ({360/\sides * (\i + 1)}:\radius) -- ({360/\sides * \i}:\radius);
    \draw ({360/\sides * (\i + 4)}:\radius) -- ({360/\sides * \i}:\radius);
    \draw ({360/\sides * (\i + 7)}:\radius) -- ({360/\sides * \i}:\radius);
  }
  \foreach \i in {2,4,6,8} {
    \draw ({360/\sides * (\i + 1)}:\radius) -- ({360/\sides * \i}:\radius);
    \draw ({360/\sides * (\i + 7)}:\radius) -- ({360/\sides * \i}:\radius);
  }
\end{tikzpicture}\\
\begin{tikzpicture}[scale=0.28]
  \def\sides{8}
  \def\radius{3}

  \foreach \i in {1,...,\sides} {
    \fill ({360/\sides * \i}:\radius) circle (4pt);
  }
  
  \foreach \i in {3,4,7,8} {
    \draw ({360/\sides * (\i + 1)}:\radius) -- ({360/\sides * \i}:\radius);
    \draw ({360/\sides * (\i + 4)}:\radius) -- ({360/\sides * \i}:\radius);
    \draw ({360/\sides * (\i + 7)}:\radius) -- ({360/\sides * \i}:\radius);
  }
  \foreach \i in {1,2,5,6} {
    \draw ({360/\sides * (\i + 1)}:\radius) -- ({360/\sides * \i}:\radius);
    \draw ({360/\sides * (\i + 7)}:\radius) -- ({360/\sides * \i}:\radius);
  }
\end{tikzpicture} \quad
\begin{tikzpicture}[scale=0.28]
  \def\sides{13}
  \def\radius{3}

  \foreach \i in {1,...,\sides} {
    \fill ({360/\sides * \i}:\radius) circle (4pt);
  }
  
  \foreach \i in {1,2,3,4,5,6,7,8,9,10,11,12,13} {
    \draw ({360/\sides * (\i + 1)}:\radius) -- ({360/\sides * \i}:\radius);
    \draw ({360/\sides * (\i + 4)}:\radius) -- ({360/\sides * \i}:\radius);
    \draw ({360/\sides * (\i + 9)}:\radius) -- ({360/\sides * \i}:\radius);
    \draw ({360/\sides * (\i + 12)}:\radius) -- ({360/\sides * \i}:\radius);
  }
\end{tikzpicture}\\
\begin{tikzpicture}[scale=0.28]
  \def\sides{9}
  \def\radius{3}

  \foreach \i in {1,...,\sides} {
    \fill ({360/\sides * \i}:\radius) circle (4pt);
  }
  
  \foreach \i in {1} {
    \draw ({360/\sides * (\i + 1)}:\radius) -- ({360/\sides * \i}:\radius);
    \draw ({360/\sides * (\i + 7)}:\radius) -- ({360/\sides * \i}:\radius);
  }
  \foreach \i in {2} {
    \draw ({360/\sides * (\i + 1)}:\radius) -- ({360/\sides * \i}:\radius);
        \draw ({360/\sides * (\i + 4)}:\radius) -- ({360/\sides * \i}:\radius);
    \draw ({360/\sides * (\i + 8)}:\radius) -- ({360/\sides * \i}:\radius);
  }
    \foreach \i in {3} {
    \draw ({360/\sides * (\i + 1)}:\radius) -- ({360/\sides * \i}:\radius);
        \draw ({360/\sides * (\i + 6)}:\radius) -- ({360/\sides * \i}:\radius);
    \draw ({360/\sides * (\i + 8)}:\radius) -- ({360/\sides * \i}:\radius);
  }
    \foreach \i in {4} {
    \draw ({360/\sides * (\i + 1)}:\radius) -- ({360/\sides * \i}:\radius);
    \draw ({360/\sides * (\i + 8)}:\radius) -- ({360/\sides * \i}:\radius);
  }
      \foreach \i in {5} {
    \draw ({360/\sides * (\i + 1)}:\radius) -- ({360/\sides * \i}:\radius);
    \draw ({360/\sides * (\i + 8)}:\radius) -- ({360/\sides * \i}:\radius);
  }
      \foreach \i in {6} {
    \draw ({360/\sides * (\i + 1)}:\radius) -- ({360/\sides * \i}:\radius);
        \draw ({360/\sides * (\i + 5)}:\radius) -- ({360/\sides * \i}:\radius);
    \draw ({360/\sides * (\i + 8)}:\radius) -- ({360/\sides * \i}:\radius);
  }
        \foreach \i in {7} {
    \draw ({360/\sides * (\i + 1)}:\radius) -- ({360/\sides * \i}:\radius);
    \draw ({360/\sides * (\i + 8)}:\radius) -- ({360/\sides * \i}:\radius);
  }
        \foreach \i in {8} {
    \draw ({360/\sides * (\i + 1)}:\radius) -- ({360/\sides * \i}:\radius);
        \draw ({360/\sides * (\i + 2)}:\radius) -- ({360/\sides * \i}:\radius);
    \draw ({360/\sides * (\i + 8)}:\radius) -- ({360/\sides * \i}:\radius);
  }
        \foreach \i in {9} {
    \draw ({360/\sides * (\i + 3)}:\radius) -- ({360/\sides * \i}:\radius);
    \draw ({360/\sides * (\i + 8)}:\radius) -- ({360/\sides * \i}:\radius);
  }
\end{tikzpicture} \quad
\begin{tikzpicture}[scale=0.28]
  \def\sides{10}
  \def\radius{3}

  \foreach \i in {1,...,\sides} {
    \fill ({360/\sides * \i}:\radius) circle (4pt);
  }
  
  \foreach \i in {1} {
    \draw ({360/\sides * (\i + 1)}:\radius) -- ({360/\sides * \i}:\radius);
    \draw ({360/\sides * (\i + 9)}:\radius) -- ({360/\sides * \i}:\radius);
  }
  \foreach \i in {2} {
    \draw ({360/\sides * (\i + 1)}:\radius) -- ({360/\sides * \i}:\radius);
        \draw ({360/\sides * (\i + 6)}:\radius) -- ({360/\sides * \i}:\radius);
    \draw ({360/\sides * (\i + 9)}:\radius) -- ({360/\sides * \i}:\radius);
  }
    \foreach \i in {3} {
    \draw ({360/\sides * (\i + 1)}:\radius) -- ({360/\sides * \i}:\radius);
    \draw ({360/\sides * (\i + 9)}:\radius) -- ({360/\sides * \i}:\radius);
  }
    \foreach \i in {4} {
    \draw ({360/\sides * (\i + 1)}:\radius) -- ({360/\sides * \i}:\radius);
    \draw ({360/\sides * (\i + 9)}:\radius) -- ({360/\sides * \i}:\radius);
  }
      \foreach \i in {5} {
    \draw ({360/\sides * (\i + 1)}:\radius) -- ({360/\sides * \i}:\radius);
    \draw ({360/\sides * (\i + 4)}:\radius) -- ({360/\sides * \i}:\radius);
        \draw ({360/\sides * (\i + 9)}:\radius) -- ({360/\sides * \i}:\radius);
  }
      \foreach \i in {6} {
    \draw ({360/\sides * (\i + 1)}:\radius) -- ({360/\sides * \i}:\radius);
        \draw ({360/\sides * (\i + 9)}:\radius) -- ({360/\sides * \i}:\radius);
  }
        \foreach \i in {7} {
    \draw ({360/\sides * (\i + 1)}:\radius) -- ({360/\sides * \i}:\radius);
    \draw ({360/\sides * (\i + 3)}:\radius) -- ({360/\sides * \i}:\radius);
        \draw ({360/\sides * (\i + 9)}:\radius) -- ({360/\sides * \i}:\radius);
  }
        \foreach \i in {8} {
        \draw ({360/\sides * (\i + 4)}:\radius) -- ({360/\sides * \i}:\radius);
    \draw ({360/\sides * (\i + 9)}:\radius) -- ({360/\sides * \i}:\radius);
  }
        \foreach \i in {9} {
    \draw ({360/\sides * (\i + 1)}:\radius) -- ({360/\sides * \i}:\radius);
    \draw ({360/\sides * (\i + 6)}:\radius) -- ({360/\sides * \i}:\radius);
  }
          \foreach \i in {10} {
    \draw ({360/\sides * (\i + 1)}:\radius) -- ({360/\sides * \i}:\radius);
    \draw ({360/\sides * (\i + 7)}:\radius) -- ({360/\sides * \i}:\radius);
        \draw ({360/\sides * (\i + 9)}:\radius) -- ({360/\sides * \i}:\radius);
  }
\end{tikzpicture} \quad
\begin{tikzpicture}[scale=0.28]
  \def\sides{13}
  \def\radius{3}

  \foreach \i in {1,...,\sides} {
    \fill ({360/\sides * \i}:\radius) circle (4pt);
  }
  
  \foreach \i in {1,7,13} {
    \draw ({360/\sides * (\i + 1)}:\radius) -- ({360/\sides * \i}:\radius);
    \draw ({360/\sides * (\i + 9)}:\radius) -- ({360/\sides * \i}:\radius);
    \draw ({360/\sides * (\i + 12)}:\radius) -- ({360/\sides * \i}:\radius);
  }
    \foreach \i in {2,4,8,11} {
    \draw ({360/\sides * (\i + 1)}:\radius) -- ({360/\sides * \i}:\radius);
    \draw ({360/\sides * (\i + 12)}:\radius) -- ({360/\sides * \i}:\radius);
  }
      \foreach \i in {3,9,10} {
    \draw ({360/\sides * (\i + 1)}:\radius) -- ({360/\sides * \i}:\radius);
        \draw ({360/\sides * (\i + 4)}:\radius) -- ({360/\sides * \i}:\radius);
            \draw ({360/\sides * (\i + 9)}:\radius) -- ({360/\sides * \i}:\radius);
    \draw ({360/\sides * (\i + 12)}:\radius) -- ({360/\sides * \i}:\radius);
  }
        \foreach \i in {5,6,12} {
    \draw ({360/\sides * (\i + 1)}:\radius) -- ({360/\sides * \i}:\radius);
        \draw ({360/\sides * (\i + 4)}:\radius) -- ({360/\sides * \i}:\radius);
            \draw ({360/\sides * (\i + 12)}:\radius) -- ({360/\sides * \i}:\radius);
  }
\end{tikzpicture} \quad
\begin{tikzpicture}[scale=0.28]
  \def\sides{13}
  \def\radius{3}

  \foreach \i in {1,...,\sides} {
    \fill ({360/\sides * \i}:\radius) circle (4pt);
  }
  
  \foreach \i in {1,7,8} {
    \draw ({360/\sides * (\i + 1)}:\radius) -- ({360/\sides * \i}:\radius);
    \draw ({360/\sides * (\i + 9)}:\radius) -- ({360/\sides * \i}:\radius);
    \draw ({360/\sides * (\i + 12)}:\radius) -- ({360/\sides * \i}:\radius);
  }
    \foreach \i in {2,11} {
    \draw ({360/\sides * (\i + 1)}:\radius) -- ({360/\sides * \i}:\radius);
    \draw ({360/\sides * (\i + 12)}:\radius) -- ({360/\sides * \i}:\radius);
  }
      \foreach \i in {3,4,9,10,13} {
    \draw ({360/\sides * (\i + 1)}:\radius) -- ({360/\sides * \i}:\radius);
        \draw ({360/\sides * (\i + 4)}:\radius) -- ({360/\sides * \i}:\radius);
            \draw ({360/\sides * (\i + 9)}:\radius) -- ({360/\sides * \i}:\radius);
    \draw ({360/\sides * (\i + 12)}:\radius) -- ({360/\sides * \i}:\radius);
  }
        \foreach \i in {5,6,12} {
    \draw ({360/\sides * (\i + 1)}:\radius) -- ({360/\sides * \i}:\radius);
        \draw ({360/\sides * (\i + 4)}:\radius) -- ({360/\sides * \i}:\radius);
            \draw ({360/\sides * (\i + 12)}:\radius) -- ({360/\sides * \i}:\radius);
  }
\end{tikzpicture} \quad
\begin{tikzpicture}[scale=0.28]
  \def\sides{13}
  \def\radius{3}

  \foreach \i in {1,...,\sides} {
    \fill ({360/\sides * \i}:\radius) circle (4pt);
  }
  
  \foreach \i in {1,3,6,7,10,12,13} {
    \draw ({360/\sides * (\i + 1)}:\radius) -- ({360/\sides * \i}:\radius);
        \draw ({360/\sides * (\i + 4)}:\radius) -- ({360/\sides * \i}:\radius);
    \draw ({360/\sides * (\i + 9)}:\radius) -- ({360/\sides * \i}:\radius);
    \draw ({360/\sides * (\i + 12)}:\radius) -- ({360/\sides * \i}:\radius);
  }
    \foreach \i in {2,8,9} {
    \draw ({360/\sides * (\i + 1)}:\radius) -- ({360/\sides * \i}:\radius);
        \draw ({360/\sides * (\i + 4)}:\radius) -- ({360/\sides * \i}:\radius);
    \draw ({360/\sides * (\i + 12)}:\radius) -- ({360/\sides * \i}:\radius);
  }
      \foreach \i in {4,5,11} {
    \draw ({360/\sides * (\i + 1)}:\radius) -- ({360/\sides * \i}:\radius);
        \draw ({360/\sides * (\i + 9)}:\radius) -- ({360/\sides * \i}:\radius);
            \draw ({360/\sides * (\i + 12)}:\radius) -- ({360/\sides * \i}:\radius);
    \draw ({360/\sides * (\i + 12)}:\radius) -- ({360/\sides * \i}:\radius);
  }
\end{tikzpicture} \quad 
\begin{tikzpicture}[scale=0.28]
  \def\sides{13}
  \def\radius{3}

  \foreach \i in {1,...,\sides} {
    \fill ({360/\sides * \i}:\radius) circle (4pt);
  }
  
  \foreach \i in {1,3,5,7,10,12} {
    \draw ({360/\sides * (\i + 1)}:\radius) -- ({360/\sides * \i}:\radius);
        \draw ({360/\sides * (\i + 4)}:\radius) -- ({360/\sides * \i}:\radius);
    \draw ({360/\sides * (\i + 9)}:\radius) -- ({360/\sides * \i}:\radius);
    \draw ({360/\sides * (\i + 12)}:\radius) -- ({360/\sides * \i}:\radius);
  }
    \foreach \i in {2} {
    \draw ({360/\sides * (\i + 1)}:\radius) -- ({360/\sides * \i}:\radius);
    \draw ({360/\sides * (\i + 12)}:\radius) -- ({360/\sides * \i}:\radius);
  }
      \foreach \i in {4,9,11} {
    \draw ({360/\sides * (\i + 1)}:\radius) -- ({360/\sides * \i}:\radius);
        \draw ({360/\sides * (\i + 9)}:\radius) -- ({360/\sides * \i}:\radius);
            \draw ({360/\sides * (\i + 12)}:\radius) -- ({360/\sides * \i}:\radius);
  }
        \foreach \i in {6,8,13} {
    \draw ({360/\sides * (\i + 1)}:\radius) -- ({360/\sides * \i}:\radius);
        \draw ({360/\sides * (\i + 4)}:\radius) -- ({360/\sides * \i}:\radius);
            \draw ({360/\sides * (\i + 12)}:\radius) -- ({360/\sides * \i}:\radius);
  }
\end{tikzpicture} \quad
\begin{tikzpicture}[scale=0.28]
  \def\sides{13}
  \def\radius{3}

  \foreach \i in {1,...,\sides} {
    \fill ({360/\sides * \i}:\radius) circle (4pt);
  }
  
  \foreach \i in {1,3,5,7,9,10,12,13} {
    \draw ({360/\sides * (\i + 1)}:\radius) -- ({360/\sides * \i}:\radius);
        \draw ({360/\sides * (\i + 4)}:\radius) -- ({360/\sides * \i}:\radius);
    \draw ({360/\sides * (\i + 9)}:\radius) -- ({360/\sides * \i}:\radius);
    \draw ({360/\sides * (\i + 12)}:\radius) -- ({360/\sides * \i}:\radius);
  }
    \foreach \i in {2} {
    \draw ({360/\sides * (\i + 1)}:\radius) -- ({360/\sides * \i}:\radius);
    \draw ({360/\sides * (\i + 12)}:\radius) -- ({360/\sides * \i}:\radius);
  }
      \foreach \i in {4,11} {
    \draw ({360/\sides * (\i + 1)}:\radius) -- ({360/\sides * \i}:\radius);
        \draw ({360/\sides * (\i + 9)}:\radius) -- ({360/\sides * \i}:\radius);
            \draw ({360/\sides * (\i + 12)}:\radius) -- ({360/\sides * \i}:\radius);
  }
        \foreach \i in {6,8} {
    \draw ({360/\sides * (\i + 1)}:\radius) -- ({360/\sides * \i}:\radius);
        \draw ({360/\sides * (\i + 4)}:\radius) -- ({360/\sides * \i}:\radius);
            \draw ({360/\sides * (\i + 12)}:\radius) -- ({360/\sides * \i}:\radius);
  }
\end{tikzpicture} \quad
\begin{tikzpicture}[scale=0.28]
  \def\sides{13}
  \def\radius{3}

  \foreach \i in {1,...,\sides} {
    \fill ({360/\sides * \i}:\radius) circle (4pt);
  }
  
  \foreach \i in {1,3,4,5,7,8,9,10,12,13} {
    \draw ({360/\sides * (\i + 1)}:\radius) -- ({360/\sides * \i}:\radius);
        \draw ({360/\sides * (\i + 4)}:\radius) -- ({360/\sides * \i}:\radius);
    \draw ({360/\sides * (\i + 9)}:\radius) -- ({360/\sides * \i}:\radius);
    \draw ({360/\sides * (\i + 12)}:\radius) -- ({360/\sides * \i}:\radius);
  }
    \foreach \i in {2} {
    \draw ({360/\sides * (\i + 1)}:\radius) -- ({360/\sides * \i}:\radius);
    \draw ({360/\sides * (\i + 12)}:\radius) -- ({360/\sides * \i}:\radius);
  }
      \foreach \i in {6} {
    \draw ({360/\sides * (\i + 1)}:\radius) -- ({360/\sides * \i}:\radius);
        \draw ({360/\sides * (\i + 4)}:\radius) -- ({360/\sides * \i}:\radius);
            \draw ({360/\sides * (\i + 12)}:\radius) -- ({360/\sides * \i}:\radius);
  }
        \foreach \i in {11} {
    \draw ({360/\sides * (\i + 1)}:\radius) -- ({360/\sides * \i}:\radius);
        \draw ({360/\sides * (\i + 9)}:\radius) -- ({360/\sides * \i}:\radius);
            \draw ({360/\sides * (\i + 12)}:\radius) -- ({360/\sides * \i}:\radius);
  }
\end{tikzpicture}\quad
\begin{tikzpicture}[scale=0.28]
  \def\sides{13}
  \def\radius{3}

  \foreach \i in {1,...,\sides} {
    \fill ({360/\sides * \i}:\radius) circle (4pt);
  }
  
  \foreach \i in {1,3,5,7,9,10,12} {
    \draw ({360/\sides * (\i + 1)}:\radius) -- ({360/\sides * \i}:\radius);
        \draw ({360/\sides * (\i + 4)}:\radius) -- ({360/\sides * \i}:\radius);
    \draw ({360/\sides * (\i + 9)}:\radius) -- ({360/\sides * \i}:\radius);
    \draw ({360/\sides * (\i + 12)}:\radius) -- ({360/\sides * \i}:\radius);
  }
    \foreach \i in {2,4} {
    \draw ({360/\sides * (\i + 1)}:\radius) -- ({360/\sides * \i}:\radius);
    \draw ({360/\sides * (\i + 12)}:\radius) -- ({360/\sides * \i}:\radius);
  }
      \foreach \i in {6,8} {
    \draw ({360/\sides * (\i + 1)}:\radius) -- ({360/\sides * \i}:\radius);
        \draw ({360/\sides * (\i + 4)}:\radius) -- ({360/\sides * \i}:\radius);
            \draw ({360/\sides * (\i + 12)}:\radius) -- ({360/\sides * \i}:\radius);
  }
        \foreach \i in {11,13} {
    \draw ({360/\sides * (\i + 1)}:\radius) -- ({360/\sides * \i}:\radius);
        \draw ({360/\sides * (\i + 9)}:\radius) -- ({360/\sides * \i}:\radius);
            \draw ({360/\sides * (\i + 12)}:\radius) -- ({360/\sides * \i}:\radius);
  }
\end{tikzpicture}\quad
\begin{tikzpicture}[scale=0.28]
  \def\sides{13}
  \def\radius{3}

  \foreach \i in {1,...,\sides} {
    \fill ({360/\sides * \i}:\radius) circle (4pt);
  }
  
  \foreach \i in {1,3,5,7,8,9,10,12} {
    \draw ({360/\sides * (\i + 1)}:\radius) -- ({360/\sides * \i}:\radius);
        \draw ({360/\sides * (\i + 4)}:\radius) -- ({360/\sides * \i}:\radius);
    \draw ({360/\sides * (\i + 9)}:\radius) -- ({360/\sides * \i}:\radius);
    \draw ({360/\sides * (\i + 12)}:\radius) -- ({360/\sides * \i}:\radius);
  }
    \foreach \i in {2} {
    \draw ({360/\sides * (\i + 1)}:\radius) -- ({360/\sides * \i}:\radius);
    \draw ({360/\sides * (\i + 12)}:\radius) -- ({360/\sides * \i}:\radius);
  }
      \foreach \i in {4,6} {
    \draw ({360/\sides * (\i + 1)}:\radius) -- ({360/\sides * \i}:\radius);
        \draw ({360/\sides * (\i + 4)}:\radius) -- ({360/\sides * \i}:\radius);
            \draw ({360/\sides * (\i + 12)}:\radius) -- ({360/\sides * \i}:\radius);
  }
        \foreach \i in {11,13} {
    \draw ({360/\sides * (\i + 1)}:\radius) -- ({360/\sides * \i}:\radius);
        \draw ({360/\sides * (\i + 9)}:\radius) -- ({360/\sides * \i}:\radius);
            \draw ({360/\sides * (\i + 12)}:\radius) -- ({360/\sides * \i}:\radius);
  }
\end{tikzpicture}\quad
\begin{tikzpicture}[scale=0.28]
  \def\sides{13}
  \def\radius{3}

  \foreach \i in {1,...,\sides} {
    \fill ({360/\sides * \i}:\radius) circle (4pt);
  }
  
  \foreach \i in {1,3,4,5,7,9,10,11,13} {
    \draw ({360/\sides * (\i + 1)}:\radius) -- ({360/\sides * \i}:\radius);
        \draw ({360/\sides * (\i + 4)}:\radius) -- ({360/\sides * \i}:\radius);
    \draw ({360/\sides * (\i + 9)}:\radius) -- ({360/\sides * \i}:\radius);
    \draw ({360/\sides * (\i + 12)}:\radius) -- ({360/\sides * \i}:\radius);
  }
    \foreach \i in {2,8} {
    \draw ({360/\sides * (\i + 1)}:\radius) -- ({360/\sides * \i}:\radius);
        \draw ({360/\sides * (\i + 9)}:\radius) -- ({360/\sides * \i}:\radius);
    \draw ({360/\sides * (\i + 12)}:\radius) -- ({360/\sides * \i}:\radius);
  }
      \foreach \i in {6,12} {
    \draw ({360/\sides * (\i + 1)}:\radius) -- ({360/\sides * \i}:\radius);
        \draw ({360/\sides * (\i + 4)}:\radius) -- ({360/\sides * \i}:\radius);
            \draw ({360/\sides * (\i + 12)}:\radius) -- ({360/\sides * \i}:\radius);
  }
\end{tikzpicture}\quad
\begin{tikzpicture}[scale=0.28]
  \def\sides{13}
  \def\radius{3}

  \foreach \i in {1,...,\sides} {
    \fill ({360/\sides * \i}:\radius) circle (4pt);
  }
  
  \foreach \i in {1,3,4,5,7,8,9,10,11,12,13} {
    \draw ({360/\sides * (\i + 1)}:\radius) -- ({360/\sides * \i}:\radius);
        \draw ({360/\sides * (\i + 4)}:\radius) -- ({360/\sides * \i}:\radius);
    \draw ({360/\sides * (\i + 9)}:\radius) -- ({360/\sides * \i}:\radius);
    \draw ({360/\sides * (\i + 12)}:\radius) -- ({360/\sides * \i}:\radius);
  }
    \foreach \i in {2} {
    \draw ({360/\sides * (\i + 1)}:\radius) -- ({360/\sides * \i}:\radius);
        \draw ({360/\sides * (\i + 9)}:\radius) -- ({360/\sides * \i}:\radius);
    \draw ({360/\sides * (\i + 12)}:\radius) -- ({360/\sides * \i}:\radius);
  }
      \foreach \i in {6} {
    \draw ({360/\sides * (\i + 1)}:\radius) -- ({360/\sides * \i}:\radius);
        \draw ({360/\sides * (\i + 4)}:\radius) -- ({360/\sides * \i}:\radius);
            \draw ({360/\sides * (\i + 12)}:\radius) -- ({360/\sides * \i}:\radius);
  }
\end{tikzpicture}\quad
\begin{tikzpicture}[scale=0.28]
  \def\sides{13}
  \def\radius{3}

  \foreach \i in {1,...,\sides} {
    \fill ({360/\sides * \i}:\radius) circle (4pt);
  }
  
  \foreach \i in {1,3,5,6,7,8,9,10,12} {
    \draw ({360/\sides * (\i + 1)}:\radius) -- ({360/\sides * \i}:\radius);
        \draw ({360/\sides * (\i + 4)}:\radius) -- ({360/\sides * \i}:\radius);
    \draw ({360/\sides * (\i + 9)}:\radius) -- ({360/\sides * \i}:\radius);
    \draw ({360/\sides * (\i + 12)}:\radius) -- ({360/\sides * \i}:\radius);
  }
    \foreach \i in {2,4} {
    \draw ({360/\sides * (\i + 1)}:\radius) -- ({360/\sides * \i}:\radius);
        \draw ({360/\sides * (\i + 4)}:\radius) -- ({360/\sides * \i}:\radius);
    \draw ({360/\sides * (\i + 12)}:\radius) -- ({360/\sides * \i}:\radius);
  }
      \foreach \i in {11,13} {
    \draw ({360/\sides * (\i + 1)}:\radius) -- ({360/\sides * \i}:\radius);
        \draw ({360/\sides * (\i + 9)}:\radius) -- ({360/\sides * \i}:\radius);
            \draw ({360/\sides * (\i + 12)}:\radius) -- ({360/\sides * \i}:\radius);
  }
\end{tikzpicture}
\end{center}
\caption{All $P_8$-free minimal obstructions to $C_5$-coloring. The graphs in the first row are $P_6$-free. The graphs in the second row are $P_7$-free, but not $P_6$-free. All other graphs are $P_8$-free, but not $P_7$-free.}
\label{fig:p8FreeObstructions}
\end{figure}
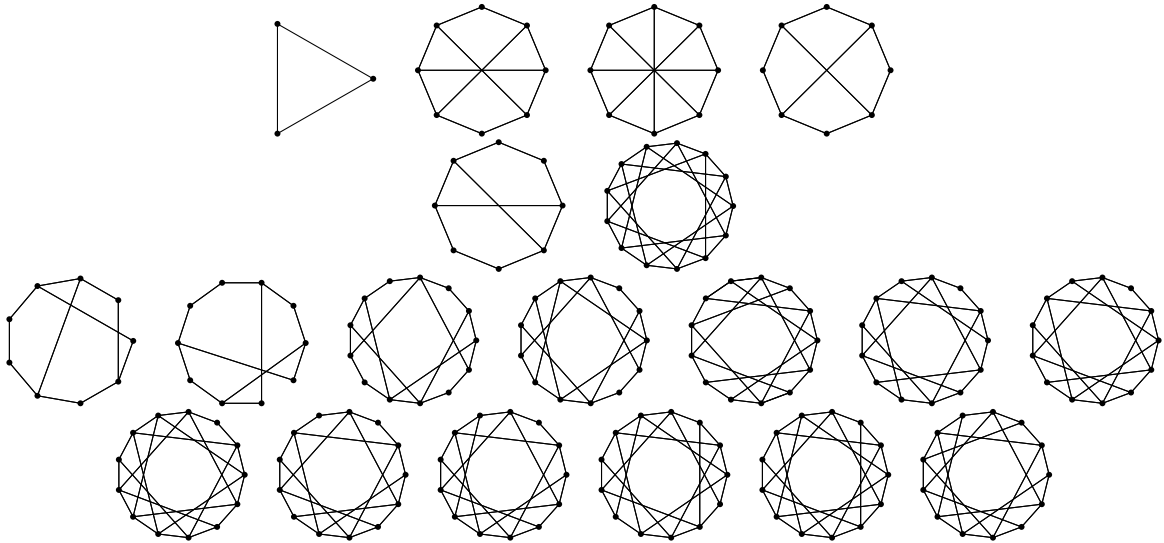

For the second step, we assume that our graph does not contain any $Q_i$, i.e., we consider graphs that are $\{P_8,K_3,Q_1,Q_2,Q_3,Q_4\}$-free. We show that such graphs are always $C_5$-colorable.
Consequently, each minimal obstruction to $C_5$-coloring (and, in general, every graph that is not $C_5$-colorable) was discovered in step 1. 

Before we discuss the second result, let us introduce the notation for subdivided claws. For integers $a,b,c \geq 1$, by $S_{a,b,c}$ we denote the graph obtained from the three-leaf star by subdividing each edge, respectively, $a-1$, $b-1$, and $c-1$ times.
Equivalently, $S_{a,b,c}$ is obtained from three paths $P_{a+1},P_{b+1},P_{c+1}$ by identifying one of their endpoints.

\begin{figure}[h!]
\begin{center}
\begin{tikzpicture}[scale=0.35]
  \def\sides{3}
  \def\radius{3}

  \foreach \i in {1,...,\sides} {
    \fill ({360/\sides * \i}:\radius) circle (4pt);
  }
  
  \foreach \i in {1,2,3} {
    \draw ({360/\sides * (\i + 1)}:\radius) -- ({360/\sides * \i}:\radius);
  }
\end{tikzpicture} \quad
\begin{tikzpicture}[scale=0.35]
  \def\sides{8}
  \def\radius{3}

  \foreach \i in {1,...,\sides} {
    \fill ({360/\sides * \i}:\radius) circle (4pt);
  }
  
  \foreach \i in {1,2,3,4,5,6,7,8} {
    \draw ({360/\sides * (\i + 1)}:\radius) -- ({360/\sides * \i}:\radius);
    \draw ({360/\sides * (\i + 4)}:\radius) -- ({360/\sides * \i}:\radius);
    \draw ({360/\sides * (\i + 7)}:\radius) -- ({360/\sides * \i}:\radius);
  }
\end{tikzpicture} \quad \\
\begin{tikzpicture}[scale=0.35]
  \def\sides{8}
  \def\radius{3}

  \foreach \i in {1,...,\sides} {
    \fill ({360/\sides * \i}:\radius) circle (4pt);
  }
  
  \foreach \i in {1,2,5,6} {
    \draw ({360/\sides * (\i + 1)}:\radius) -- ({360/\sides * \i}:\radius);
    \draw ({360/\sides * (\i + 7)}:\radius) -- ({360/\sides * \i}:\radius);
  }
  \foreach \i in {3,4,7,8} {
    \draw ({360/\sides * (\i + 1)}:\radius) -- ({360/\sides * \i}:\radius);
    \draw ({360/\sides * (\i + 4)}:\radius) -- ({360/\sides * \i}:\radius);
    \draw ({360/\sides * (\i + 7)}:\radius) -- ({360/\sides * \i}:\radius);
  }
\end{tikzpicture} \quad
\begin{tikzpicture}[scale=0.35]
  \def\sides{8}
  \def\radius{3}

  \foreach \i in {1,...,\sides} {
    \fill ({360/\sides * \i}:\radius) circle (4pt);
  }
  
  \foreach \i in {2,6} {
    \draw ({360/\sides * (\i + 1)}:\radius) -- ({360/\sides * \i}:\radius);
    \draw ({360/\sides * (\i + 7)}:\radius) -- ({360/\sides * \i}:\radius);
  }
  \foreach \i in {1,3,4,5,7,8} {
    \draw ({360/\sides * (\i + 1)}:\radius) -- ({360/\sides * \i}:\radius);
    \draw ({360/\sides * (\i + 4)}:\radius) -- ({360/\sides * \i}:\radius);
    \draw ({360/\sides * (\i + 7)}:\radius) -- ({360/\sides * \i}:\radius);
  }
\end{tikzpicture} \quad
\begin{tikzpicture}[scale=0.35]
  \def\sides{8}
  \def\radius{3}

  \foreach \i in {1,...,\sides} {
    \fill ({360/\sides * \i}:\radius) circle (4pt);
  }
  
  \foreach \i in {2,4,6,8} {
    \draw ({360/\sides * (\i + 1)}:\radius) -- ({360/\sides * \i}:\radius);
    \draw ({360/\sides * (\i + 7)}:\radius) -- ({360/\sides * \i}:\radius);
  }
  \foreach \i in {1,3,5,7} {
    \draw ({360/\sides * (\i + 1)}:\radius) -- ({360/\sides * \i}:\radius);
    \draw ({360/\sides * (\i + 4)}:\radius) -- ({360/\sides * \i}:\radius);
    \draw ({360/\sides * (\i + 7)}:\radius) -- ({360/\sides * \i}:\radius);
  }
\end{tikzpicture}
\begin{tikzpicture}[scale=0.35]
  \def\sides{13}
  \def\radius{3}

  \foreach \i in {1,...,\sides} {
    \fill ({360/\sides * \i}:\radius) circle (4pt);
  }
  
  \foreach \i in {1,2,3,4,5,6,7,8,9,10,11,12,13} {
    \draw ({360/\sides * (\i + 1)}:\radius) -- ({360/\sides * \i}:\radius);
    \draw ({360/\sides * (\i + 4)}:\radius) -- ({360/\sides * \i}:\radius);
    \draw ({360/\sides * (\i + 9)}:\radius) -- ({360/\sides * \i}:\radius);
    \draw ({360/\sides * (\i + 12)}:\radius) -- ({360/\sides * \i}:\radius);
  }
\end{tikzpicture} \quad
\end{center}
\caption{All $S_{2,2,1}$-free and all $S_{3,1,1}$-free minimal obstructions to $C_5$-coloring. The graphs in the first row are both $S_{2,2,1}$-free and $S_{3,1,1}$-free, whereas the first three graphs in the second row are $S_{3,1,1}$-free, but not $S_{2,2,1}$-free and the last graph in the second row is $S_{2,2,1}$-free, but not $S_{3,1,1}$-free.}
\label{fig:S221AndS311FreeObstructions}
\end{figure}
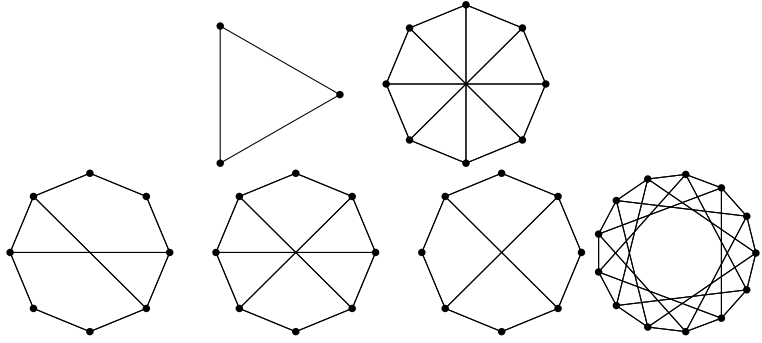

As our second result we show the following extension of the result of D\k{e}bski et al.~\cite{debski_et_al:LIPIcs.ISAAC.2022.14}.
  \begin{restatable}{theorem}{thmfiniteclaws}
  \label{thm:finite-claws}
    There are  3 minimal obstructions to $C_5$-coloring among $S_{2,2,1}$-free graphs, and
    5 minimal obstructions to $C_5$-coloring among $S_{3,1,1}$-free graphs.
  \end{restatable}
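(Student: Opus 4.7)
The plan is to adapt the two-phase strategy used for Theorem~\ref{thm:finite-paths} to the subdivided-claw setting. Since $K_3$ is itself a minimal obstruction to $C_5$-coloring, throughout the rest of the argument we may assume that $G$ is triangle-free. We then split into cases according to whether $G$ contains one of the graphs $Q_1,Q_2,Q_3,Q_4$ from Figure~\ref{fig:qs} as an induced subgraph. Note that the relevant subdivided claws $S_{2,2,1}$ and $S_{3,1,1}$ have only six vertices, so the hypothesis cuts down the local branching around any vertex much more aggressively than $P_8$-freeness does, which is consistent with the far shorter lists in Figure~\ref{fig:S221AndS311FreeObstructions}.

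\emph{Phase 1: graphs containing some $Q_i$.} Suppose $G$ contains some $Q_i$ as an induced subgraph. We claim that, subject to being $\{K_3, S_{2,2,1}\}$-free (respectively $\{K_3, S_{3,1,1}\}$-free), $G$ has bounded order. Intuitively, once we fix a copy of $Q_i$ in $G$, each additional vertex $v$ must attach to $Q_i$ without creating a triangle or the forbidden subdivided claw, and each $Q_i$ already contains several vertices of degree at least three together with long induced paths that make such attachments very restrictive. A careful case analysis, performed with computer assistance analogous to the one behind Theorem~\ref{thm:finite-paths}, yields a uniform bound on $|V(G)|$; enumerating all triangle-free, $S_{2,2,1}$-free (respectively $S_{3,1,1}$-free) graphs up to that bound and keeping only those that are minimal obstructions to $C_5$-coloring produces exactly the lists depicted in Figure~\ref{fig:S221AndS311FreeObstructions}.

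\emph{Phase 2: graphs avoiding every $Q_i$.} Suppose $G$ is in addition $\{Q_1, Q_2, Q_3, Q_4\}$-free. We show that $G$ is $C_5$-colorable, hence not a minimal obstruction. If $G$ is also $C_5$-free, then being triangle-free it has odd girth at least $7$, and a short structural argument (for instance, either $G$ is bipartite, in which case any edge of $C_5$ witnesses a $C_5$-coloring, or else one has to rule out longer odd cycles using $S_{2,2,1}$- / $S_{3,1,1}$-freeness) produces the coloring. Otherwise, fix an induced $5$-cycle $v_0v_1v_2v_3v_4$ and set $c(v_j)=j$. The $Q_i$-freeness forces any other induced $5$-cycle to be vertex-disjoint from this one, and together with triangle-freeness this pins the neighbourhood of each vertex outside the cycle to a short list of ``attachment types''; excluding $S_{2,2,1}$ (respectively $S_{3,1,1}$) then eliminates the ambiguous types, so each outside vertex has a uniquely determined image under a valid extension of $c$. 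Iterating the argument on the vertices farther from the cycle yields a $C_5$-coloring of all of $G$.

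The hard part will be Phase~2. While the admissible local attachment patterns are easy to read off from the forbidden induced subgraphs, showing that the resulting local prescriptions assemble consistently into a global $C_5$-coloring requires using the full strength of the subdivided-claw hypothesis, which prevents three ``spreading'' branches from emerging at a vertex outside the chosen $C_5$ and forcing mutually incompatible color choices. A secondary difficulty is making sure that the computer-aided enumeration in Phase~1 is complete; this presumably relies on a finitely bounded extension procedure so that the search space is provably exhaustive.
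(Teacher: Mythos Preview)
Your plan mirrors the $P_8$-free proof by splitting on the presence of some $Q_i$, but the paper takes a different and simpler route for the subdivided-claw cases: it splits on whether $G$ contains an induced $C_5$. The computer search (Algorithm~\ref{algo:expand}) is launched directly from $I=C_5$, and it terminates for both $F=S_{2,2,1}$ and $F=S_{3,1,1}$; this already disposes of \emph{every} non-triangle minimal obstruction that contains a $5$-cycle, with no need to treat the four $Q_i$ separately. The structural lemma then only has to deal with $\{K_3,C_5,F\}$-free graphs, and here the subdivided-claw hypothesis bites extremely hard: a shortest odd cycle $C=C_t$ with $t\ge 7$ exists; every vertex of $G$ has exactly two neighbours on $C$, necessarily of the form $c_{i-1},c_{i+1}$ (a single neighbour already creates an induced $S_{2,2,1}$ or $S_{3,1,1}$ inside $C\cup\{v\}$, while any other two-neighbour pattern or a third neighbour gives a shorter odd cycle); and there are no vertices at distance $\ge 2$ from $C$ for the same reason. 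Thus $G$ is literally a blow-up of $C_t$ along the classes $A_{i-1,i+1}$, and an explicit $C_5$-colouring is immediate.

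By contrast, your Phase~2 must also handle graphs that contain an induced $C_5$ but no $Q_i$, and your sketch for that subcase is too optimistic. The assertion that ``each outside vertex has a uniquely determined image under a valid extension of $c$'' is simply not true: even in the $P_8$-free analogue (Case~2A of the paper) the colouring is not forced, and the argument there requires a layered decomposition $C,A,B,T$, three subcases, and non-obvious colour choices on the $B$-classes. Nothing in $S_{2,2,1}$- or $S_{3,1,1}$-freeness makes this easier. Your plan is not wrong in principle---one could presumably push through a Case~2A style argument with the claw hypotheses replacing $P_8$-freeness---but it is strictly more work than the paper's approach, not less. The insight you are missing is that for these six-vertex forbidden subgraphs the generation algorithm already terminates when seeded with the bare $C_5$, so the entire $Q_i$ decomposition is unnecessary and the residual structural case collapses to the easy ``blown-up odd cycle'' picture above.
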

These graphs are shown in Figure~\ref{fig:S221AndS311FreeObstructions}. The proof is similar to the proof of Theorem~\ref{thm:finite-paths}.
First, we consider minimal obstructions that contain an induced $C_5$ and, using the computer search, we show that there is only a finite number of them.
Then, we show that graphs that exclude $K_3$ (as it is a minimal obstruction by itself), $C_5$, and also one of $S_{2,2,1}, S_{3,1,1}$, are either bipartite or are ``blown-up cycles'' -- in both cases $C_5$-colorability is straightforward to show.

\medskip

We complement these results with a construction of an infinite family of minimal obstructions to $C_5$-coloring.
\begin{restatable}{theorem}{thminfinite}
 \label{thm:infinite}
  There is an infinite family of minimal obstructions to $C_5$-coloring, which simultaneously exclude $P_{13}$, $S_{2,2,2}$,
$S_{5,5,1}$, $S_{11,1,1}$, and $S_{8,2,1}$ as an induced subgraph.
  \end{restatable}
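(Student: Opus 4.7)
The plan is to exhibit, for an infinite set of positive integers $N$, a family $\{G_n : n \in N\}$ of minimal obstructions to $C_5$-coloring that are also $\{P_{13}, S_{2,2,2}, S_{5,5,1}, S_{11,1,1}, S_{8,2,1}\}$-free. The construction I would pursue is a cyclic one built from a single small \emph{shift gadget} $H$: a graph with two distinguished non-adjacent attachment vertices $u,v$ such that every homomorphism $c : H \to C_5$ satisfies $c(v) - c(u) \equiv t \pmod 5$ for some fixed nonzero residue $t$. The rigidity of homomorphisms $C_5 \to C_5$ (each such map is a rotation or a reflection) suggests that induced $C_5$'s are the natural backbone for $H$; the extra vertices and edges of $H$ are chosen both to force the shift $t$ and to keep the local structure rich enough to kill long induced paths and subdivided claws.

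The graph $G_n$ is then formed by cyclically concatenating $n$ copies $H_1, \dots, H_n$ of $H$, identifying the $v$-port of $H_i$ with the $u$-port of $H_{i+1 \bmod n}$. Non-colorability of $G_n$ follows because shifts along the gadgets accumulate: any $C_5$-coloring forces $nt \equiv 0 \pmod 5$, which fails whenever $\gcd(n,5)=1$. Taking $N = \{ n \geq n_0 : \gcd(n,5)=1 \}$ for a sufficiently large $n_0$ therefore gives infinitely many non-$C_5$-colorable graphs. Minimality follows because deleting any vertex opens the cycle, leaving a chain of gadgets that can be colored greedily from one endpoint by propagating the shift; I would write this coloring out explicitly and check that it is indeed a homomorphism to $C_5$.

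The technical heart of the argument, and the step I expect to be hardest, is the induced-subgraph verification. Each forbidden graph has at most $14$ vertices; hence any induced copy in $G_n$ involves vertices from only a bounded number of consecutive gadgets (depending on $|V(H)|$ and the maximum size among the forbidden graphs). This localizes the check: for $n$ larger than a small constant, every induced copy of a forbidden graph in $G_n$ corresponds, via a cyclic window, to an induced copy in a bounded-length ``path of gadgets''. A finite case analysis---most likely computer-assisted, in the spirit of the proofs of Theorems~\ref{thm:finite-paths} and~\ref{thm:finite-claws}---then certifies that no such induced copy exists. The exact choice of $H$ and of the attachment pattern will be dictated by the demands of this finite check; in particular, the numerics $13$, $(5,5,1)$, $(11,1,1)$, $(8,2,1)$ and $(2,2,2)$ should drop out naturally as the largest forbidden templates that the gadget's diameter and branching structure can accommodate while still producing a nonzero shift.
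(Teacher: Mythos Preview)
Your overall strategy---a cyclic construction whose non-colorability comes from accumulated shifts, together with a localization argument reducing the forbidden-subgraph check to a finite computation---is in the same spirit as the paper. However, the specific gadget you postulate cannot exist. You ask for a graph $H$ with ports $u,v$ such that \emph{every} homomorphism $c:H\to C_5$ satisfies $c(v)-c(u)\equiv t\pmod 5$ for one fixed nonzero $t$. But $C_5$ has a reflection automorphism $\sigma:i\mapsto -i$; composing any homomorphism $c$ with $\sigma$ yields another homomorphism with shift $-t$. Since $t\not\equiv -t\pmod 5$ for $t\neq 0$, no gadget can pin down a single shift through a one-vertex port. If you weaken the requirement to $c(v)-c(u)\in\{t,-t\}$, the accumulation argument collapses: for any $n\geq 5$ one can choose signs $\epsilon_i\in\{\pm1\}$ with $\sum_i\epsilon_i t\equiv 0\pmod 5$, so only finitely many $G_n$ would be non-colorable.

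The paper sidesteps this obstruction by a different construction. Its graphs $G_{5,p}$ are circulant-like: every five \emph{consecutive} vertices induce a $C_5$, and two consecutive such $C_5$'s share four vertices rather than one. This overlap is what rigidifies the coloring: once the first $C_5$ is colored, each subsequent vertex has its color forced, so there is no per-gadget sign ambiguity. Non-colorability then follows from a single wrap-around edge. For the forbidden-subgraph part, the paper proves a clean monotonicity lemma (if $G_{5,p-1}$ is $F$-free then so is $G_{5,p}$, for $p\geq |V(F)|+2$), which reduces each $F$-freeness claim to checking $G_{5,p}$ for $p\leq |V(F)|+1$; the $S_{2,2,2}$-freeness is also proved combinatorially via a $5K_2$-freeness lemma. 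If you want to rescue your gadget approach, you would need ports that carry enough structure (at minimum an edge, and realistically a longer path or a full $C_5$) to kill the reflection freedom between consecutive copies; at that point you are essentially rediscovering the paper's overlapping-cycle construction.
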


The construction from Theorem~\ref{thm:infinite} is obtained by generalizing the infinite family of $P_7$-free minimal obstructions to 3-coloring, provided by Chudnovsky et al.~\cite{DBLP:journals/jct/ChudnovskyGSZ20}.
The idea can be further generalized. Let the \textit{odd girth} of $H$ be the length of a shortest odd cycle in $H$ (and keep it undefined for bipartite graphs).

\begin{restatable}{theorem}{thminfinitepathsgen}
 \label{thm:infinite-gen}
  Let $q \geq 3$ be odd, and let $H$ be a graph of odd girth $q$ that does not contain $C_4$ as a subgraph.
     There is an infinite family of minimal obstructions to $H$-coloring that are $\{P_{3q-1}, S_{2,2,2}, S_{\nicefrac{3(q-1)}{2}, \nicefrac{3(q-1)}{2}, 1}\}$-free.
  \end{restatable}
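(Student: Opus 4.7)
The plan is to re-examine the family $\{G_n\}_{n \ge 1}$ constructed in the proof of Theorem~\ref{thm:infinite} and to verify that every numerical ingredient in it scales linearly with the odd girth $q$ of the target. Specifically, the construction for $C_5$ uses certain odd paths/cycles of lengths tied to $5$ (the odd girth of $C_5$) together with three ``marked'' vertices whose images in any hypothetical $H$-coloring are constrained by the $C_4$-freeness of $H$; for general $H$ of odd girth $q$ and $C_4$-free, I would replace each numerical constant tied to $5$ by the corresponding constant tied to $q$, to obtain a new family $\{G_n^{(q)}\}_{n \ge 1}$ whose skeleton has $O(q)$ vertices plus $n$ pairwise non-adjacent ``twin'' vertices attached at a single location.

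For non-$H$-colorability, I would assume $c:V(G_n^{(q)})\to V(H)$ is a homomorphism and trace $c$ along the portions of the skeleton connecting the three marked vertices, forcing their images to lie at specific walk-distances in $H$. The twin vertices together with $C_4$-freeness then pin down common neighbors uniquely, and suitable combinations of these constraints produce a closed odd walk of length strictly less than $q$ in $H$---contradicting the odd girth assumption. Minimality is established vertex by vertex: for each $v$ one exhibits an $H$-coloring of $G_n^{(q)}-v$ by routing the broken skeleton along a suitable closed walk in $H$; since the twins are interchangeable, only $O(q)$ non-isomorphic cases arise. The forbidden-induced-subgraph bounds follow by inspection: the skeleton has diameter $O(q)$ with only $O(1)$ vertices of degree $\ge 3$, which rules out $S_{2,2,2}$ by branching considerations and $P_{3q-1}$ and $S_{3(q-1)/2,3(q-1)/2,1}$ by direct path-length counts on the cycle plus one twin level.

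The main obstacle is the minimality step. In Theorem~\ref{thm:infinite} one works against the concrete $H=C_5$, and $H$-colorings of $G_n - v$ can be drawn explicitly; here one must produce them uniformly from only the abstract hypotheses on $H$. The key auxiliary lemma I expect to need is that in any $C_4$-free graph $H$ of odd girth $q$, a shortest odd cycle is induced and admits controlled one-step ``detours'' that let one follow closed walks in $H$ of every length $\ge q$ of the desired parity; this mirrors exactly how traversals of the five-cycle are used in the $C_5$ argument, and it should allow the case analysis to be ported to general $H$ essentially verbatim, with the walks of length $q$ playing the role that length-$5$ walks play in the base case.
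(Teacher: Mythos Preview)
Your proposal rests on a construction that cannot do what you need. You describe $G_n^{(q)}$ as a fixed skeleton of size $O(q)$ together with $n$ pairwise non-adjacent twin vertices ``attached at a single location''. But true non-adjacent twins can never appear in a minimal obstruction: if $u$ and $u'$ are non-adjacent with $N(u)=N(u')$, then any $H$-coloring of $G-u$ extends to $G$ by setting $c(u)=c(u')$. Concretely, either your graph with one twin is already non-$H$-colorable, in which case removing a twin from the $n$-twin version (for $n\ge 2$) still leaves a non-$H$-colorable graph and minimality fails; or the one-twin version is $H$-colorable, and then so is every $n$-twin version. Either way you do not get an infinite family of minimal obstructions. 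This is exactly the phenomenon formalised in Lemma~\ref{similarGraphsLemma}, and it means your minimality paragraph cannot be carried out as written.

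The paper's construction is of a very different shape. The graphs $G_{q,p}$ are vertex-transitive circulant-type graphs on $qp-2$ vertices (no twins, no fixed skeleton): every window of $q$ consecutive vertices induces a $C_q$, and the $C_4$-freeness of $H$ is used via a propagation argument to force $h(j)=h(j-q)$ for all $j$, which then conflicts with the single ``wrap-around'' edge. Minimality is a one-line consequence of vertex-transitivity together with an explicit $C_q$-coloring of $G_{q,p}-v$; no detour lemma about $H$ is needed because one only ever maps into a fixed shortest odd cycle of $H$. The forbidden-subgraph bounds are not obtained from a diameter count but from the combinatorial fact that $G_{q,p}$ is $qK_2$-free (Lemma~\ref{lem:qk2-free}); since both $P_{3q-1}$ and $S_{3(q-1)/2,3(q-1)/2,1}$ contain an induced $qK_2$, their exclusion follows immediately, while $S_{2,2,2}$-freeness is handled by a short direct argument. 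If you want to repair your approach, you should abandon the twin idea and look for a family whose size grows with $p$ in a homogeneous way; vertex-transitivity is what makes the minimality step uniform.
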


Note that Theorem~\ref{thm:infinite-gen} gives a bound for every graph $H$ that was discussed in (OR1) and (OR2).
An astute reader might notice that applying Theorem~\ref{thm:infinite-gen} for $H=C_5$, i.e., $q=5$,
yields a family of obstructions that are in particular  $P_{14}$-free and $S_{6,6,1}$-free, which does not match the bounds 
from Theorem~\ref{thm:infinite}.
Indeed, obtaining the refined result from Theorem~\ref{thm:infinite} requires some additional work, which again uses a mixture of combinatorial observations and computer search.

\paragraph{Organization of the paper.}
In Section~\ref{sec:prelim} we introduce some notation and preliminary observations.
In Section~\ref{sec:algo} we explain the algorithm that is later used to generate minimal obstructions.
In Section~\ref{sec:finite} we prove Theorem~\ref{thm:finite-paths}.
In Section~\ref{sec:finiteclaw} we prove Theorem~\ref{thm:finite-claws}.
In Section~\ref{sec:infinite} we provide constructions of infinite families of graphs that are then used to prove Theorems~\ref{thm:infinite} and~\ref{thm:infinite-gen}.
Finally, in the Appendix, we discuss implementation details of our algorithms and how they were tested for correctness.
 
\section{Preliminaries}\label{sec:prelim}
For an integer $n \geq 1$ we denote by $[n]$ the set $\{1,\ldots,n\}$, and by $[n]_0$ the set $[n] \cup \{0\}$.
For a graph $G=(V,E)$ and a vertex set $U \subseteq V$, the graph $G[U]$ denotes the subgraph of $G$ induced by $U$.
The graph $G-U$ denotes $G[V(G) \setminus U]$.
The set $N_G(u)$ denotes the neighborhood of vertex $u$ in the graph $G$.
For $U \subseteq V(G)$ we define $N_G(U)=\bigcup_{u \in U}N_G(u) \setminus U$.
If the graph $G$ is clear from the context, we omit the subscript and write $N(u)$ and $N(U)$.

If there exists an $H$-coloring of $G$, we denote this fact by $G \to H$. 
It is straightforward to verify that if $G \to H$, then $\textsf{odd-girth}(G) \geq \textsf{odd-girth}(H)$. In particular, $K_3$ has no $C_5$-coloring and is actually a minimal obstruction to $C_5$-coloring. Consequently, every other minimal obstruction to $C_5$-coloring is $K_3$-free.

For any two graphs $G$ and $H$ such that $G$ is $H$-colorable, the graph $\hull(G,H)$ denotes the graph with vertex set $V(G)$ and edge set $\{uv : u, v \in V(G)\text{ and for every } H\text{-coloring }c$ $\text{ of }G \text{, we have }c(u)c(v) \in E(H)\}$. Note that $\hull(G,H)$ is a supergraph of $G$ that is $H$-colorable and that for every induced subgraph $G'$ of $G$ we have $E(\hull(G',H)) \subseteq E(\hull(G,H))$. Note that $\hull(G,H)$ might contain some induced subgraphs that do not appear in $G$.

\section{Generating $F$-free minimal obstructions to $H$-coloring}\label{sec:algo}

In this section we describe an algorithm that can be used to generate all $F$-free minimal obstructions to $H$-coloring.
We emphasize that this approach is robust in the sense that it does not assume that $H=C_5$ and $F$ is a path or a subdivided claw, as needed for Theorems~\ref{thm:finite-paths} and~\ref{thm:finite-claws}.

Throughout the section graphs $H$ and $F$ are fixed. We will use the term \emph{minimal obstruction} for \emph{minimal obstruction to $H$-coloring}.
The algorithm takes as an input ``the current candidate graph'' $I$ that it tries to extend to a minimal obstruction by adding a new vertex $x$ and some edges between $x$ and $V(I)$. The precise details are given in Algorithm~\ref{algo:expand} (\lq Expand\rq) and are explained below. In particular, the algorithm can be used to generate all $F$-free minimal obstructions by choosing $I$ as the single-vertex graph. This algorithm is similar to the algorithm for \colo{k} by~\cite{DBLP:journals/jgt/GoedgebeurS18}, but we formulate it in a more general way.
 In case there are infinitely many minimal obstructions, the generation algorithm will never terminate.
 If there are only finitely many minimal obstructions, then the algorithm might still not terminate, since the prunning rules might not be strong enough. 
 However, if the algorithm terminates, it is guaranteed that there are only finitely many minimal obstructions and that the algorithm outputs all of them.

Let us explain the algorithm, see also the pseudocode in Algorithm~\ref{algo:expand}.
It starts from a graph $I$ and recursively expands this graph by adding a vertex and edges between this new vertex and existing vertices in each recursive step.
The expansion is based on expansion rules that aim at reducing the search space while ensuring that no minimal obstructions are lost in this operation.
For example, if an expansion leads to a graph $I'$ that is not $F$-free, the recursion can be stopped, because all further expansions of $I'$ will not be $F$-free either (note that we do not add any edges inside $V(I)$ and that the class of $F$-free graphs is hereditary).
Another way to restrict the search space is based on Lemma~\ref{similarGraphsLemma}.
This lemma and its proof follow Lemma 5 from~\cite{DBLP:journals/jct/ChudnovskyGSZ20} concerning \colo{k}, but generalizing it to \colo{H} required some adjustments.

\begin{lemma}
\label{similarGraphsLemma}
Let $G=(V,E)$ be a minimal obstruction to $H$-coloring and let $U$ and $W$ be two non-empty disjoint vertex subsets of $G$.
Let $J := \hull(G-U,H)$.
If there exists a homomorphism $\phi$ from $G[U]$ to $J[W]$, then there exists a vertex $u \in U$ for which $N_G(u) \setminus U \nsubseteq N_J(\phi(u))$.
\end{lemma}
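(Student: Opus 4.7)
The plan is to prove the contrapositive: assuming that $N_G(u)\setminus U \subseteq N_J(\phi(u))$ for \emph{every} $u\in U$, I will construct an $H$-coloring of $G$, contradicting the assumption that $G$ is a minimal obstruction. The idea is to colour the vertices of $U$ by "copying" the colour of their image under $\phi$, and the hypothesis on neighbourhoods is precisely what makes this well-defined.

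More concretely, first I observe that because $U\neq\emptyset$, the graph $G-U$ is a proper induced subgraph of $G$, so by minimality it is $H$-colourable. By the definition of $\hull$ recalled in Section~\ref{sec:prelim}, any $H$-colouring of $G-U$ is also an $H$-colouring of $J=\hull(G-U,H)$; fix such a colouring $c\colon V(G)\setminus U\to V(H)$. Then define $c'\colon V(G)\to V(H)$ by $c'(v)=c(v)$ for $v\in V(G)\setminus U$ and $c'(u)=c(\phi(u))$ for $u\in U$; note this is well-defined since $\phi(u)\in W\subseteq V(G)\setminus U$.

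The verification that $c'$ is an $H$-colouring of $G$ is a case analysis on an edge $xy\in E(G)$. If both endpoints lie in $V(G)\setminus U$, the edge is preserved because $c$ itself is an $H$-colouring. If both endpoints lie in $U$, then $xy\in E(G[U])$, so $\phi(x)\phi(y)\in E(J[W])\subseteq E(J)$ by the homomorphism property of $\phi$, and $c(\phi(x))c(\phi(y))\in E(H)$ since $c$ is an $H$-colouring of $J$. The crucial case is the crossing edges: if $x\in U$ and $y\in V(G)\setminus U$, then $y\in N_G(x)\setminus U$, and by our contrapositive hypothesis $y\in N_J(\phi(x))$, i.e.\ $\phi(x)y\in E(J)$; again applying that $c$ is an $H$-colouring of $J$ gives $c(\phi(x))c(y)\in E(H)$. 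Hence $c'$ is an $H$-colouring of $G$, contradicting that $G$ is a minimal obstruction.

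There is no real obstacle here; the lemma is essentially a structural unpacking, and the only subtlety worth double-checking is that one is entitled to evaluate $c$ on $\phi(u)$ (which requires $\phi(u)\notin U$, guaranteed by $W\cap U=\emptyset$) and that edges inside $U$ are handled by the homomorphism $\phi$ landing in $J[W]$, not merely in $G[W]$ — the latter would not suffice, since the $H$-colouring $c$ of $G-U$ need only respect the edges of $J$. Once these two points are observed, the three-case verification is immediate.
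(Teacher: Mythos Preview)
Your proof is correct and follows essentially the same approach as the paper's: assume the neighbourhood containment for all $u\in U$, take an $H$-colouring $c$ of $J$, extend it to $U$ via $u\mapsto c(\phi(u))$, and check edges by the same three-way case split. You are in fact slightly more explicit than the paper in justifying why $J$ is $H$-colourable (invoking $U\neq\emptyset$ and minimality of $G$), which the paper leaves implicit; your closing remark about $J[W]$ versus $G[W]$ has the direction a bit muddled (landing in $G[W]$ would be a \emph{stronger} hypothesis and would certainly suffice), but this is only an aside and does not affect the argument.
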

\begin{proof}
Suppose that $N_G(u)\setminus U \subseteq N_J(\phi(u))$ for all $u \in U$.
Recall that $J$ admits an $H$-coloring, fix one and call it $c$.
Hence, for each $u \in U$ and each $v \in N_J(\phi(u))$ there is an edge $c(\phi(u))c(v) \in E(H)$.
Extend $c$ so that its domain is $V(G)$ by mapping any $u \in U$ to $c(\phi(u))$.
We now show that $c$ is an $H$-coloring of $G$. For contradiction, suppose there is an edge $uv \in V(G)$,
so that $c(u)c(v) \notin E(H)$. By symmetry, assume that $u \in U$, as $c$ restricted to $J$ is an $H$-coloring.

If $v \in U$, then $c(u)c(v) = c(\phi(u))c(\phi(v)) \in E(H)$ as, again, $c$ restricted to $J$ is an $H$-coloring.
Thus assume that $v \in N_G(u)\setminus U$.
Since $N_G(u)\setminus U \subseteq N_J(\phi(u))$, we obtain $c(u)c(v) = c(\phi(u))c(v) \in E(H)$.
This is a contradiction that $G$ is an obstruction to $H$-coloring.
\end{proof}

As isomorphism is a special type of a homomorphism, Lemma~\ref{similarGraphsLemma} immediately yields the following corollary.

\begin{corollary}\label{cor:comparableGraphs}
Let $G$ be an $H$-colorable graph that is an induced subgraph of a minimal obstruction $G'$.
Let $U, W \subseteq V(G)$ be two non-empty disjoint vertex subsets and let $J := \hull(G-U,H)$.
If there exists an isomorphism  $\phi : G[U] \rightarrow J[W]$ such that $N_G(u) \setminus U \subseteq N_J(\phi(u))$ for all $u \in U$, then there exists a vertex $x \in V(G') \setminus V(G)$ such that $x$ is adjacent to some vertex $u \in U$, but $x$ is not adjacent to $\phi(u)$ in $\hull(G'-U,H)$ (and thus also not adjacent to $\phi(u)$ in $\hull(G'[V(G) \cup \{x\} \setminus U],H)$).
\end{corollary}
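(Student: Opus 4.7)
The plan is to obtain the corollary by directly applying Lemma \ref{similarGraphsLemma} to the minimal obstruction $G'$ (with the same sets $U$ and $W$), and then to use monotonicity of the $\hull$ operator to transfer information between $G$ and $G'$. Set $J' := \hull(G'-U,H)$. The corollary's conclusion is the existence of an offending neighbor of some $u \in U$ in $G'$, which is exactly what the lemma produces; the two tasks are then (a) to verify the lemma's hypothesis for $G'$, and (b) to force the offending vertex to lie outside $V(G)$.

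For (a), I would note that since $U \subseteq V(G)$, the induced subgraphs $G[U]$ and $G'[U]$ coincide. Moreover, $G-U$ is an induced subgraph of $G'-U$, so by the monotonicity of the hull recalled in Section~\ref{sec:prelim} we have $E(J) \subseteq E(J')$ on the common vertex set $V(G)\setminus U$. Therefore the given isomorphism $\phi : G[U] \to J[W]$, viewed as a map $U \to W \subseteq V(G)\setminus U \subseteq V(J')$, is still a homomorphism $G'[U] \to J'[W]$, because every edge of $J[W]$ remains an edge of $J'[W]$. Lemma \ref{similarGraphsLemma} then yields a vertex $u \in U$ and a vertex $x \in N_{G'}(u) \setminus U$ with $x \notin N_{J'}(\phi(u))$, which is precisely the non-adjacency $x \not\sim \phi(u)$ in $\hull(G'-U,H)$ asserted by the corollary.

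For (b), I would argue that this $x$ cannot belong to $V(G)$. Indeed, if $x \in V(G)$, then $x \in N_G(u)\setminus U$ because $G$ is an induced subgraph of $G'$ and $u \in V(G)$, so by hypothesis $x \in N_J(\phi(u))$, i.e., $\phi(u)x \in E(J)$. Using $E(J) \subseteq E(J')$ once more, this gives $x \in N_{J'}(\phi(u))$, contradicting the conclusion of the lemma. Hence $x \in V(G')\setminus V(G)$, as desired. The parenthetical strengthening is then immediate: $G'[V(G) \cup \{x\} \setminus U]$ is an induced subgraph of $G'-U$, so another application of monotonicity gives $E(\hull(G'[V(G)\cup\{x\}\setminus U],H)) \subseteq E(J')$, and a non-edge in the larger hull is a non-edge in the smaller one.

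The argument is essentially bookkeeping on top of Lemma \ref{similarGraphsLemma}; the only subtle point, and arguably the main thing to check carefully, is the monotonicity chain $E(J) \subseteq E(J')$ and its use in both directions — first to lift the homomorphism $\phi$ from $J$ to $J'$, and then to show that an edge of $J'$ incident to $\phi(u)$ must already have been an edge of $J$ when restricted to $V(G)\setminus U$. Both uses are legitimate because they only invoke monotonicity on induced subgraphs, which is already recorded in Section~\ref{sec:prelim}.
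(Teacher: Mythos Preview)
Your proof is correct and follows exactly the route the paper indicates: the paper simply states that the corollary follows immediately from Lemma~\ref{similarGraphsLemma} because an isomorphism is in particular a homomorphism, and you have spelled out the bookkeeping (apply the lemma to $G'$, use hull monotonicity to transfer $\phi$, and rule out $x\in V(G)$). One small expository slip worth fixing: in your closing paragraph you describe the second use of monotonicity as showing that ``an edge of $J'$ incident to $\phi(u)$ must already have been an edge of $J$'', which would be the wrong direction; your actual argument in~(b) is the correct one, namely that the hypothesis gives $\phi(u)x\in E(J)$, and $E(J)\subseteq E(J')$ then forces $\phi(u)x\in E(J')$, contradicting $x\notin N_{J'}(\phi(u))$.
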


Actually, we will only use the restricted version of Corollary~\ref{cor:comparableGraphs}. In what follows we use the notation and assumptions of the Corollary.
In case that $|U|=|W|=1$, say $U = \{u\}$ and $W = \{w\}$, we call the pair $(u,w)$ \textit{comparable vertices}.
In case that $G[U]$ and $J[W]$ are both isomorphic to $K_2$ and, say, $U = \{u,u'\}$ and $W = \{w,w'\}$,
we call the pair $(uu',ww')$ \textit{comparable edges}.
The algorithm concentrates on finding comparable vertices and edges for computational reasons.

\begin{algorithm}[ht!]
\caption{Expand}\label{algo:expand}
\DontPrintSemicolon
\LinesNumbered
\SetKwInOut{Input}{Input}
\SetKwInOut{Output}{Output}
\SetKwInOut{Constants}{Constants}
\SetKw{KwAnd}{and}
\SetKw{Print}{output}
\SetKwFunction{AlgName}{Expand}

\Constants{target graph $H$, forbidden graph $F$}
\Input{current graph $I$}
\Output{exhaustive list of $F$-free minimal obstructions to $H$-coloring}
\BlankLine            
  		\If{$I$ is $F$-free \KwAnd not generated before}
            {\label{line:isocheck}
			\If{$I$ is not $H$-colorable}
                {
    				\lIf{$I$ is a minimal obstruction to $H$-coloring}
                        {       
					   \Print $I$\label{line:k-critical}
                        }
                }
    		\Else
                {
        	       \If{$I$ contains comparable vertices $(u,v)$}
                      {
    		              \ForEach{graph $I'$ obtained from $I$ by adding a new vertex $x$ and edges between $x$ and vertices in $V(I)$ in all possible ways, such that $ux \in E(I')$, but $vx \notin E(\hull(I'-u,H))$}
                            {
                                \AlgName{$I'$}\label{line:dominatedvertex}
                            }
    		         }	
    		       \ElseIf{$I$ contains comparable edges $(uv,u'v')$}
                      {
                            \ForEach{graph $I'$ obtained from $I$ by adding a new vertex $x$ and edges between $x$ and vertices in $V(I)$ in all possible ways, such that $rx \in E(I')$, but $rx \notin E(\hull(I'-\{u,v\},H)))$ for some $r \in \{u,v\}$}
                            {
    			                 \AlgName{$I'$}\label{line:dominatededge}	
                            }
                      }
    		      \Else
                    {
    		              \ForEach{graph $I'$ obtained from $I$ by adding a new vertex $x$ and edges between $x$ and vertices in $V(I)$ in all possible ways}
                            {
    			             \AlgName{$I'$}
                            }
                    }
               }
               }
\end{algorithm}


We refer the interested reader to the Appendix for additional details about the efficient implementation of this algorithm, independent correctness verifications and sanity checks.

We conclude this section by giving some intuition about when the algorithm can be expected to terminate. In general, this is a really difficult question to answer as this is affected by both the choice of $H$, $F$ and $I$. If the current graph is not $F$-free, the algorithm will not expand it. Therefore, if the start graph $I$ contains a large induced subgraph of $F$ as an induced subgraph, this heavily restricts how $I$ can be expanded. Moreover, the algorithm will also not expand the current graph if it is not $H$-colorable. Therefore, the set of all $H$-colorings of the start graph $I$ seems important, since it restricts how $I$ can be expanded.

\section{Minimal obstructions to $C_5$-coloring with no long paths}\label{sec:finite}

In this section we still only discuss $C_5$-colorings, thus we will keep writing \emph{minimal obstructions} for \emph{minimal obstructions for $C_5$-coloring}.

The algorithm from Section~\ref{sec:algo} was implemented for $H=C_5$ (the source code is made publicly available at~\cite{gitrepo}). We used the algorithm, combined with some purely combinatorial observations, to generate an exhaustive list of $P_t$-free minimal obstructions, where $t \leq 8$; see also Figure~\ref{fig:p8FreeObstructions} and Table~\ref{tab:countsOfObstructions}.
The minimal obstructions can also be obtained from the database of interesting graphs at the \textit{House of Graphs}~\cite{DBLP:journals/dam/CoolsaetDG23} by searching for the keywords ``minimal obstruction to C5-coloring''.

\begin{table}[h!] \centering
	\begin{threeparttable}
		\begin{tabular}{cccccccccccccccccccccc} \\
			\hline
			\noalign{\smallskip}
			$n$\textbackslash$t$ & 6 & 7 & 8\\
			\noalign{\smallskip}
			\hline
			\noalign{\smallskip}
			\multicolumn{1}{c}{3} & 1 & 1 & 1\\
			\multicolumn{1}{c}{4} & 0 & 0 & 0\\
			\multicolumn{1}{c}{5} & 0 & 0 & 0\\
			\multicolumn{1}{c}{6} & 0 & 0 & 0\\
			\multicolumn{1}{c}{7} & 0 & 0 & 0\\
			\multicolumn{1}{c}{8} & 3 & 4 & 4\\
			\multicolumn{1}{c}{9} & 0 & 0 & 1\\
			\multicolumn{1}{c}{10} & 0 & 0 & 1\\
			\multicolumn{1}{c}{11} & 0 & 0 & 0\\
			\multicolumn{1}{c}{12} & 0 & 0 & 0\\
			\multicolumn{1}{c}{13} & 0 & 1 & 12\\
			\hline
			\multicolumn{1}{c}{Total} & 4 & 6 & 19\\
			\hline
		\end{tabular}
	\end{threeparttable}
	\caption{The number of $P_t$-free minimal obstructions for $C_5$-coloring.}	
		\label{tab:countsOfObstructions}
\end{table}

\subsection{$P_t$-free minimal obstructions for $t \in \{6,7\}$}
As a warm-up, let us reprove the result of Kami\'{n}ski and Pstrucha~\cite{DBLP:journals/dam/KaminskiP19} (in a slightly stronger form, as they did not provide the explicit list of minimal obstructions).
It will also serve as a demonstration of the way how Algorithm~\ref{algo:expand} is intended to be used. 

An exhaustive list for $t \leq 6$ can be obtained by running the algorithm from Section~\ref{sec:algo} with parameters $(I=K_1, H=C_5, F=P_6)$.\footnote{Let us remark that it is a simple exercise to find this list by hand.} This leads to the following observation.

\begin{observation}
There are four minimal obstructions for $C_5$-coloring among $P_6$-free graphs.
All of these obstructions, except for the triangle $K_3$, are $P_6$-free and not $P_5$-free.
\end{observation}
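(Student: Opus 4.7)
\emph{Proof plan.} The plan is to run Algorithm~\ref{algo:expand} with starting graph $I=K_1$, target $H=C_5$, and forbidden graph $F=P_6$, and to support this algorithmic enumeration with a short structural argument that bounds the search. First, $K_3$ is immediately a minimal obstruction: $C_5$ is triangle-free, yet $K_3-v$ is an edge, which maps to any edge of $C_5$. Any other minimal obstruction $G$ is therefore $K_3$-free.

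Next, I would observe that a $P_6$-free $K_3$-free graph contains no induced odd cycle other than $C_5$, since $C_k$ for $k\geq 7$ contains an induced $P_6$. Thus a $P_6$-free $K_3$-free graph that fails to admit a $C_5$-coloring is necessarily non-bipartite and must contain an induced $C_5$, say on vertices $v_1v_2v_3v_4v_5$. In any $C_5$-coloring this cycle must be mapped bijectively and cyclically onto $V(C_5)$. Every additional vertex $u$ has a $K_3$-free neighborhood on this cycle, i.e.\ an independent set of $C_5$, so $|N(u)\cap\{v_1,\ldots,v_5\}|\leq 2$, and the neighborhood is either empty, a single vertex, or a pair of vertices at distance $2$ along the cycle. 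Combined with $P_6$-freeness, which forbids long induced paths crossing such extensions, this restricts the possible extensions of $G$ to finitely many candidates.

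Algorithm~\ref{algo:expand} systematises this reasoning: starting from $K_1$, it extends one vertex at a time while pruning any branch that contains an induced $P_6$ (by heredity of $P_6$-freeness) or a pair of comparable vertices or comparable edges (by Corollary~\ref{cor:comparableGraphs}). The main obstacle is verifying termination and completeness of this enumeration, which becomes a finite bookkeeping task thanks to the structural bounds above. Its output, recorded in Table~\ref{tab:countsOfObstructions} and visualised in the first row of Figure~\ref{fig:p8FreeObstructions}, consists of exactly four graphs: $K_3$ together with three $8$-vertex obstructions. Each of the three non-triangle obstructions contains an induced $C_5$ and hence an induced $P_5$, so they are not $P_5$-free, while all four are $P_6$-free by construction. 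This yields the claimed count of four and the stated $P_5$/$P_6$ distinction.
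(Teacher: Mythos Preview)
Your proposal is correct and follows essentially the same approach as the paper: both simply invoke Algorithm~\ref{algo:expand} with parameters $(I=K_1,\,H=C_5,\,F=P_6)$ and read off the four obstructions from its terminating output. The structural observations you add (that any non-$K_3$ obstruction is $K_3$-free and must contain an induced $C_5$, with the ensuing neighborhood analysis) are correct and in the spirit of the paper's footnote that this list can be found by hand, though neither you nor the paper turns this into a self-contained proof of termination.
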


Unfortunately, the same simple strategy already fails for $t=7$.
Indeed, the algorithm as presented in Section~\ref{sec:algo} does not terminate after running for several hours after calling it with parameters $(I=K_1, H=C_5, F=P_7)$. However, with relatively small adjustments, the algorithm is able to produce an exhaustive list of minimal obstructions in a few seconds. 

The first adjustment has to do with the order in which the expansion rules are used. Note that the order in which the algorithm checks whether it can find comparable vertices and comparable edges does not affect the correctness of the algorithm, but it might affect whether the algorithm terminates or not. 
For example, it could happen that by expanding in order to get rid of a pair of comparable vertices, a new pair of comparable vertices is introduced (and this could continue indefinitely). By applying a different expansion rule first, this can be avoided sometimes. For $F=P_7$, the algorithm was run by first looking for comparable vertices and then for comparable edges, except when $|V(I)|=10$, in which case the algorithm first looks for comparable edges and then for comparable vertices.

The second adjustment is based on the following observation.

\begin{observation}\label{obs:p7start}
Every $P_7$-free minimal obstruction to $C_5$-coloring, except for the graph $K_3$, contains the cycle $C_5$ or the cycle $C_7$ as an induced subgraph.
\end{observation}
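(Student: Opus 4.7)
The plan is to argue via the length of a shortest odd cycle in $G$. Since $G$ is a minimal obstruction to $C_5$-coloring and $G \neq K_3$, the preliminary remarks in Section~\ref{sec:prelim} tell us that $G$ must be $K_3$-free. Moreover, $G$ cannot be bipartite, since any bipartite graph admits a $C_5$-coloring (for instance by mapping the two color classes to the endpoints of a single edge of $C_5$). Therefore $G$ contains at least one odd cycle, and we may fix a shortest one, call it $C$, of length $\ell$.

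Next I would verify that $C$ is an induced subgraph of $G$ by the standard chord argument: any chord of $C$ would split $C$ into two cycles whose lengths sum to $\ell+2$, and since $\ell$ is odd exactly one of these two subcycles is odd and has length strictly less than $\ell$, contradicting the minimal choice of $C$. Because $G$ is $K_3$-free, the length $\ell$ is an odd integer with $\ell \geq 5$.

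To finish, I use the fact that $P_t$-freeness of $G$ forces $\ell$ to be small. Indeed, since $C$ is induced in $G$, any path of $\ell-1$ consecutive vertices along $C$ is an induced path in $G$. Thus if $\ell \geq 9$, the cycle $C$ contains an induced $P_8$, and in particular an induced $P_7$, contradicting the assumption that $G$ is $P_7$-free. It follows that $\ell \in \{5,7\}$, and therefore $G$ contains $C_5$ or $C_7$ as an induced subgraph, as claimed. I do not expect any real obstacle here; the observation is a direct consequence of excluding $K_3$ and $P_7$ together with the fact that shortest odd cycles are induced.
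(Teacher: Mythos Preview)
Your proposal is correct and follows essentially the same route as the paper's own proof: rule out bipartiteness, take a shortest odd cycle, observe it is induced and has length at least $5$ by $K_3$-freeness, and then use $P_7$-freeness to cap its length at $7$. The only cosmetic difference is that the paper phrases the upper bound as ``any induced cycle on at least $8$ vertices contains $P_7$'' rather than going via $P_8$, but since you are restricting to odd lengths anyway this makes no practical difference.
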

\begin{proof}
Let $G$ be $P_7$-free minimal obstruction such that $G \neq K_3$.
Note that $G$ is not bipartite, as every $K_2$-coloring (i.e., 2-coloring) of $G$ is also a $C_5$-coloring of $G$.
Hence, $G$ contains an odd cycle as an induced subgraph. Let $C$ be the shortest such cycle, and let $k = |V(C)|$.
Note that $k > 3$. Indeed, $C_3 = K_3$ is not $C_5$-colorable, so by minimality, $G$ cannot contain it as a proper subgraph, and $G$ is not a triangle itself by assumption.
On the other hand, any induced cycle on at least $8$ vertices contains $P_7$ as an induced subgraph.
Thus $k \in \{5,7\}$.
\end{proof}

By Observation~\ref{obs:p7start}, each $P_7$-free minimal obstruction belongs to at least one of the following three subsets:
\begin{enumerate}[(i)]
    \item the triangle $K_3$,
    \item minimal obstructions that are $P_7$-free but contain $C_5$ as an induced subgraph,
    \item minimal obstructions that are $P_7$-free but contain $C_7$ as an induced subgraph.
\end{enumerate}
Thus, running the algorithm for $(I=C_5,H=C_5,F=P_7)$ and $(I=C_7,H=C_5,F=P_7)$, respectively, we can generate the families (ii) and (iii). This yields the following result; recall that the finiteness of the family of minimal obstructions was already shown by Kami\'{n}ski and Pstrucha~\cite{DBLP:journals/dam/KaminskiP19}.

\begin{observation}
There are six $P_7$-free minimal obstructions to $C_5$-coloring.
Two of these obstructions are $P_7$-free, but not $P_6$-free.
\end{observation}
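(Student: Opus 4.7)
The plan is to partition the $P_7$-free minimal obstructions into three disjoint classes via Observation~\ref{obs:p7start}: the triangle $K_3$, obstructions that contain an induced $C_5$, and obstructions that contain an induced $C_7$ but no induced $C_5$. The first case is immediate, so the bulk of the proof reduces to producing exhaustive finite lists for the latter two classes and then verifying that their union (together with $K_3$) has size six, of which exactly two are not $P_6$-free.

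To enumerate the $C_5$-containing class, I would invoke Algorithm~\ref{algo:expand} with parameters $(I=C_5, H=C_5, F=P_7)$, and for the $C_7$-containing class I would invoke it with $(I=C_7, H=C_5, F=P_7)$. Each run starts from the respective induced subgraph and adds one vertex at a time together with every admissible adjacency pattern to the previous graph, pruning branches where $F$-freeness fails, where the current graph is already $C_5$-colorable with a minimal non-colorable extension ruled out, or where Corollary~\ref{cor:comparableGraphs} applies (comparable vertices or comparable edges force the new vertex to break the would-be isomorphism). The correctness of this reduction is guaranteed by Observation~\ref{obs:p7start}, which ensures that every non-trivial $P_7$-free minimal obstruction is reachable from $C_5$ or $C_7$, and by the soundness of Algorithm~\ref{algo:expand}, which never discards a genuine minimal obstruction.

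After collecting the outputs of both runs, I would merge them, eliminate the (possibly small) overlap via isomorphism testing, adjoin $K_3$, and confirm that the final list has cardinality six. Matching against the four $P_6$-free obstructions already identified in the warm-up immediately pinpoints the two obstructions that are $P_7$-free but not $P_6$-free, completing the count claimed in the observation.

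The main obstacle is not conceptual but algorithmic: ensuring termination of the two invocations. As noted in the text, running the algorithm from $I=K_1$ does not finish in reasonable time, and the expansion order between the comparable-vertex and comparable-edge rules matters — a pair of comparable vertices may be created by the very expansion that removes another pair, leading to nontermination. I therefore would default to expanding via comparable vertices first and via comparable edges second, but switch this priority once $|V(I)|=10$, as is done in the paper. Once both invocations halt, the enumeration is complete and the observation follows.
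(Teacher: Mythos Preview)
Your proposal is correct and follows essentially the same approach as the paper: partition via Observation~\ref{obs:p7start}, run Algorithm~\ref{algo:expand} with $(I=C_5,H=C_5,F=P_7)$ and $(I=C_7,H=C_5,F=P_7)$, apply the rule-order swap at $|V(I)|=10$ to ensure termination, and combine the outputs with $K_3$. The only cosmetic difference is that you make the three classes disjoint up front (requiring ``no induced $C_5$'' in the $C_7$ case) and then deduplicate explicitly, whereas the paper simply runs both calls and lets the ``not generated before'' check in line~\ref{line:isocheck} absorb any overlap.
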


\subsection{$P_8$-free minimal obstructions}

This section is devoted to the proof of Theorem~\ref{thm:finite-paths}, which we restate below (see also Figure~\ref{fig:p8FreeObstructions}).

\thmfinitepaths*

Similarly to the $P_7$-free case, the proof uses the algorithm from Section~\ref{sec:algo},
but this time it requires a lot more purely combinatorial insight.
For $i \in [4]$, let $Q_i$ be the graph obtained from two disjoint copies of $C_5$ by identifying $i$ pairs of consecutive corresponding vertices of the cycles (see Figure~\ref{fig:qs}).

Let $G$ be a $P_8$-free minimal obstruction; we aim to understand the structure of $G$ and show that is must be one of 19 graphs in Figure~\ref{fig:p8FreeObstructions}.
We split the reasoning into two cases: first, we assume that $G$ contains $Q_i$, for some $i \in [4]$, as an induced subgraph.
Then, in the second case, we assume that $G$ is $\{Q_1,Q_2,Q_3,Q_4\}$-free.

\subsubsection*{Case 1: $G$ contains $Q_i$ for some $i \in [4]$ as an induced subgraph}

We deal with this case using the algorithm from Section~\ref{sec:algo}.
The algorithm terminates in a few minutes when it is called with the parameters $(I=Q_i,H=C_5,F=P_8)$ for all $i \in [4]$.
All minimal obstructions obtained this way are listed in Figure~\ref{fig:p8FreeObstructions}.

\subsubsection*{Case 2: $G$ does not contain $Q_i$ for any $i \in [4]$}

Note that as  $K_3$ is a minimal obstruction, from now on we will assume that $G$ is $\{P_8,K_3,Q_1,Q_2,Q_3,Q_4\}$-free.
We aim to show that all such graphs are $C_5$-colorable, i.e., the list obtained in Case 1, plus the triangle, is exhaustive.

\begin{lemma}
Let $G$ be a $\{P_8,K_3,Q_1,Q_2,Q_3,Q_4\}$-free graph. Then $G$ is $C_5$-colorable.
\end{lemma}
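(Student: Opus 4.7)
The plan is a structural case analysis on the shortest induced odd cycle of $G$. If $G$ is bipartite, mapping each part of $V(G)$ to an edge of $C_5$ gives a valid $C_5$-coloring. Otherwise, triangle-freeness and $P_8$-freeness force the shortest induced odd cycle of $G$ to have length $5$ or $7$, because any induced $C_k$ with $k\ge 9$ contains an induced $P_8$.

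Assume first that $G$ contains an induced $C_5=v_0v_1v_2v_3v_4$. Using $Q_4$-freeness, I would first show that every $x\in V(G)\setminus V(C)$ has $|N(x)\cap V(C)|\le 1$: if $x$ had two neighbors in $V(C)$, then by $K_3$-freeness they are non-adjacent in $C$, say $\{v_0,v_2\}$, and then $xv_0v_4v_3v_2$ is an induced $C_5$ sharing the induced $P_4$ $v_0v_4v_3v_2$ with $C$, yielding an induced $Q_4$. This partitions $V(G)\setminus V(C)=A_0\cup\cdots\cup A_4\cup B$, where $A_i=\{x:N(x)\cap V(C)=\{v_i\}\}$ and $B$ is the set of vertices with no neighbor in $V(C)$. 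Analogous arguments give: (i) no edges between $A_i$ and $A_{i+2}$ by $Q_3$-freeness; (ii) each $A_i$ is independent by $K_3$-freeness; and (iii) no $z\in B$ has neighbors in both $A_i$ and $A_{i+1}$ --- indeed, for $x\in A_i$, $y\in A_{i+1}$ with $zx,zy\in E(G)$, the cycle $zxv_iv_{i+1}y$ is an induced $C_5$ giving $Q_2$ unless $xy\in E(G)$, in which case $\{x,y,z\}$ forms a triangle. Finally, using $P_8$-freeness, any vertex $y_0$ at distance $k$ from $V(C)$ yields the induced path $v_{i-3}v_{i-2}v_{i-1}v_iy_{k-1}\cdots y_0$ on $k+4$ vertices (the $y$-segment is induced as a shortest path), forcing $k\le 3$.

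With this structure in hand, the coloring $c(v_i)=i$ and $c(x)=i+1\pmod{5}$ for every $x\in A_i$ is a valid $C_5$-coloring of $V(C)\cup\bigcup_iA_i$: edges $A_iA_{i+1}$ yield the $C_5$-edge $(i+1)(i+2)$, and edges $A_iA_{i-1}$ yield $(i+1)i$. For each $z\in B$, its $A$-neighbors lie in at most two classes $A_i$ or $A_i\cup A_{i+2}$ (by (iii)), so $c(z)$ is constrained to $\{i,i+2\}$ or uniquely forced to $i+2$ respectively --- never empty. It remains to extend the coloring to the rest of $B$, which I would do by induction on $|V(G)|$ applied to $G[B]$ (which inherits all forbidden induced subgraphs) and then carefully merging with the forced $A$-constraints. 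If instead $G$ is $C_5$-free but contains an induced $C_7=w_0\ldots w_6$, then $Q_i$-freeness is automatic, while $C_5$- and $K_3$-freeness still force every $x\notin V(C)$ to have at most one neighbor in $V(C)$. The map $(c(w_0),\ldots,c(w_6))=(0,1,2,3,4,0,4)$ realizes $C_7\to C_5$, and the same $P_8$-based distance bound allows this coloring to be extended to $V(G)$ along analogous lines.

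\textbf{Main obstacle.} The hardest step is extending the coloring from $V(C)\cup\bigcup_iA_i$ to $B$ while respecting the $A$-forced constraints. An inductive $C_5$-coloring of $G[B]$ exists but need not agree with the values forced by $A$-neighbors on boundary vertices. I expect $P_8$-freeness (through the radius bound) together with $Q_1$-freeness (ruling out nontrivial overlaps of induced $C_5$'s in $B$) to provide enough rigidity for a careful component-by-component merging argument.
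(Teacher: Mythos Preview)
Your opening structural observations for the $C_5$ case (partition into $A_i$'s, no $A_iA_{i+2}$ edges, each second-neighbor of $C$ touching at most $A_i\cup A_{i+2}$, distance at most $3$ from $C$) are correct and match the paper. However, there are two genuine problems.

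\medskip
\textbf{The $C_7$ case is wrong as stated.} You assert that ``$C_5$- and $K_3$-freeness still force every $x\notin V(C)$ to have at most one neighbor in $V(C)$''. This is false: a vertex $x$ adjacent to $w_{i-1}$ and $w_{i+1}$ (and nothing else on $C$) creates only an induced $C_4$ on $\{x,w_{i-1},w_i,w_{i+1}\}$; no triangle and no $C_5$ appears. The paper's proof explicitly allows such vertices (the sets $A_{i-1,i+1}$) and the analysis of the $C_7$ case depends on handling them, including a further split of the single-neighbor sets $A_i$ into $A_i^+$ and $A_i^-$ according to which side of the cycle their outside neighbors lie. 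Your sketch for this case would have to be redone from scratch.

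\medskip
\textbf{The extension to $B$ in the $C_5$ case is not a proof.} You acknowledge this as the main obstacle, but the proposed fix---color $G[B]$ by induction and then ``merge''---does not work without substantial new ideas: an arbitrary $C_5$-coloring of $G[B]$ need not respect the list constraints coming from $A$, and there is no automorphism of $C_5$ available to fix all boundary conflicts simultaneously. In the paper this step is where the real content lives. One first separates the distance-$2$ layer $B$ from the distance-$3$ layer $T$, shows $T$ is independent and that each $t\in T$ sees at most one $B$-class (so $T$ extends for free once $B$ is colored class-by-class). The distance-$2$ layer is partitioned into sets $B_i$ and $B_{i-1,i+1}$; here $Q_1$-freeness is used for the first and only time, to show each $B_i\cup B_{i,i-2}\cup B_{i,i+2}$ is independent (two adjacent vertices there would produce two induced $C_5$'s sharing a single vertex $c_i$). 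Even so, edges between different $B$-classes can occur, and the paper needs three explicit subcases---depending on whether there is an edge between some $B_i$ and $B_{i+2}$, or between some $B_j$ and $B_{j\pm1,j\mp2}$, or neither---each of which forces certain $A_i$'s (and hence $B$-classes) to be empty before an explicit coloring can be written down. None of this structure is visible from your inductive approach, and in particular you never invoke $Q_1$-freeness, which is essential.
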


\begin{proof}
Without loss of generality we can assume that $G$ is connected, as otherwise we can treat each component separately.
Clearly, if $G$ is bipartite, then $G \to C_5$, thus we can assume that $G$ contains an odd cycle.
Consider a shortest such cycle in $G$; note that it is induced.
As $G$ is $\{P_8,K_3\}$-free, we observe that this cycle has either five or seven vertices.

Thus, we distinguish two cases: either $G$ contains an induced $C_5$ or $G$ contains an induced $C_7$ and is $C_5$-free.

\paragraph{Case 2A: $G$ has an induced $C_5$.}
Fix an induced copy of $C_5$ in $G$ and denote its consecutive vertices by $c_1,\ldots,c_5$. All computations on indices are performed modulo 5.
We also denote $C = \{c_1,\ldots,c_5\}$.
Let $A_i=N(c_i) \setminus \{c_{i-1},c_{i+1}\}$, and let $A=N(C) = \bigcup_{i = 1}^5 A_i$.

\begin{claim}\label{cla:a-neighbor}
For each $v \in A$ there exists a unique $i \in [5]$ such that $v \in A_i$.
Moreover, for every $i \in [5]$ and every $v \in A_i$, we have that $N(v) \cap A \subseteq A_{i-1} \cup A_{i+1}$. 
\end{claim}
\begin{claimproof}
First, we observe that if $v \in A_i \cap A_{i+1}$ for some $i \in [5]$, then $v,c_i,c_{i+1}$ is a triangle in $G$, a contradiction. 
Similarly, if $v \in A_i \cap A_{i+2}$, then the set $C \cup \{v\}$ induces a copy of $Q_4$ in $G$, a contradiction.
This proves the first statement of the claim.

For the second statement, consider $v \in A_i$ and $u \in N(v) \cap A$. 
If $u \in A_i$, then $v,u,c_i$ is again a triangle in $G$, a contradiction.
On the other hand, if $u \in A_{i+2}$ (or, by symmetry, $u \in A_{i-2}$), then $C \cup \{v,u\}$ induces a copy of $Q_3$ in $G$, a contradiction.
\end{claimproof}

By the claim above we obtain in particular that $(A_1,\ldots,A_5)$ is a partition of $A$.
Let $B = N(C \cup A)$ and $T = N(C \cup A \cup B)$. In other words, $B$ (resp. $T$) contains vertices at distance exactly 2 (resp. 3) from $C$.

We note that $V(G) = C \cup A \cup B \cup T$. Suppose otherwise. By connectivity of $G$, there is $v_1 \notin C \cup A \cup B \cup T$ with a neighbor $v_2 \in T$.
Let $v_3$ be a neighbor of $v_2$ in $B$, $v_4$ be a neighbor of $v_3$ in $A$, and $c_i$ be the unique neighbor of $v_4$ in $C$.
Then $(v_1,v_2,v_3,v_4,c_i, c_{i+1},c_{i+2},c_{i+3})$ is an induced $P_8$ in $G$, a contradiction.

\begin{claim}\label{cla:t-ind}
The set $T$ is independent.
\end{claim}
\begin{claimproof}
For contradiction assume otherwise, and let $u,v \in T$ be adjacent. By the definition of $T$, there exists $v_1 \in B \cap N(v)$ and $v_2 \in A \cap N(v_1)$.
Let $c_i$ be the unique neighbor of $v_2$ in $C$.
As $G$ is triangle-free, $uv_1 \notin E(G)$. Thus $(u,v,v_1,v_2,c_i,c_{i+1},c_{i+2},c_{i+3})$ is an induced $P_8$, a contradiction.
\end{claimproof}

\begin{claim}
For each $v \in B$ there exists a unique $i \in [5]$ such that $N(v) \cap A \subseteq A_{i-1} \cup A_{i+1}$.
\end{claim}
\begin{claimproof}
Observe that if there exists $i \in [5]$, $u \in A_i \cap N(v)$ and $u' \in A_{i+1} \cap N(v)$, then either $v,u,u'$ is a triangle or $C \cup \{v,u,u'\}$ induces $Q_2$ in $G$, a contradiction. Thus $N(v) \cap A$ intersects at most two sets $A_i, A_j$ for some distinct $i,j \in [5]$, and, moreover, $|i-j| \neq 1$. The claim follows.
\end{claimproof}

For each $i \in [5]$ define $B_i=\{v\in B~|~N(v) \cap A \subseteq A_i\}$ and $B_{i-1,i+1}=B_{i+1,i-1}=\{v\in B~|~N(v) \cap A_{i-1} \neq \emptyset, N(v) \cap A_{i+1}\neq \emptyset\}$. Let $\cB=\{B_i\}_{i \in [5]} \cup \{B_{i-1,i+1}\}_{i \in [5]}$.

\begin{claim}\label{cla:b-b-ind}
For each $i \in [5]$ the set $B_i \cup B_{i,i-2} \cup B_{i,i+2}$ is independent. 
\end{claim}
\begin{claimproof}
If $u,v \in B_i \cup B_{i,i-2} \cup B_{i,i+2}$, then there exist $u'$ and $v'$ in $A_i$ such that $uu',vv' \in E(G)$. If now $uv \in E(G)$, then either $u,v,u'$ is a triangle in $G$ (if $u'=v'$) or $C \cup \{u,v,u',v'\}$ induces a copy of $Q_1$ in $G$, in both cases we reach a contradiction. Thus $B_i \cup B_{i,i-2} \cup B_{i,i+2}$ is an independent set.
\end{claimproof}



\begin{claim}\label{cla:p8-found}
For each $i \in [5]$, if $ub \in V(G)$ is such that $u \in T \cup B$ has no neighbors in $A_{i-2}$ (resp. $A_{i+2}$) and $b \in B_i \cup B_{i,i+2}$ (resp. $b \in B_i \cup B_{i,i-2}$), then $A_{i-2}=\emptyset$ (resp. $A_{i+2}=\emptyset$).
\end{claim}
\begin{claimproof}
Without loss of generality we can assume $i=1$. Clearly, it is enough to prove one statement, the other one will follow by symmetry. Let $a_1 \in A_1$ be a neighbor of $b$.
Observe that if $a_4 \in A_4$ then $(u,b,a_1,c_1,c_2,c_3,c_4,a_4)$ is a $P_8$ in $G$, a contradiction. 
    Indeed, $a_1a_4 \notin E(G)$ by Claim~\ref{cla:a-neighbor}, by assumption $u$ is non-adjacent to $a_4$, and non-adjacent to $a_1$ since $G$ is $K_3$-free. 
    The (non-)existence of all the other edges follows from the definitions of the vertices/the sets they belong to.
\end{claimproof}

\begin{claim}\label{cla:t-neighbors}
Every $t \in T$ has neighbors in at most one of the sets in $\cB$.
\end{claim}
\begin{claimproof}
First, suppose that $t$ has a neighbor $b_1$ in $B_i$ for $i \in [5]$, by symmetry we can assume that $i=1$.
For contradiction suppose that $b'$ is a neighbor of $t$ in $B \setminus B_1$, and let $a_1$ be a neighbor of $b_1$ in $A_1$.
As $G$ is triangle-free, we have $b_1b' \notin E(G)$.
Now if $a_1b' \notin E(G)$, then $(b',t,b_1,a_1,c_1,c_2,c_3,c_4)$ is an induced $P_8$ in $G$.
So assume that $a_1b' \in E(G)$. In particular, $b' \in B_{1,3}$ or $b' \in B_{1,4}$. By symmetry assume the former and let $a_3 \in N(b') \cap A_3$.
Then $(b_1,t,b',a_3,c_3,c_4,c_5,c_1)$ is an induced $P_8$ in $G$. In both cases we reach a contradiction.

So from now on we assume that $t$ has no neighbor in $\bigcup_{i=1}^5 B_i$, but it must have a neighbor, say, $b \in B_{2,5}$.
From Claim~\ref{cla:p8-found} (for $i=5$ and $i=2$) it follows that
$A_3 = A_4 = \emptyset$, so in particular $B_{1,3} \cup B_{2,4} \cup B_{1,4} \cup B_{3,5} = \emptyset$.
This means that $B = B_{2,5} \cup \bigcup_{i=1}^5 B_i$, so indeed $t$ has no neighbors in $B$ outside $B_{2,5}$.
\end{claimproof}

We obtain the following.
\begin{claim}\label{cla:t-homo}
If $h:G - T \to C_5$ is a homomorphism, in which for every $\widetilde{B} \in \cB$ there is $i \in [5]$
such that $h(\widetilde{B}) = i$,
then $h$ can be extended to a homomorphism  from $G$ to $C_5$.
\end{claim}
\begin{claimproof}
Note that by Claim~\ref{cla:t-neighbors} for every $t \in T$ there is $i \in [5]$ such that all neighbors of $t$ are mapped to $i$. Then we can map such $t$ to $i+1$; this will not create any wrongly mapped edge as $T$ is independent by Claim~\ref{cla:t-ind}.
\end{claimproof}

\paragraph{Subcase 2A.1: There exists $i \in [5]$ and $b_i \in B_i, b_{i+2} \in B_{i+2}$ such that $b_ib_{i+2} \in E(G)$.}
By symmetry assume that $i=1$, and for $j \in \{1,3\}$, let $a_j$ be a vertex in $N(b_i) \cap A_i$.
By Claim~\ref{cla:p8-found} (for $b=b_1,u=b_3$ and $i=1$, and for $b=b_3,u=b_1$ and $i=3$) we have $A_4=A_5=\emptyset$. Consequently, $B = B_1 \cup B_2 \cup B_3 \cup B_{1,3}$.

\begin{claim}
    There are no edges between $B_2$ and $B_{1,3} \cup B_1 \cup B_3$.
\end{claim}
\begin{claimproof}
Assume otherwise, and let $b_2b$ be an edge such that $b_2 \in B_2$ and $b \in B_{1,3} \cup B_1 \cup B_3$.
If $b \in B_{1} \cup B_3$, Claim~\ref{cla:p8-found} gives us that $A_3=\emptyset$ or $A_1=\emptyset$, a contradiction.
Thus $b \in B_{1,3}$.
Choose any $a'_3 \in A_3 \cap N(b)$, $a'_1 \in A_1 \cap N(b)$, if possible selecting $a'_1$ and $a'_3$ to be the neighbors of, respectively, $b_1$ and $b_3$. Let $a_2 \in A_2 \cap N(b_2)$.

Assume that $a'_1 \in N(b_1)$ and $a'_3 \in N(b_3)$. If $a'_1a_2 \notin E(G)$ (or, by symmetry, $a'_3a_2 \notin E(G)$) then $(a'_1,b,b_2,a_2,c_2,c_3,c_4,c_5)$ is an induced $P_8$ in $G$, a contradiction. Thus $a'_1a_2,a'_3a_2 \in E(G)$. But now $\{a'_1,b,a'_3,b_3,b_1,a_2\}$ induces $Q_4$ in $G$, a contradiction. Now, if $a'_1 \notin N(b_1)$ or $a'_3 \notin N(b_{3})$, by symmetry we assume that the first one holds. Then $(a'_1,c_1,a_1,b_1,b_3,a_3,c_3,c_4)$ is an induced $P_8$ in $G$ a contradiction.
\end{claimproof}

Now it is straightforward to verify that the function $h : V(G) \setminus T \to [5]$ defined as
\begin{align*}
h(x)=\begin{cases}
i & \textrm{if }x=c_i \textrm{ for }i\in [5] \textrm{ or }x\in B_i \textrm{ for }i=1, \\
i+1 & \textrm{if }x\in A_i \textrm{ for }i\in [3], \\
i+2 & \textrm{if }x\in B_i\textrm{ for }i\in \{2,3\}, \\ 
3 & \textrm{if }x\in B_{1,3},
\end{cases}
\end{align*}
is a homomorphism from $G - T$ to $C_5$. By Claim~\ref{cla:t-homo}, we have $G \to C_5$ (see Figure~\ref{fig:lemma2-case1a} (left)).


\paragraph{Subcase 2A.2: For each $i \in [5]$ the set $B_i \cup B_{i+2}$ is independent, but there exists $j \in [5]$ and $b_j \in B_j, b \in B_{j-1,j+2} \cup B_{j+1,j-2}$ such that $b_jb \in E(G)$. }
By symmetry assume that $j=1$ and $b\in B_{j+1,j-2}=B_{2,4}$.
By Claim~\ref{cla:p8-found} for $i=2$ we have $A_5=\emptyset$. 

\begin{claim}
    $A_3=\emptyset$ and $B_{4,1}=\emptyset$.
\end{claim}
\begin{claimproof}
Assume that there is $a_3 \in A_3$. Note that depending on whether $a_3a_4$ is an edge of $G$ or not, either $(b_{1},b,a_4,a_3,c_3,c_2,c_1,c_5)$ or $(c_1,a_1,b_1,b,a_4,c_4,c_3,a_3)$ is an induced $P_8$ in $G$. Thus $A_3=\emptyset$.

Now assume that there is $b_{4,1} \in B_{4,1}$. Choose any $a'_1 \in A_1 \cap N(b_{4,1})$, $a'_4 \in A_4 \cap N(b_{4,1})$, if possible selecting $a'_1$ and $a'_4$ to be the neighbors of, respectively, $b_1$ and $b$.

If $a'_1 \in N(b_1)$ and $a'_1a_2 \notin E(G)$, then $(c_4,c_3,c_2,a_2,b,b_1,a'_1,b_{1,4})$ induces $P_8$.
If $a'_1 \in N(b_1)$ and $a'_1a_2 \in E(G)$, then either $\{a_1',a_2,b,a'_4,b_{4,1},b_{1}\}$ induces $Q_4$ or $(c_1,c_2,a_2,b,a_4,c_4,$ $a'_4,b_{1,4})$ induces $P_8$, depending on whether $a'_4 \in N(b)$ or not (recall that $bb_{4,1} \notin E(G)$ by Claim~\ref{cla:b-b-ind}).

Thus we can assume that $a'_1 \notin N(b_1)$. Then $(a_1',c_1,a_1,b_1,b,a_4,c_4,c_3)$ induces $P_8$. Since in every case we reach a contradiction, the claim follows.
\end{claimproof}

From the fact that $A_5=A_3=B_{4,1}=\emptyset$ we obtain that $B=B_1 \cup B_2 \cup B_4 \cup B_{2,4}$.
Now it is straightforward to verify that the function $h : V(G) \setminus T \to [5]$ defined as
\begin{align*}
h(x)=\begin{cases}
i & \textrm{if }x=c_i \textrm{ for }i\in [5]  \textrm{ or }x\in B_{i-2,i} \textrm{ for }i=4, \\
i+1 & \textrm{if }x\in A_i \textrm{ for }i\in \{1,2,4\}, \\
i+2 & \textrm{if }x\in B_i\textrm{ for }i\in \{1,2,4\},
\end{cases}
\end{align*}
is a homomorphism from $G - T$ to $C_5$. By Claim~\ref{cla:t-homo}, we have $G \to C_5$ (see Figure~\ref{fig:lemma2-case1a} (right)).

\begin{figure}
    \centering
    \includegraphics[page=1,scale=1]{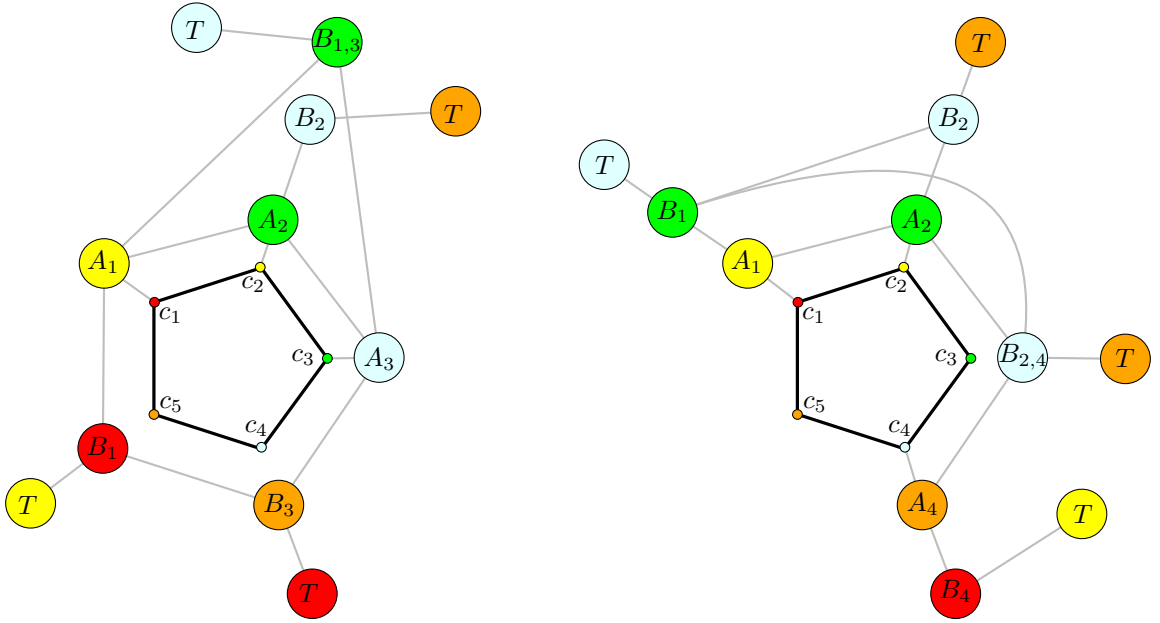}
    \caption{$C_5$-coloring of $G$ defined as in Subcase A1 (left) and Subcase A2 (right). Colors correspond to vertices/sets being mapped to specific vertices of $C_5$. All sets represented by the circles are independent. Two sets are joined by a gray line if there may be edges between them.}
    \label{fig:lemma2-case1a}
\end{figure}

\paragraph{Subcase 2A.3: For each $i \in [5]$ there is no edge between $B_i$ and the set $B_{i-2} \cup B_{i-1,i+2} \cup B_{i+2} \cup B_{i+1,i-2}$.}

Now we define $h: V(G) \setminus T \to [5]$ as follows (see Figure~\ref{fig:lemma2-case1b}).\begin{align*}
h(x)=\begin{cases}
i & \textrm{if }x=c_i \textrm{ or } x \in B_{i}\textrm{ for }i\in [5], \\
i+1 & \textrm{if }x\in A_i\textrm{ or } x \in B_{i-1,i+1} \textrm{ for }i\in [5].
\end{cases}
\end{align*}
Clearly, $h$ is a homomorphism from $G - T$ to $C_5$. Thus, by Claim~\ref{cla:t-homo}, $G \to C_5$. The lemma follows.

\begin{figure}
    \centering
    \includegraphics[page=3,scale=1.4]{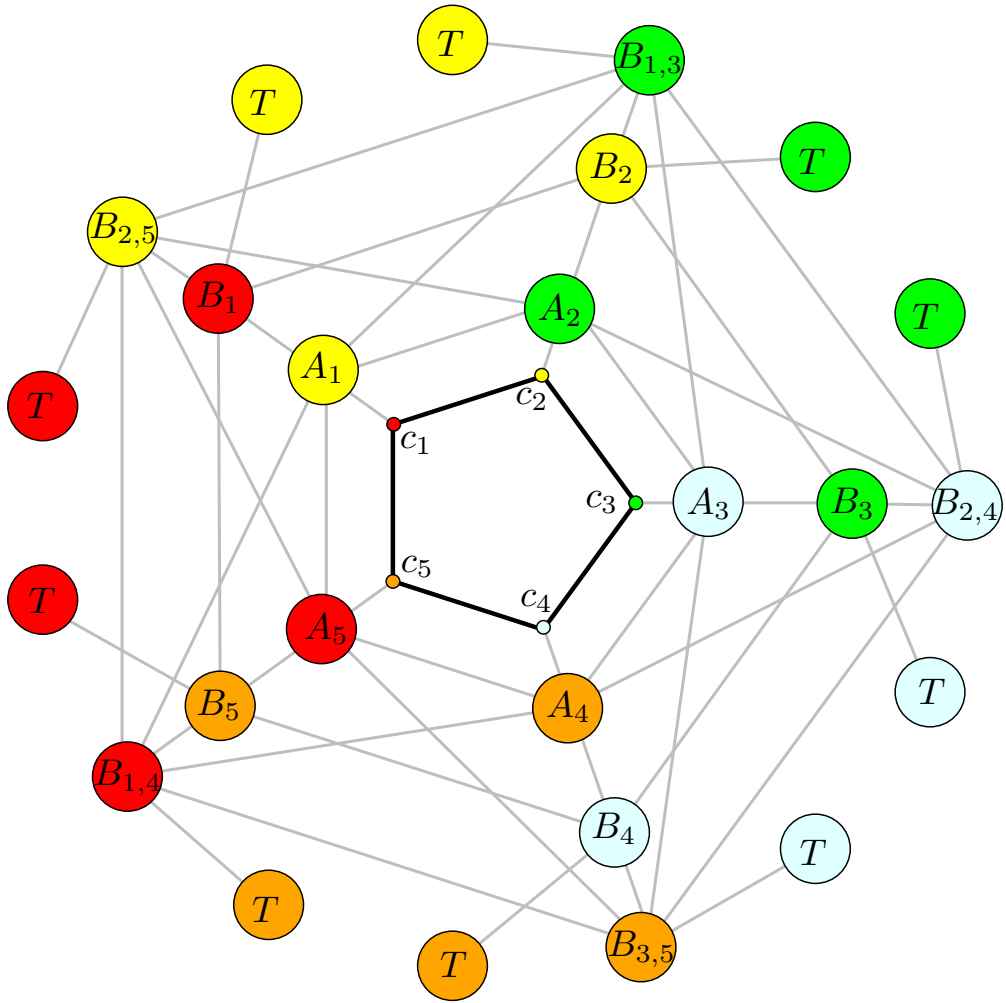}
    \caption{$C_5$-coloring of $G$ defined as in Subcase A3.}
    \label{fig:lemma2-case1b}
\end{figure}

\paragraph{Case 2B: $G$ is $C_5$-free.} Since $G$ is not bipartite, $G$ contains $C_7$ as an induced subgraph.
Let $C$ be the vertex set of some induced $C_7$ in $G$, denote the consecutive vertices as $c_1,\ldots,c_7$. Again, all operations on indices are performed modulo 7.
Let $A=N(C)$.
For $i\in [7]$ let $A_i=\{v \in A~|~N(v) \cap C=\{c_i\}\}$, $A_{i-1,i+1}=A_{i+1,i-1}=\{v \in A~|~N(v) \cap C=\{c_{i-1},c_{i+1}\}\}$, $\cA=\{A_i\}_{i \in [7]} \cup \{A_{i-1,i+1}\}_{i \in [7]}$.
Clearly, since $G$ is triangle-free, each set in $\cA$ is independent.
\begin{claim}
$\cA$ is a partition of $A$.
\end{claim}
\begin{claimproof}
Consider a vertex $v \in A$.
If $v$ has more than two neighbors in $C$, we either create a triangle or an induced $C_5$, a contradiction.
Moreover, because of that, if $v \in A$ has exactly two neighbors in $C$, they must be of the form $c_{i-1},c_{i+1}$ for some $i \in [7]$.
\end{claimproof}

\begin{claim}
For every $i \in [7]$ and every $v \in A_{i-1,i+1}$ we have $N(v) \cap A \subseteq A_{i-2,i} \cup A_{i,i+2} \cup A_i \cup A_{i-2} \cup A_{i+2}$.
\end{claim}
\begin{claimproof}
Recall that for each $i \in [7]$ the set $A_{i-1,i+1}$ is independent.
By symmetry consider $i=1$. Thus we need to show that $N(v) \cap A \subseteq A_{6,1} \cup A_{1,3} \cup A_1 \cup A_{6} \cup A_{3}$.
If there exists $u \in A_{2,4} \cup A_2$ (or, by symmetry, $u\in A_{5,7} \cup A_7$) such that $vu \in E(G)$, then $u,v,c_{2}$ is a triangle in $G$, a contradiction.
Next, if there exists $u \in A_{3,5}\cup A_5$ (or, by symmetry, $u\in A_{4,6} \cup A_4$) such that $vu \in E(G)$, then $\{u,v,c_7,c_6,c_5\}$ induces $C_5$ in $G$, also a contradiction.
\end{claimproof}

\begin{claim}
For each $i \in [7]$ and each $v \in A_i$ we either have $N(v) \cap A \subseteq A_{i-3} \cup A_{i-1,i+1} \cup A_{i-3,i-1}$ or $N(v) \cap A \subseteq A_{i+3}\cup A_{i-1,i+1} \cup A_{i+1,i+3}$.
\end{claim}
\begin{claimproof}
By symmetry assume $i=1$, and thus we have to show that $N(v) \cap A \subseteq A_{5} \cup A_{7,2} \cup A_{5,7}$ or $N(v) \cap A \subseteq A_{4}\cup A_{7,2} \cup A_{2,4}$. Clearly, $N(v) \cap A_1=N(v) \cap A_{1,3} = N(v) \cap A_{6,1} = \emptyset$, since $G$ is triangle-free.

Note that if there exists $u \in A_{2}$ (or, by symmetry, $u\in A_{7}$) such that $vu \in E(G)$, then $(u,v,c_1,c_7,c_6,c_5,c_4,c_3)$ is an induced $P_8$ in $G$, a contradiction. 
Moreover, if there exists $u \in A_{3}$ (or, by symmetry, $u\in A_{6}$) such that $vu \in E(G)$, then $\{u,v,c_1,c_2,c_3\}$ induces $C_5$ in $G$, a contradiction.
And if there is $u \in A_{3,5}$ (or, by symmetry, $u\in A_{4,6}$) such that $vu \in E(G)$, then $\{v,u,c_3,c_2,c_1\}$ induces $C_5$ in $G$, a contradiction.

Last, assume that there exists $u \in N(v) \cap (A_{5} \cup A_{5,7})$. We need to show that $u$ has no neighbors in $A_{4} \cup A_{2,4}$; by symmetry this will conclude the proof of claim.
If there is $w \in N(v) \cap (A_{4} \cup A_{2,4})$, then $\{u,v,w,c_4,c_5\}$ induces $C_5$ in $G$, a contradiction.
\end{claimproof}

Thus for every $i\in [7]$ we can further partition $A_i$ into $A^-_i=\{v \in A_i~|~N(v) \cap (A_{i+3}\cup A_{i+1,i+3})=\emptyset\}$, and $A^+_i=\{v \in A_i~|~N(v) \cap (A_{i+3}\cup A_{i+1,i+3})\neq\emptyset\}$.
Let $B = V(G) \setminus (C \cup A)$.

\begin{claim}
The set $B$ is independent, and for each $v \in B$ there is $i \in [7]$ such that $N(v) \subseteq A_{i-2,i} \cup A_{i,i+2}$.
\end{claim}
\begin{claimproof}
Let $v \in B$.
First, we note that for every $i \in [7]$ we have $N(v) \cap A_i = \emptyset$. Indeed, otherwise there exists $v' \in N(v) \cap A_i$ and $(v,v',c_i,c_{i+1},c_{i+2},c_{i+3},c_{i-3},c_{i-2})$ is a $P_8$ in $G$, a contradiction.

Next, note that $N(v) \cap A \neq \emptyset$. Otherwise, as $G$ is connected, there is $i \in [7]$, $a \in A_{i-1,i+1}$, and an induced path $(v,v_1,\ldots,v_p,a)$ in $G-C$ for some $p \geq 1$. Then $(v,v_1,\ldots,v_p,a,c_{i+1},c_{i+2},$ $c_{i+3},c_{i-3},c_{i-2})$ is an induced path on at least 8 vertices, a contradiction.

Therefore, we can assume that there exists $i \in [7]$ and $v' \in A_{i-2,i} \cap N(v)$. Without loss of generality, let $i=1$.
If now there is $u \in B \cap N(v)$, then clearly $uv' \notin E(G)$, as $G$ is triangle-free. Then $(u,v,v',c_1,c_2,c_3,c_4,c_5)$ is a $P_8$ in $G$, a contradiction. We conclude that $B$ is independent.

Now suppose that $v$ has a neighbor $u \in A_{j,j+2}$ for some $j \notin \{1,4,6\}$. If $j=7$ (or, by symmetry, $j=5$), then $\{v,v',c_1,c_7,u\}$ induces a $C_5$ in $G$, a contradiction. On the other hand, if $j=2$ (or, by symmetry, $j=3$), then $\{v',v,u,c_2,c_1\}$ is a $C_5$ in $G$, a contradiction.

Last, suppose that $v$ has neighbors $u \in A_{4,6}$ and $u' \in A_{1,3}$.
Then $\{u,v,u',c_3,c_4\}$ is an induced $C_5$, a contradiction.
\end{claimproof}

We can define now a function $h: V(G)\setminus B \to [5]$:
\begin{align*}
h(x)=\begin{cases}
i & \textrm{if }x\in\{c_i\}\cup A_{i-1,i+1}\textrm{ for }i \in [5], \\
i+1& \textrm{if }x\in A^+_{i}\textrm{ for }i \in \{0,...,4\} \\
i-1 & \textrm{if }x\in A^-_{i}\textrm{ for }i \in \{2,...,6\}, \\
1& \textrm{if }x\in\{c_6\} \cup A^+_5 \cup A_{5,7}\cup A^-_7, \\
2& \textrm{if }x\in\{c_7\} \cup A^-_1 \cup A_{6,1}\cup A^+_6.
\end{cases}
\end{align*}
It is now straightforward to verify that $h$ is a homomorphism from $G - B$ to $C_5$. Moreover, for each $i \in [7]$ the set $A_{i-1,i+1}$ is mapped by $h$ to precisely one vertex of $C_5$. Thus, analogously as in Claim~\ref{cla:t-homo}, $h$ can be extended to a homomorphism from $G$ to $C_5$ (see Figure~\ref{fig:lemma2-case2}). This concludes the proof of the lemma.
\end{proof}

\begin{figure}
    \centering
    \includegraphics[page=2,scale=1.4]{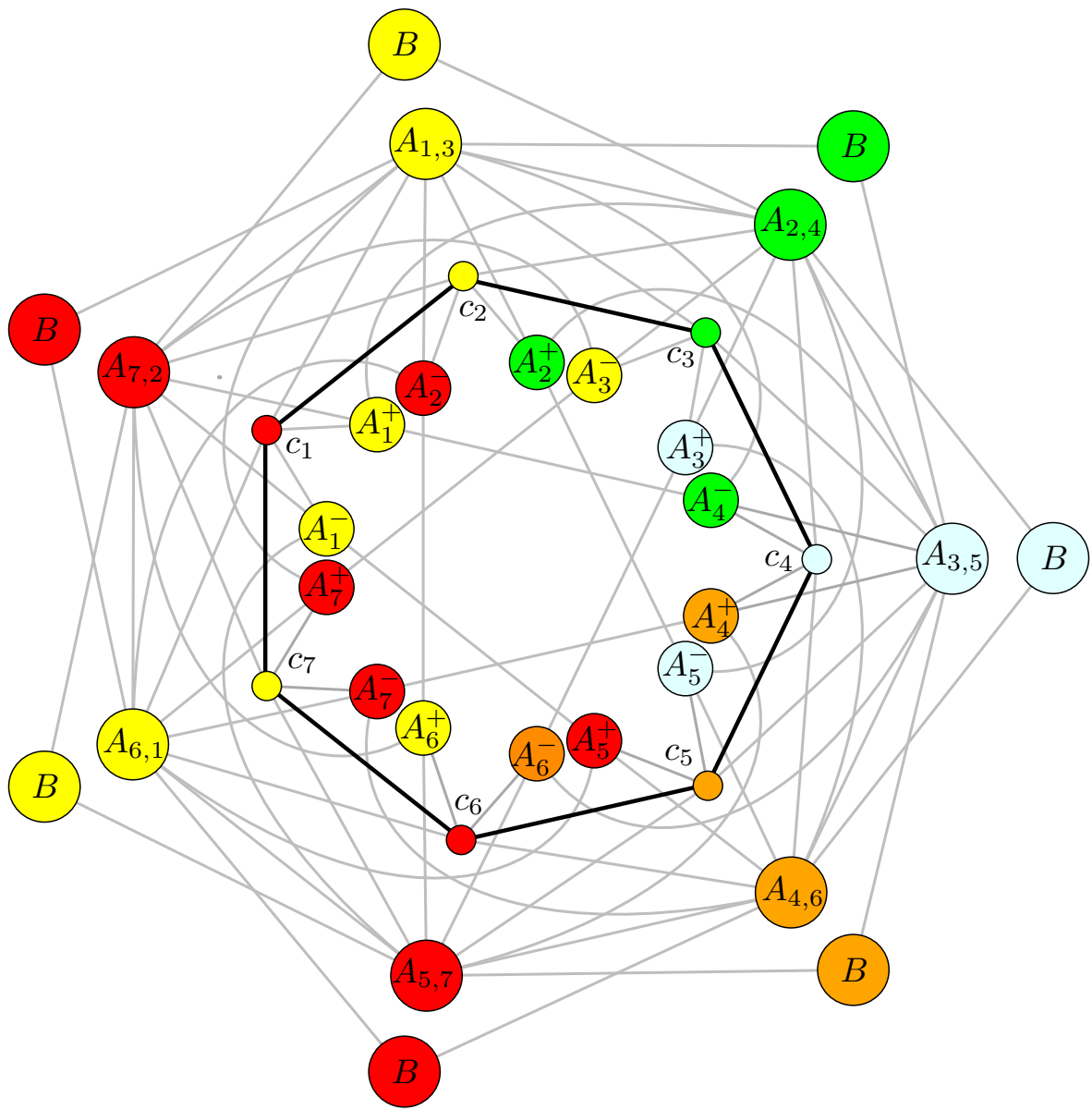}
    \caption{$C_5$-coloring of $G$ defined as in Case 2B.}
    \label{fig:lemma2-case2}
\end{figure}

\section{Minimal obstructions to $C_5$-coloring with no long subdivided claws}\label{sec:finiteclaw}
This section is devoted to the proof of Theorem~\ref{thm:finite-claws}.

\thmfiniteclaws*

The minimal obstructions are shown in Figure~\ref{fig:S221AndS311FreeObstructions} and can also be obtained from the database of interesting graphs at the \textit{House of Graphs}~\cite{DBLP:journals/dam/CoolsaetDG23} by searching for the keywords ``S221-free minimal obstruction to C5-coloring'' and ``S311-free minimal obstruction to C5-coloring'', respectively. As in this section the target graph is always $C_5$, we will keep writing \emph{minimal obstructions} for \emph{minimal obstructions to $C_5$-coloring}.
We proceed similarly as in the proof of Theorem~\ref{thm:finite-paths}.
Let $G$ be an $S_{2,2,1}$-free (respectively, $S_{3,1,1}$-free) minimal obstruction.
We again consider two cases.

\subsubsection*{Case 1: $G$ contains $C_5$ as an induced subgraph.}
This case is solved using the algorithm from Section~\ref{sec:algo}.
The algorithm is called with the parameters $(I=C_5,H=C_5,F=S_{2,2,1})$ and then with parameters $(I=C_5,H=C_5,F=S_{3,1,1})$. Both calls terminate,  returning a finite list of minimal obstructions.

\subsubsection*{Case 2: $G$ does not contain $C_5$ as an induced subgraph.}
Similarly as in the proof of Theorem~\ref{thm:finite-paths}, note that $K_3$ is a minimal $\{F,C_5\}$-free obstruction for $F \in \{S_{2,2,1},S_{3,1,1}\}$.
Thus, from now on, we assume that $G$ is $K_3$-free and prove that there are no more minimal obstructions satisfying this case, i.e., the following result.

\begin{lemma}\label{lem:sttt-finite}
Let $F \in \{S_{2,2,1},S_{3,1,1}\}$ and let $G$ be a $\{F,C_5,K_3\}$-free graph. Then $G$ is $C_5$-colorable.
\end{lemma}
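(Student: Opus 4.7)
The plan is to show that every connected, non-bipartite $\{F, C_5, K_3\}$-free graph $G$ is a blow-up of an odd cycle $C_\ell$ with $\ell \geq 7$; then $G \to C_\ell \to C_5$ via the natural projection followed by any homomorphism $C_\ell \to C_5$, which exists because $C_\ell$ has odd girth $\ell \geq 5$. Bipartite components map to an edge of $C_5$, so we may assume $G$ is connected and contains an induced odd cycle.

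Fix a shortest induced odd cycle $C = c_0 c_1 \ldots c_{\ell-1}$; the hypotheses force $\ell \geq 7$. For a vertex $v \notin C$ with $k \geq 2$ neighbors on $C$, list them in cyclic order as $c_{i_1}, \ldots, c_{i_k}$ and let $d_t$ be the arc length from $i_t$ to $i_{t+1}$ on $C$; note $\sum d_t = \ell$. Triangle-freeness forces each $d_t \geq 2$, and since $\ell$ is odd, an odd number of $d_t$ must be odd. For any $d_t \geq 3$ the cycle $(v, c_{i_t}, c_{i_t+1}, \ldots, c_{i_{t+1}})$ has length $d_t+2$ and is induced by the choice of consecutive $C$-neighbors; if this length is odd, it is either a $C_5$ or a shorter induced odd cycle than $C$, both contradictions. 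A short counting argument then shows $k \leq 2$ and, when $k=2$, the two neighbors are of the form $\{c_{i-1}, c_{i+1}\}$ for some $i$. This gives the partition $V(G) \setminus C = Z \cup \bigcup_i A_i \cup \bigcup_i B_i$ with $v \in A_i$ if $N(v) \cap C = \{c_i\}$, $v \in B_i$ if $N(v) \cap C = \{c_{i-1}, c_{i+1}\}$, and $v \in Z$ if $N(v) \cap C = \emptyset$.

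Now I use $F$-freeness to show $A_i = Z = \emptyset$. If $v \in A_i$ then the set $\{c_i, c_{i-1}, c_{i-2}, c_{i+1}, c_{i+2}, v\}$ induces an $S_{2,2,1}$ centered at $c_i$, and $\{c_i, c_{i-1}, c_{i-2}, c_{i-3}, c_{i+1}, v\}$ induces an $S_{3,1,1}$ centered at $c_i$; the needed non-edges inside $C$ all hold because $\ell \geq 7$. Hence $A_i = \emptyset$ in each of our two cases for $F$. If $Z \neq \emptyset$, connectivity gives $w \in Z$ adjacent to some $v' \in B_j$, and an analogous subdivided claw appears with center $v'$, arms along $C$ from $c_{j-1}$ and $c_{j+1}$, and singleton arm $(w)$; thus $Z = \emptyset$.

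Finally I prove that $B_i$ and $B_j$ are non-adjacent whenever their cyclic distance $d$ satisfies $d \geq 2$; this step uses only $K_3$- and $C_5$-freeness plus the minimality of $C$. For $d=2$ the two $B$-vertices share a $C$-neighbor, so an edge between them creates a triangle. For $d \geq 3$, an edge $uu'$ with $u \in B_0, u' \in B_d$ yields an induced odd cycle by combining $uu'$ with an appropriate arc of $C$: take $(u, c_1, c_2, \ldots, c_{d-1}, u')$ of length $d+1$ when $d$ is even, and $(u, c_{-1}, c_{-2}, \ldots, c_{d+1}, u')$ of length $\ell - d + 1$ when $d$ is odd. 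The chosen length is odd, at least $5$ (since $d \geq 3$) and at most $\ell - 2$ (since $d \leq (\ell-1)/2$); induced-ness follows because the other $C$-neighbors of $u$ and $u'$ lie outside the chosen arc. This cycle is therefore either an induced $C_5$ or a shorter induced odd cycle, both contradictions. Consequently the sets $V_i := B_i \cup \{c_i\}$ are independent, all edges of $G$ run between consecutive $V_i$ and $V_{i+1}$, so $G$ is a blow-up of $C_\ell$ and the homomorphism $G \to C_\ell \to C_5$ completes the proof. The main technical obstacle is this last case analysis, where one must pair each parity of $d$ with the correct arc of $C$ so that the resulting cycle is odd and strictly shorter than $\ell$; the bound $d \leq (\ell-1)/2$ is precisely what guarantees this.
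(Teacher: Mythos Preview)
Your proof is correct and follows essentially the same strategy as the paper: take a shortest odd cycle $C$ of length $\ell \geq 7$, use $F$-freeness to eliminate vertices with fewer than two $C$-neighbors (and vertices at distance $\geq 2$ from $C$), use $\{K_3,C_5\}$-freeness together with minimality of $\ell$ to force any two $C$-neighbors into the form $\{c_{i-1},c_{i+1}\}$, and conclude that $G$ is a blow-up of $C_\ell$. The only cosmetic differences are that the paper writes down an explicit $C_5$-coloring formula where you factor through $G \to C_\ell \to C_5$, and your parity case analysis for the non-adjacency of $B_i$ and $B_j$ spells out in detail what the paper compresses into ``otherwise we would obtain a shorter odd cycle.''
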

\begin{proof}
We can assume that $G$ is connected, as otherwise we can consider each of its connected components separately.
Clearly, if $G$ is bipartite, then the lemma holds, thus we can assume that there is an odd cycle $C$ of minimal length in $G$. Let $t \geq 7$ be the number of vertices of $C$, i.e., $C=C_t$. Denote by $c_0,c_1,c_2,\ldots,c_{t-1}$ the consecutive vertices of $C$. Since $t$ is minimal, $C$ must be an induced cycle.

Note that every $v \in V(G) \cap (N(C) \cup C)$ has precisely two neighbors $c_{i-1},c_{i+1}$ on $C$ for some $i \in [t-1]_0$ (we compute the indices modulo $t$). Indeed, if $v$ has one neighbor on $C$, say $c_i$, then $\{c_{i-2},c_{i-1},c_i,c_{i+1},c_{i+2},v\}$ induces $S_{2,2,1}$ in $G$, and $\{c_{i-3},c_{i-2},c_{i-1},c_i,c_{i+1},v\}$ induces $S_{3,1,1}$ in $G$, contradicting the fact that $G$ is $F$-free.
On the other hand, if $v$ has two neighbors $c_i,c_j$ such that $|i-j|\neq 2$ (in particular if $|N(v) \cap C| \geq 3$), then either $v,c_i,c_{i+1},\ldots,c_{j-1},c_j$ or $v,c_j,c_{j+1},\ldots,c_{i-1},c_i$ is an odd cycle on less than $t$ vertices, a contradiction with the minimality of $C$.

Now observe that $V(G)=N(C) \cup C$. Indeed, otherwise, since $G$ is connected, there is a vertex $v \in N(c_{i-1})\cup N(c_{i+1})$, and a vertex $u \in N(v) \setminus (N(C)\cup C)$. But then $\{c_{i-2},c_{i-1},v,c_{i+1},c_{i+2},u\}$ induces $S_{2,2,1}$ in $G$, and $\{c_{i-3},c_{i-2},c_{i-1},v,c_{i+1},u\}$ induces $S_{3,1,1}$ in $G$, again contradicting the fact that $G$ is $F$-free.

Thus, we can partition the vertices of $G$ into sets $A_{1,3},A_{2,4},\ldots,A_{t-2,1},A_{t-1,2}$ so that for each $v \in A_{i-1,i+1}$ we have $N(v) \cap C=\{c_{i-1},c_{i+1}\}$. Note that for every $i \in [t-1]_0$ and every $v \in A_{i-1,i+1}$ we have $N(v) \subseteq A_{i-2,i} \cup A_{i,i+2}$. Indeed, otherwise we would again obtain a shorter odd cycle in $G$, a contradiction.

Now it is straightforward to verify that a function $h:V(G) \to V(C_5)$ defined as:
\begin{align*}
h(v)=\begin{cases}
i & \textrm{ if }i \in \{0,1,2\} \textrm{ and } v \in A_{i-1,i+1}, \\
4  & \textrm{ if }i \geq 3 \textrm{ is odd and } v \in A_{i-1,i+1}, \\
5  & \textrm{ if }i \geq 3 \textrm{ is even and } v \in A_{i-1,i+1},
\end{cases}
\end{align*}
is a $C_5$-coloring of $G$.
\end{proof}

Combining the cases, we obtain the statement of Theorem~\ref{thm:finite-claws}.

\section{An infinite family of minimal obstructions}\label{sec:infinite}
In this section we construct infinite families of graphs that will be later used to prove Theorems~\ref{thm:infinite} and~\ref{thm:infinite-gen}. The construction is a generalization of the one designed for \colo{3}~\cite{DBLP:journals/jct/ChudnovskyGSZ20}; the authors attribute it to Pokrovskiy.

\subsection{The construction}

For every odd $q \geq 3$ and every $p \geq 1$, let $G_{q,p}$ be the graph on vertex set $[qp-3]_0$ (all arithmetic operations on $[qp-3]_0$ here are done modulo $qp-2$), such that for every $i \in [qp-3]_0$ it holds that
\[N(i)=\{{i-1},{i+1}\} \cup \{{i+qj-1} ~|~ j \in [p-1] \}. \]

To simplify the arguments, we partition the vertex set of $G_{q,p}$ into $q$ sets $V_s=\{i~|~i = s \mod q\}$, where $s \in [q-1]_0$. For a vertex $i \in V(G_{q,p})$ and $s \in [q-1]_0$, we say that $i$ is of \emph{type $s$}, if $i \in V_{s}$.
If we say that $i$ is of the type $s$ for some $s \geq q$, it means that $i$ is of type $s \mod q$.
If $i,j \in V(G_{q,p})$, we say that $(i,j)$ is \emph{of type} $(s_i,s_j)$ if $i$ is of type $s_i$ and $j$ is of type $s_j$.

The following observation follows immediately from the definition of $E(G_{q,p})$.
\begin{observation}\label{obs:neighbors}
Let $q \geq 3$ be odd, and let $ij \in E(G_{q,p})$ be such that $i \in V_s$ for some $s \in [q-1]_0$. If $i<j$, then $j \in V_{s-1}$, or $j=i+1$, or $i=0$ and $j=qp-3$. Analogously, if $j>i$, then $j \in V_{s+1}$, or $j=i-1$, or $i=qp-3$ and $j=0$.
\end{observation}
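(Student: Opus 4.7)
The plan is a direct case analysis unpacking the definition of $N(i)$. Fix $i \in V_s$ and $j \in N(i)$; then $j$ equals one of the ``short'' neighbors in $\{i-1, i+1\}$ or a ``long'' neighbor of the form $i + qk - 1$ for some $k \in [p-1]$, all interpreted modulo $qp-2$. For each such candidate I would determine (a) whether modular reduction is triggered when representing it as an integer in $[qp-3]_0$, (b) whether the resulting integer is larger or smaller than $i$, and (c) which set $V_t$ it belongs to. The observation then falls out by tabulating the outcomes and grouping by the sign of $j - i$.

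For the short candidates, $i + 1$ is a valid integer exceeding $i$ and of type $(s+1) \bmod q$ unless $i = qp-3$, in which case it reduces to $j = 0 < i$, producing the boundary clause ``$i = qp-3$ and $j = 0$''. Symmetrically, $i - 1$ is less than $i$ of type $(s-1) \bmod q$ unless $i = 0$, yielding the boundary clause ``$i = 0$ and $j = qp-3$''.

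For the long candidates $i + qk - 1$, since $0 \leq i \leq qp-3$ and $1 \leq k \leq p-1$, the sum lies in $[0, 2(qp-2))$, so reduction modulo $qp-2$ fires at most once. The key split is whether $i + qk - 1 < qp-2$: if so, no reduction occurs and one reads off $j = i + qk - 1 > i$ (using $qk - 1 \geq q - 1 \geq 2$) with $j \equiv s - 1 \pmod q$, hence $j \in V_{s-1}$; otherwise $j = i + q(k-p) + 1 \leq i - q + 1 < i$ with $j \equiv s + 1 \pmod q$, hence $j \in V_{s+1}$. Combining these six outcomes recovers precisely the dichotomy in the statement.

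No substantive obstacle arises; the only care needed is in the boundary situations $i \in \{0, qp-3\}$ for short candidates, and in verifying that the wrap-around dichotomy for long candidates matches both the integer ordering between $i$ and $j$ and the corresponding shift of type between $s-1$ and $s+1$.
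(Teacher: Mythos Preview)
Your proposal is correct and matches the paper's approach: the paper states that the observation ``follows immediately from the definition of $E(G_{q,p})$'' and gives no further argument, and your case analysis is exactly the direct unpacking of that definition. The only remark is that the paper leaves the details implicit, while you have spelled out the wrap-around bookkeeping carefully; nothing in your analysis is off.
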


Thus, from Observation~\ref{obs:neighbors}, it follows that $ij$ for $i <j$ is an edge of $G_{q,p}$ if and only if 
\begin{itemize}
    \item $(i,j)$ is of type $(s,s-1)$, for some $s \in [q-1]_0$, or
    \item $i \in [qp-3]_0$ and $j=i+1$, or
    \item $i={0}$ and $j={qp-3}$.
\end{itemize}

We now show that graphs $G_{q,p}$ are minimal obstructions to $H$-coloring for a rich family of graphs $H$.

\begin{lemma}\label{lem:infinite-obstruction}
Let $q \geq 3$ be an odd integer, and let $H$ be graph of odd girth $q$ that does not contain $C_4$ as a subgraph.
For every $p \geq 1$ the graph $G_{q,p}$ is a minimal obstruction to $H$-coloring.
\end{lemma}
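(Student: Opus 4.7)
The plan is to prove the two halves of the claim separately. For each $i \in \mathbb{Z}/(qp-2)$, the sequence $C_i := (i, i+1, \dots, i+q-1, i)$ forms a $q$-cycle in $G_{q,p}$, using the $q-1$ consecutive path edges together with the chord $\{i, i+q-1\}$ coming from $j=1$. Given any homomorphism $f\colon G_{q,p}\to H$, the image of $C_i$ is a closed walk of length $q$ in $H$; since $H$ has odd girth exactly $q$, any repeated vertex along this walk would split it into two closed sub-walks, one of odd length strictly less than $q$, producing a shorter odd cycle in $H$ and contradicting the odd-girth assumption. Thus $f(i), f(i+1), \dots, f(i+q-1)$ are pairwise distinct and span a $q$-cycle in $H$.

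The crucial step is then to compare the adjacent cycles $C_i$ and $C_{i+1}$, which share the path $(i+1, \dots, i+q-1)$. In $H$, the four vertices $f(i)$, $f(i+1)$, $f(i+q-1)$, $f(i+q)$ are pairwise joined in the pattern of a $4$-cycle by the edges $f(i)f(i+1)$ and $f(i+q-1)f(i+q)$ (path edges of $G_{q,p}$), together with $f(i)f(i+q-1)$ and $f(i+q)f(i+1)$ (the chord edges of $C_i$ and $C_{i+1}$). If these four vertices were pairwise distinct they would span a $C_4$ in $H$, contradicting the $C_4$-freeness of $H$. Of the $\binom{4}{2}=6$ possible collapses, four identify the endpoints of one of the above four edges and would create a loop in $H$; one identifies two distinct vertices of the $q$-cycle $f(C_i)$; the only remaining possibility is $f(i)=f(i+q)$. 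Iterating shows that $f$ is $q$-periodic on $\mathbb{Z}/(qp-2)$, and since $\gcd(q, qp-2)=\gcd(q, 2)=1$ (as $q$ is odd), $f$ must in fact be constant, contradicting the presence of edges in $G_{q,p}$.

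For minimality, I will use that $G_{q,p}$ is vertex-transitive: the shift $i\mapsto i+1$ clearly preserves every type of edge in the definition. Hence it suffices to exhibit an $H$-coloring of $G_{q,p} - \{0\}$. Define $g(i):=i\bmod q$ for $i\in \{1, \dots, qp-3\}$. Surviving path edges have $g$-values differing by $+1 \pmod q$; a chord $\{i, (i+qj-1)\bmod(qp-2)\}$ gives difference $-1\pmod q$ when it does not wrap around and $+1\pmod q$ when it does (via the identity $(i+qj-1)-(qp-2) = i-q(p-j)+1$). The only edge where this check fails is the wrap-around $\{0, qp-3\}$, and this is precisely the edge destroyed by deleting vertex $0$. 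Hence $g$ is a homomorphism into $C_q$, and composing with the inclusion of any $q$-cycle subgraph of $H$ (which exists by the odd-girth hypothesis) yields the desired $H$-coloring. The main obstacle is the $C_4$-freeness enumeration of the previous paragraph; the degenerate case $p=1$, where $G_{q,1}=C_{q-2}$, must be checked separately, but it is immediate since $C_{q-2}$ has odd girth $q-2<q$ and any vertex deletion leaves a bipartite path.
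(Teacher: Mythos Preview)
Your proof is correct and follows essentially the same strategy as the paper: both arguments hinge on the observation that any $q$ consecutive vertices of $G_{q,p}$ form a $q$-cycle whose image under a putative homomorphism must be a bijection onto a $q$-cycle of $H$, then use $C_4$-freeness to force $f(i)=f(i+q)$, and finally $H$-color $G_{q,p}$ minus one vertex by $i\mapsto i\bmod q$ into a fixed $q$-cycle of $H$.

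There are two minor but pleasant differences worth noting. First, the paper derives $h(j)=h(j-q)$ only for linear indices $j\in\{q,\dots,qp-3\}$ and then obtains the contradiction from the single edge $\{0,qp-3\}$, which forces a chord $a_0a_{q-3}$ in the image $q$-cycle and hence a shorter odd cycle in $H$; you instead work modulo $qp-2$ throughout, obtain $f(i)=f(i+q)$ for \emph{all} residues, and finish with the clean observation that $\gcd(q,qp-2)=1$ forces $f$ to be constant. Second, you explicitly dispose of the degenerate case $p=1$ (where $G_{q,1}=C_{q-2}$ and the main argument does not literally apply, since there are no $q$ consecutive vertices), which the paper passes over. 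Both variations are cosmetic in spirit, but your version is arguably tidier.
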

\begin{proof}
    To show that $G_{q,p}$ is a minimal obstruction to $H$-coloring, we need to show that
    (i) there is no homomorphism from $G_{q,p}$ to $H$,
    and that (ii) for each $v \in V(H)$ there is a homomorphism from $G_{q,p} - v$ to $H$.

    To see (i), assume that there exists a homomorphism $h$ from $G_{q,p}$ to $H$.
    \begin{claim}
        For each $j \in \{q, \ldots,qp-3\}$ we have $h(j)=h(j-q)$.
    \end{claim}
    \begin{claimproof}
   From the construction of $G_{q,p}$ it follows that each set of $q$ consecutive vertices $\{i,i+1,\ldots,i+q-1\}$ of $G_{q,p}$ induces an odd cycle of length $q$.
   This implies that their images $(h(i),h(i+1),\ldots,h(i+q-1))$ form a closed walk of length $q$ in $H$.
   As the odd girth of $H$ is $q$, we conclude that $\{h(i),h(i+1),\ldots,h(i+q-1)\}$ must induce an odd cycle of length $q$ in $H$.
   In particular, they must be mapped by $h$ to the distinct vertices of $H$.
   
    Assume the claim does not hold, and choose the smallest $j \geq q$ for which $h(j)\neq h(j-q)$.     
    Since $j-q,j-q+1$, and $j-1$ belong to a set of $q$ consecutive vertices of $G$, by the previous paragraph, $h(j-q),h(j-q+1)$, and $h(j-1)$ are three pairwise distinct vertices.
    But if now $h(j)$ and $h(j-q)$ are not mapped to the same vertex of $H$, vertices $h(j-q),h(j-q+1),h(j-1)$, and $h(j)$ form a (not-necessarily induced) $C_4$ in $H$, a contradiction. 
    \end{claimproof}
    
    The claim guarantees that for every $s \in[q-1]_0$, all the vertices that belong to $V_s$ are mapped to the same vertex of $H$, say $a_s$. By the fact that $h$ is a homomorphism, the set $\{a_0,a_1,\ldots,a_{q-1}\}$ induces a $q$-cycle in $H$.
    But now, since $0(qp-3) \in E(G_{q,p})$, we have that $h(0)h(qp-3)=a_0a_{q-3} \in E(H)$, a contradiction with the fact that the odd girth of $H$ is $q$.

    It remains to show (ii), i.e., for each $v \in V(G_{q,p})$ there is a homomorphism from $G_{q,p} - v$ to $H$.
    By symmetry of $G_{q,p}$, it is enough to consider the case when $v=qp-3$.
    As the odd girth of $H$ is $q$, there exists a $q$-cycle with consecutive vertices $a_0,\ldots,a_{q-1}$ in $H$.
    We claim that $h:V(G_{q,p}) \to V(H)$ defined as $h(i)=a_{i \mod q}$ for every $i \in V(G_{q,p})$ is a homomorphism.
    Indeed, take an edge $ij$ of $G$ and without loss of generality assume that $i<j$.
    Then by Observation~\ref{obs:neighbors}, we have either $h(i)h(j)=a_{i\bmod q}a_{i-1 \bmod q} \in E(H)$,
    or $h(i)h(j)=a_{i\bmod q}a_{i+1 \bmod q} \in E(H)$.
\end{proof}

\subsection{Excluded induced subgraphs of $G_{q,p}$}

Now let us show an auxiliary lemma that will be helpful in analyzing induced subgraphs that (do not) appear in $G_{q,p}$.
In particular, it implies that in order to prove that for each $p$, the graph $G_{q,p}$ is $F$-free for some graph $F$,
it is sufficient to show that this statement holds for some small values of $p$.

\begin{lemma}\label{lem:reducetofinite} 
Let $q$ be a fixed constant and $F$ be a graph.
Let $p \geq |V(F)|+2$.
If $G_{q,p-1}$ is $F$-free, then $G_{q,p}$ is $F$-free.
\end{lemma}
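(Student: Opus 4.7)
The plan is to take an induced copy of $F$ in $G_{q,p}$ on a vertex set $S$ of size $|V(F)| \le p-2$ and transfer it to $G_{q,p-1}$ via a deletion followed by a relabeling. First I would find $q$ consecutive positions of $V(G_{q,p}) = \{0, 1, \ldots, qp-3\}$, in the cyclic order, that all avoid $S$: since the $|S|$ gaps between consecutive elements of $S$ have total length $qp-2-|S|$, if every gap had size at most $q-1$ then this total would be bounded by $|S|(q-1) \le (p-2)(q-1)$, and a direct computation using $q \ge 3$ shows that $(p-2)(q-1) < qp - 2 - |S|$, a contradiction. Hence some gap has size at least $q$.

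Next, using the vertex-transitivity of $G_{q,p}$ under the cyclic shift $i \mapsto i+1 \pmod{qp-2}$ (which is visibly an automorphism by Observation~\ref{obs:neighbors}), I rotate $S$ so that the gap of $q$ consecutive non-$S$ vertices occupies the positions $\{qp-q-2, \ldots, qp-3\}$. This places $S$ inside $\{0, 1, \ldots, qp-q-3\} = V(G_{q,p-1})$, so it is meaningful to view $S$ simultaneously as a subset of $V(G_{q,p})$ and of $V(G_{q,p-1})$. A careful edge-by-edge comparison using the defining neighbor description of the two graphs (mod $qp-2$ for $G_{q,p}$ and mod $qp-q-2$ for $G_{q,p-1}$) shows that for $i < j$ in $S$ the adjacency condition agrees in the two graphs on every difference $j-i$ except possibly the single value $j-i = qp-q-3$, which is the wrap-around edge of $G_{q,p-1}$ but a non-edge of $G_{q,p}$ (the difference $q+1$ not being of the form $kq-1$ for $q \ge 3$).

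The hard part will be to rule out that this single potential discrepancy actually occurs, i.e.\ to ensure that $\{0, qp-q-3\} \not\subseteq S$ after the rotation. Here the extra slack in the hypothesis $|V(F)| \le p-2$ (as opposed to $|S| \le p-1$, which would already suffice for a gap of size $q$) enters: either a gap of size strictly greater than $q$ can be located and then the deleted arc of length $q$ can be shifted by one position inside it so that one of its two boundary vertices falls in the interior of the gap (and hence outside $S$), or one exploits that a second gap of size $\ge q$ is available and repositions the deletion to that gap. In either case $G_{q,p-1}[S] = G_{q,p}[S] \cong F$, showing that $G_{q,p-1}$ contains $F$ as an induced subgraph whenever $G_{q,p}$ does, which is the contrapositive of the lemma.
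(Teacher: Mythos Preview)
Your overall plan is the paper's: locate a run of consecutive positions in $V(G_{q,p})$ avoided by the copy $S$ of $F$, excise that run to drop down to $G_{q,p-1}$, and check that adjacencies survive. You excise by rotating (via vertex-transitivity) so the run sits at the top of the cyclic order; the paper instead normalizes so that $qp-3\notin U$, locates a run $\{\ell,\dots,\ell+q\}$ of $q+1$ consecutive non-$U$ positions, and shifts only the part of $U$ to its right down by $q$. These are cosmetic variants of the same idea, your edge-by-edge comparison is accurate, and you correctly isolate the sole possible discrepancy, namely the wrap-around edge $\{0,qp-q-3\}$ of $G_{q,p-1}$ (which is a non-edge of $G_{q,p}$).

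The genuine gap is in your ``hard part''. The dichotomy you offer---either some cyclic gap has size $>q$, or a second gap of size $\ge q$ is available to reposition into---is not exhaustive, and the second branch does not actually help. Take $q=3$, $p=6$ (so $|V(F)|=4=p-2$ and the hypothesis $p\ge |V(F)|+2$ is tight) and $S=\{0,4,8,12\}$ in the $16$-cycle. All four cyclic gaps $\{1,2,3\},\{5,6,7\},\{9,10,11\},\{13,14,15\}$ have size exactly $q=3$, so option~(a) is unavailable. Repositioning to any other gap (option~(b)) reproduces the same picture up to rotation: both boundary vertices of \emph{every} gap lie in $S$, so after any placement of the deleted arc you still have $\{0,qp-q-3\}\subseteq S$, and the wrap-around discrepancy persists. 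Thus the extra slack ``$|V(F)|\le p-2$'' over ``$|V(F)|\le p-1$'' does not by itself buy you a gap of size $>q$, nor a gap with a boundary vertex outside $S$; an additional argument is required.

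The paper's device to avoid a separate ``hard part'' is to aim from the outset for a block of $q+1$ (not $q$) consecutive non-$U$ positions, together with the normalization $qp-3\notin U$; with a block of that size the shifted image automatically misses both endpoints of the wrap-around edge, so the discrepancy never arises. That is the cleaner route for your write-up as well---but note that the pigeonhole step for a $(q+1)$-block is more delicate than it looks and deserves a careful justification rather than a one-liner.
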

\begin{proof}
Assume otherwise, and let $U \subseteq V(G_{q,p})$ induce a copy of $F$ in $G_{q,p}$, in particular $|V(F)|=|U|$. 
Since $|V(F)| <qp-2$, we can assume without loss of generality that the vertex ${qp-3}$ does not belong to $U$.
Since $|V(G_{q,p})|=qp-2>q(|V(F)|+1)$, there exist $q+1$ consecutive vertices $\ell,\ldots,{\ell+q}$ that do not belong to $U$.
Define $R=\{j\in U~|~j>\ell+q\}$, $R'=\{{j-q}~|~j \in R\}$, and let $L=U\setminus R$.

Now consider $U'=L \cup R'$, and note that $\ell \notin U'$. It is straightforward to verify that $U' \subseteq V(G_{q,p-1})$.
We will show that $U'$ induces a copy of $F$ in $V(G_{q,p-1})$.
Since this is a contradiction with our assumption, we then conclude that $G_{q,p}$ is $F$-free.

Let $i,j \in U'$, let $s \in [q-1]_0$ be such that $i \in V_s$. Assume without loss of generality that $i <j$. 
Note that it is enough to show that
\begin{itemize}
    \item if $i,j < \ell$, then $ij \in E(G_{q,p-1})$ if and only if $ij \in E(G_{q,p})$, 
    \item if $i,j > \ell$, then $ij \in E(G_{q,p-1})$ if and only if $(i+q)(j+q) \in E(G_{q,p})$, 
    \item if $i < \ell < j$, then $ij \in E(G_{q,p-1})$ if and only if $i(j+q) \in E(G_{q,p})$.
\end{itemize}
The first item is straightforward.

For the second item, by Observation~\ref{obs:neighbors} we have that $ij \in E(G_{q,p-1})$ if and only if $j=i+1$ or $j\in V_{s-1}$. 
The first is equivalent to $j+q=(i+q)+1$, the latter is equivalent to ${j+q} \in V_{s-1}$.
Hence again using Observation~\ref{obs:neighbors} we obtain that  $ij \in E(G_{q,p-1})$ if and only if $({i+q})({j+q}) \in E(G_{q,p})$.

For the last item, note that $i < \ell < j$ implies $i \in L$ and $j \in R'$.
If $ij \in E(G_{q,p-1})$, then by Observation~\ref{obs:neighbors}, either $j=i+1$ or $v_j \in V_{s-1}$.
Since $i < \ell <j$, the first case is not possible.
In the second case, if $v_j \in V_{s-1}$, then ${j+q} \in V_{s-1}$, so $i({j+q}) \in E(G_{q,p})$. 
On the other hand, if $i({j+q}) \in E(G_{q,p})$, then, since $i < \ell <j$, it cannot happen that $j+q=i+1$.
If ${j+q} \in V_{s-1}$ then $j \in V_{s-1}$, so we conclude that $ij \in E(G_{q,p-1})$. 
That concludes the proof.
\end{proof}

The power of Lemma~\ref{lem:reducetofinite} is that in order to show that $G_{q,p}$ is $F$-free for \emph{every} $p$, it is sufficient to prove it for a finite (and small) set of graphs. This is encapsulated in the following, immediate corollary.

\begin{corollary}\label{cor:reducetofinite}
    Let $q$ be a fixed constant and $F$ be a graph. 
    If $G_{q,p}$ is $F$-free for every $p \leq |V(F)|+1$, then $G_{q,p}$ is $F$-free for every $p$.
\end{corollary}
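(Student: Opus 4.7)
The plan is a straightforward induction on $p$, using Lemma~\ref{lem:reducetofinite} as the inductive engine. Set $n := |V(F)|$. The statement to prove by induction on $p$ is: $G_{q,p}$ is $F$-free for every $p \geq 1$. The base of the induction consists of all values $p \leq n+1$, which are covered directly by the hypothesis of the corollary (no work is needed here beyond quoting the assumption).

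For the inductive step, fix $p \geq n+2$ and assume that $G_{q,p-1}$ is $F$-free. Since $p \geq n+2 = |V(F)|+2$, the hypothesis of Lemma~\ref{lem:reducetofinite} is satisfied, so we may apply it to conclude that $G_{q,p}$ is $F$-free. This completes the inductive step and, together with the base cases, establishes that $G_{q,p}$ is $F$-free for every $p$.

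There is essentially no obstacle here: Lemma~\ref{lem:reducetofinite} already contains all the combinatorial content (the re-indexing argument showing how an induced $F$ in $G_{q,p}$ descends to an induced $F$ in $G_{q,p-1}$ whenever $p \geq n+2$), and the corollary is its immediate iteration. The only subtle point worth flagging, in case of any ambiguity in reading the lemma, is that the threshold $p \geq |V(F)|+2$ in the lemma corresponds exactly to ``$p-1 \geq |V(F)|+1$'', so the inductive step meshes seamlessly with the base range $p \leq |V(F)|+1$ and no value of $p$ is missed.
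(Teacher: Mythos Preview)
Your proof is correct and matches the paper's approach: the paper states the corollary as an ``immediate'' consequence of Lemma~\ref{lem:reducetofinite} without giving a separate proof, and your induction on $p$ is exactly the intended iteration of that lemma.
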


Consequently, for every fixed $q$ and $F$, Corollary~\ref{cor:reducetofinite} reduces the problem of showing that $G_{q,p}$ is $F$-free to a constant-size task that can be tackled with a computer.

\subsection{Proof of Theorem~\ref{thm:infinite-gen} and Theorem~\ref{thm:infinite}}

Now let us analyze what induced paths and subdivided claws appear in $G_{q,p}$.
We start with showing that for every odd $q \geq 3$ and every $p \geq 1$ the graph $G_{q,p}$ is $qK_2$-free, i.e., they exclude an induced matching on $q$ edges. Here, an induced matching is a set of edges that are not only pairwise disjoint, but also non-adjacent.

\begin{lemma}\label{lem:qk2-free}
Let $q \geq 3$ be an odd integer.
For every $p \geq 1$ the graph $G_{q,p}$ is $qK_2$-free.
\end{lemma}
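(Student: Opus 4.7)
I would prove the lemma by contradiction. Suppose $M = \{e_1, \ldots, e_q\}$ is an induced matching of size $q$ in $G_{q,p}$, with $e_i = \{a_i, b_i\}$ and $a_i < b_i$. Applying Corollary~\ref{cor:reducetofinite} with $F = qK_2$ (so $|V(F)| = 2q$) lets me restrict to $p \leq 2q+1$, and for $p \leq 2$ the total vertex count $qp - 2 < 2q$ trivially precludes $qK_2$; henceforth I assume $3 \leq p \leq 2q + 1$.

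The plan is a cyclic gap count combined with a modular pigeonhole. I would arrange the $2q$ endpoints in cyclic order $x_1 < x_2 < \cdots < x_{2q}$ on the Hamiltonian cycle $C = (0, 1, \ldots, qp-3, 0)$, and let $d_i = x_{i+1} - x_i$ for $i < 2q$ and $d_{2q} = qp - 2 - x_{2q} + x_1$, so $\sum_{i=1}^{2q} d_i = qp - 2$. Each $d_i$ is either \emph{within} an edge of $M$ (both $x_i, x_{i+1}$ endpoints of some $e_j$, forcing $d_i \in \{1\} \cup \{kq - 1 : k \in [p-1]\}$) or \emph{between} distinct edges of $M$, in which case non-adjacency forces $d_i \geq 2$ and $d_i \notin \{kq - 1 : k \in [p-1]\}$ (with a symmetric condition on $qp - 2 - d_i$).

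By Observation~\ref{obs:neighbors}, every edge of $G_{q,p}$ except the wrap edge $\{0, qp - 3\}$ has endpoints in consecutive type classes $V_s, V_{s+1 \bmod q}$, where $V_s = \{v \in V(G_{q,p}) : v \equiv s \pmod{q}\}$. With $q$ edges distributed over $q$ type pairs, a pigeonhole dichotomy arises: either two edges of $M$ share a type pair (which I would show creates an unwanted cross-edge violating the induced matching property), or each type pair contains exactly one edge, in which case writing $e_s = \{u_s, v_s\}$ with $u_s \in V_s$ and $v_s \in V_{s+1}$, the sequences $(u_s)_{s=0}^{q-1}$ and $(v_s)_{s=0}^{q-1}$ must satisfy strong cyclic non-adjacency constraints. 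Combined with the budget $\sum_{i=1}^{2q} d_i = qp - 2$, these force the minimum possible gap total to strictly exceed $qp - 2$, the desired contradiction.

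The main obstacle I foresee is the careful bookkeeping when chord edges in $M$ have long spans containing intermediate endpoints of $M$, so that the within-edge span $b_j - a_j$ splits into several consecutive $d_i$'s rather than a single gap. A further technical wrinkle is the wrap edge $\{0, qp - 3\}$: if it lies in $M$, its endpoints fall in $V_0$ and $V_{q-3}$, which are not consecutive type classes for $q \geq 5$, so this edge has to be handled as a special case with an offset applied to the modular pigeonhole step for the remaining $q - 1$ edges.
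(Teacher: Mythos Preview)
Your type-pair pigeonhole step is exactly right and matches the paper: for each $s$, at most one edge of $M$ can have its endpoints in $V_s$ and $V_{s+1}$ (otherwise a cross-edge appears), and the wrap edge $\{0,qp-3\}$ is a single exceptional edge whose presence already kills two of the ``consecutive'' type pairs, forcing $|M|\le q-1$. So far so good.

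The genuine gap is the gap-budget half of your plan. As stated it cannot close: with $q$ ``within'' gaps of size $\ge 1$ and $q$ ``between'' gaps of size $\ge 2$, the minimum total is only $3q$, which is far below $qp-2$ for moderate $p$, so no contradiction follows from the budget alone. The extra constraints you allude to (that between-gaps avoid $\{kq-1\}$) are only the adjacency constraints on \emph{cyclically consecutive} endpoints; they say nothing about non-adjacency between non-consecutive endpoints, which is where the real tension lies. Worse, once you allow chord edges to interleave (your acknowledged obstacle), the within/between dichotomy collapses: if $a_1<a_2<b_1<b_2$ then none of the consecutive gaps are ``within'' an edge, and your lower bound only gets weaker. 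Finally, the appeal to Corollary~\ref{cor:reducetofinite} is harmless but does no work here: bounding $p\le 2q+1$ does not tighten the budget enough to help.

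The paper bypasses the budget entirely with a short ordering argument. Assume the wrap edge is absent, so each of the $q$ type pairs $(s,s{+}1)$ carries exactly one edge $i_sj_s\in M$ with $i_s<j_s$. One checks (four tiny cases) that for consecutive type pairs the smaller endpoints must be strictly increasing: $i_s<i_{s+1}$ for every $s\in[q-1]_0$. Chasing this around the cycle gives $i_0<i_1<\dots<i_{q-1}<i_0$, a contradiction. This replaces your gap count with a one-line cyclic-order argument and avoids all the interleaving bookkeeping.
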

\begin{proof}
Let $M$ be a set of edges of $G_{q,p}$ that induces a matching. We aim to show that $|M| \leq q-1$.
\begin{claim}\label{cla:one-edge}
    For each $s \in [q-1]_0$, the set $M$ contains at most one edge of type $(s,s+1)$ or $(s+1,s)$.
\end{claim}
\begin{claimproof}
    Let $ij$ and ${i'}{j'}$ be some distinct edges in $M$ for $i<j$ and $i'<j'$.
    
    First, assume that $(i,j)$ and $(i',j')$ are of the same type $(a,b)$.
    By symmetry we can assume that $i<i'$ (since they belong to $M$, we cannot have $i = i'$).
    Observe that if $(a,b)=(s,s+1)$, then in particular $j=i+1$ and $j'=i'+1$ and thus $j<i'$.
    However, as $(j,i')$ is of type $(s+1,s)$, we conclude $ji'$ is an edge of $G_{q,p}$,
    so $M$ is not an induced matching, a contradiction.
    If, on the other hand, $(a,b)=(s+1,s)$, then $(i,j')$ is of type $(s+1,s)$. Thus $ij'$ is an edge of $G_{q,p}$, which again leads to a contradiction. 
    
    Now, let $ij, i'j'$ be the edges in $M$, such that $(i,j)$ is of type $(s,s+1)$ and $(i',j')$ is of type $(s+1,s)$.
    This means that $(j,j')$ is of the type $(s+1,s)$, that $(i,i')$ is of type $(s,s+1)$, and that $j=i+1$.
    Thus, either $i<j<i'<j'$ or $i'<i<j<j'$, or $i'<j'<i<j$.
    But then, in the first two cases, there is the edge $jj'$ in $G_{q,p}$, and in the last case we have the edge $i'i$,
    a contradiction with $M$ being an induced  matching.
\end{claimproof}

Note that there is at most one edge in $G_{q,p}$ of type $(0,q-3)$, namely $0(qp-3)$, thus there is at most one edge in $M$ of type $(0,q-3)$. If the edge $0(pq-3)$ belongs to $M$, then, since $M$ is an induced matching, no edge of the type $(0,q-1)$, $(q-1,0)$, $(0,1)$, nor $(1,0)$ belongs to $M$. Thus, by Claim~\ref{cla:one-edge}, $|M|\leq q-1$.

Therefore, we can assume that the edge $0(pq-3)$ does not belong to $M$. 
In this case, for every $s\in[q-1]_0$, the only possible types of an edge in $G_{q,p}$ that have $s$ on the first coordinate are $(s,s+1)$ and $(s,s-1)$, and for $t \in \{s+1,s-1\}$ there is at most one edge of type $(s,t)$ or $(t,s)$.
This in particular means that $|M| \leq q$.

\begin{claim}\label{clm:ordering}
Let $s \in [q-1]_0$, and let $ij, i'j'$ be the edges in $M$ such that $i <j$ and $i'<j'$.
    If $(i,j)$ is of type $(s,s+1)$ or $(s+1,s)$, and $(i',j')$ is of type $(s+1,s+2)$ or $(s+2,s+1)$, then $i <i'$.
\end{claim}
\begin{claimproof}
    Assume otherwise, i.e., that $i'<i$. If $(i',j')$ is of type $(s+1,s+2)$, then we must have $j'=i'+1$, thus $i'<j'<i<j$.
    Therefore there is an edge $i'i$ (if $(i,j)$ is of type $(s+1,s+2)$) or an edge $i'j$ (if $(i,j)$ is of type $(s+2,s+1)$), in both cases a contradiction.

    Now consider the case when $(i',j')$ is of type $(s+2,s+1)$. In that case, however, either there is an edge $i'i$ (if $(i,j)$ is of type $(s+1,s+2)$) or an edge $i'j$ (if $(i,j)$ is of type $(s+2,s+1)$), again a contradiction.
\end{claimproof}

Suppose that $|M|=q$. This means that for every $s \in [q-1]_0$ there is an edge $ij$ in $M$ such that $(i,j)$ is of type $(s,s+1)$ or $(s+1,s)$.
Let $ij \in M$, where $i < j$, be such that $i$ is smallest possible (i.e., $ij$ is the ``first'' edge of $M$).
Let the type of $(i,j)$ be $(s,s+1)$ or $(s+1,s)$.
Let $i'j'$, where $i' < j'$, be the edge of $M$ such that $(i',j')$ is of type $(s-1,s)$ or $(s,s-1)$; it exists by our assumption.
By Claim~\ref{clm:ordering}, we conclude that $i' < i$, which contradicts the choice of $ij$.
Thus $|M|<q$, which concludes the proof.
\end{proof}

Let us remark that Lemma~\ref{lem:qk2-free} is best possible, i.e., if $p$ is large enough, then $G_{q,p}$ contains $(q-1)K_2$ as an induced subgraph.
We do not prove it, as later, in Lemma~\ref{lem:lowerpath}, we will show a stronger result. 
Let us turn our attention to induced paths and subdivided claws that do not appear in $G_{q,p}$.

\begin{lemma}\label{lem:infinite-easybounds}   
    Let $q \geq 3$ be an odd integer.    
    For every $p \geq 1$ the graph $G_{q,p}$ is $\{P_{3q-1}, S_{2,2,2},$ $S_{\nicefrac{3(q-1)}{2},\nicefrac{3(q-1)}{2},1} \}$-free.
\end{lemma}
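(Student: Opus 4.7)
The plan is to prove the three freeness claims separately, leveraging Lemma~\ref{lem:qk2-free} as heavily as possible.

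First I would dispose of $P_{3q-1}$-freeness and $S_{\nicefrac{3(q-1)}{2},\nicefrac{3(q-1)}{2},1}$-freeness together. Any induced $P_n$ contains an induced matching of size $\lfloor n/2\rfloor$ obtained by taking alternate edges. Combined with Lemma~\ref{lem:qk2-free}, this forces $\lfloor n/2\rfloor \leq q-1$, i.e., $n \leq 2q-1$. Hence $G_{q,p}$ is $P_n$-free for every $n \geq 2q$. Since $3q-1 \geq 2q$ for $q \geq 1$, we get $P_{3q-1}$-freeness. The same bound also gives $P_{3q-2}$-freeness (valid for $q \geq 2$), and because the spider $S_{\nicefrac{3(q-1)}{2},\nicefrac{3(q-1)}{2},1}$ contains an induced $P_{3q-2}$ (the path traversing the two long legs through the center), this immediately yields $S_{\nicefrac{3(q-1)}{2},\nicefrac{3(q-1)}{2},1}$-freeness.

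For $S_{2,2,2}$-freeness, the case $q=3$ follows again from Lemma~\ref{lem:qk2-free}: the three leaf edges $v_iu_i$ of an induced $S_{2,2,2}$ form an induced $3K_2$, which is excluded when $q=3$. For $q \geq 5$, I would proceed by contradiction. Assume there is an induced $S_{2,2,2}$ with center $c$, leaves $v_1,v_2,v_3$, and pendants $u_1,u_2,u_3$. By vertex-transitivity of the circulant graph $G_{q,p}$ we may place $c=0$, so each $v_i$ lies in the connection set $S = \{1,qp-3\} \cup \{jq-1 : j\in[p-1]\} \subseteq \mathbb{Z}/(qp-2)$. A short check (using Observation~\ref{obs:neighbors} and that all pairwise differences in $S$ land outside $S$ for $q\geq 5$) shows that $N(0)$ is independent, so any three of its vertices qualify as the triple of leaves. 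Separate the $v_i$ into \emph{cycle leaves} (the elements $1$ and $qp-3$) and \emph{chord leaves} ($jq-1$); there are at most two cycle leaves, so the possible configurations are $0$, $1$, or $2$ cycle leaves among $v_1,v_2,v_3$.

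The core of the proof is a case analysis on these three configurations. In each one, I would show that for some index $i$, every candidate pendant $u_i \in N(v_i) \setminus \{c\}$ forces $u_i \sim v_{i'}$ for some $i' \neq i$. Concretely, if $v_\ell = j_\ell q - 1$ is a chord leaf, the pendants of $v_\ell$ are precisely $\{j_\ell q-2, j_\ell q\} \cup \{rq-2 : r \in [j_\ell+1,p-1]\} \cup \{rq : r \in [1,j_\ell-1]\}$ (after reducing the forward and backward chord offsets modulo $qp-2$). The differences with a second chord leaf $v_{\ell'} = j_{\ell'}q-1$ are then of the form $(r-j_{\ell'})q - 1$ or $(r-j_{\ell'})q + 1$, which land in $S$ as soon as the residue $r - j_{\ell'}$ (after any unavoidable modular wrap) is in $[p-1]$. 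In the all-chord case, choosing $v_2$ to be the leaf of middle chord index $j_1 < j_2 < j_3$, every candidate for $u_2$ collapses into $N(v_1) \cup N(v_3)$; in the one-cycle case, the same calculation eliminates the pendants of the leaf with largest chord index; and in the two-cycle case ($v_1 = 1$, $v_2 = qp-3$, and $v_3 = j_3q - 1$), the pendants of $v_3$ are all forced into $N(v_2)$. The principal obstacle will be presenting the modular bookkeeping in $\mathbb{Z}/(qp-2)$ uniformly for all odd $q \geq 5$; I expect the closure of the chord offset set under negation, which makes both $N(v)$ and its pendant set arithmetic progressions of step $q$, to keep all three subcases clean.
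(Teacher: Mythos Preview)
Your argument for $P_{3q-1}$-freeness contains a genuine error: alternate edges of an induced path do \emph{not} form an induced matching. In $P_n = p_1p_2\cdots p_n$ the subgraph induced on the endpoints of $\{p_1p_2, p_3p_4, \ldots\}$ is all of $P_n$ (or $P_{n-1}$), not a disjoint union of edges, because each consecutive pair of chosen edges is joined by the path edge between them. The largest induced matching in $P_n$ has size $\lfloor (n+1)/3 \rfloor$, obtained from every third pair $p_{3i+1}p_{3i+2}$. Your conclusion that $G_{q,p}$ is $P_{2q}$-free is therefore unfounded, and in fact it is contradicted by Lemma~\ref{lem:lowerpath}, which exhibits an induced $P_{3q-3}$ in $G_{q,p}$ for large $p$.

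The fix for $P_{3q-1}$ is immediate: take every third pair to see that $P_{3q-1}$ contains $qK_2$, then invoke Lemma~\ref{lem:qk2-free}; this is exactly the paper's argument. Your route to $S_{\nicefrac{3(q-1)}{2},\nicefrac{3(q-1)}{2},1}$-freeness via a contained $P_{3q-2}$ does \emph{not} survive the fix, however: $P_{3q-2}$ contains only $(q-1)K_2$, and $P_{3q-2}$-freeness of $G_{q,p}$ is precisely the open Conjecture~\ref{con:path}. The paper instead shows directly that the spider contains $qK_2$: the short leg contributes one edge, and each long leg of length $\tfrac{3(q-1)}{2}$ contributes $\tfrac{q-1}{2}$ further edges once one steps away from the center.

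On the positive side, your treatment of $S_{2,2,2}$ for $q=3$ is cleaner than the paper's. You correctly note that the three pendant edges form an induced $3K_2$, so Lemma~\ref{lem:qk2-free} already suffices; the paper instead appeals to Corollary~\ref{cor:reducetofinite} and a computer check for $p \leq 8$. For $q \geq 5$ your plan parallels the paper's case analysis (the paper centers the copy of $S_{2,2,2}$ at vertex $2$ rather than $0$, but the substance is the same); the sketch is plausible, though the promised bookkeeping still has to be carried out.
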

\begin{proof}
Notice that an induced path with consecutive vertices $p_0,p_1,\ldots,p_{3q-2}$ contains $qK_2$ as an induced subgraph (with vertex set $\{p_ip_{i+1}~|~i=0 \mod 3\}$).
Similarly, $S_{\nicefrac{3(q-1)}{2},\nicefrac{3(q-1)}{2},1}$ contains $qK_2$ is an induced subgraph; see Figure~\ref{fig:corollary}.
Thus the fact that $G_{q,p}$ is $\{P_{3q-1}, S_{\nicefrac{3(q-1)}{2},\nicefrac{3(q-1)}{2},1} \}$-free follows directly from  Lemma~\ref{lem:qk2-free}.

\begin{figure}
    \centering
    \includegraphics[page=5,scale=1]{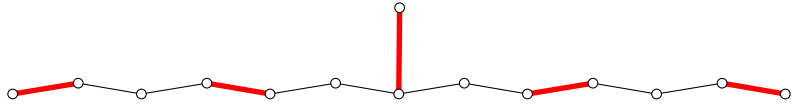}
    \caption{If a graph contains $S_{\nicefrac{3(q-1)}{2},\nicefrac{3(q-1)}{2},1}$ as an induced subgraph, then it also contains $qK_2$.}
    \label{fig:corollary}
\end{figure}

Now let us argue that $G_{q,p}$ is also $S_{2,2,2}$-free.
The case $q=3$ requires slightly different arguments that he case $q \geq 5$.
However, thanks to Corollary~\ref{cor:reducetofinite}, in order to show that for every $p$, the graph $G_{3,p}$ is $S_{2,2,2}$-free, it is sufficient to verify this for $p \leq 8$. We confirmed this using exhaustive computer search.

So from now on assume that $q \geq 5$.
For contradiction suppose that $G_{q,p}$ has an induced subgraph $T$ isomorphic to $S_{2,2,2}$. 
By symmetry of $G_{q,p}$ we can assume that 2 is the unique vertex of $T$ of degree 3, let $i<j<k$ be the neighbors of 2 in $T$,
and let $i',j'$ and $k'$ be their respective neighbors of degree 1 in $T$.
By the definition of $G_{q,p}$, either $i=1$, or $i,j,k >2$.

Consider the case $i=1$. If $(2,j)$ and $(2,k)$ are both of type $(2,1)$, then either $(1,j')$ is of type $(1,0)$ or $(1,2)$.
In the first case there is an edge $ij'$, in the second case $j'\leq j+1 < k$ and thus there is an edge $j'k$, thus we reach a contradiction.
Thus $(2,j)$ is of type $(2,3)$ and $(2,k)$ is of type $(2,1)$. But in that case $k'>j$, and either $(1,k')$ is of type $(1,0)$ and $ik'$ is an edge of $G_{q,p}$, or $(3,k')$ is of type $(3,2)$ and $jk'$ is an edge of $G_{q,p}$, again a contradiction.

Thus we can assume that $i,j,k >2$, in particular $(2,j)$ and $(2,k)$ are of type $(2,1)$.
If $(2,i)$ is also of type $(2,1)$, then $(j',j)$ is either of type $(2,1)$, or of type $(0,1)$.
In the first case, $j'\leq j+1<k$ and there is an edge $j'k$ in $G_{q,p}$.
In the second case, $j'\geq j-1>i$ and as $(i,j')$ is of type $(1,0)$, thus $i'j$ is an edge of $G_{q,p}$, in both cases we reach a contradiction.

It remains to consider the case when $(2,i)$ is of type $(2,3)$, i.e., $i'=i+1=3 < j,j',k,k'$.
Then, $(k,k')$ is either of type $(1,2)$, or of type $(1,0)$. In the first case, $(i',k')$ is of type $(3,2)$, and thus $i'k'$ is an edge of $G_{q,p}$.
In the second case we have $k'\geq k-1 >j$ and $(j,k')$ is of type $(1,0)$, thus $jk'$ is an edge of $G_{q,p}$, a contradiction.
\end{proof}
Now, as an immediate consequence of Lemma~\ref{lem:infinite-obstruction} and Lemma~\ref{lem:infinite-easybounds}, we obtain Theorem~\ref{thm:infinite-gen}, which we restate below.

\thminfinitepathsgen*

Note that for $H = C_5$, i.e., for $q=5$, Theorem~\ref{thm:infinite-gen} shows that the constructed graphs are in particular $P_{14}$-free and $S_{6,6,1}$-free.
It turns out that they are actually $P_{13}$-free and $S_{5,5,1}$-free (and also exclude some other subdivided claws). Here we will make use of Corollary~\ref{cor:reducetofinite}, combined with computer search.

Let us start with analyzing the length of a longest induced path in $G_{5,p}$.
Thus we are interested in applying  Corollary~\ref{cor:reducetofinite} to the case $q=5$ and $F=P_{13}$. Actually,  we can even exclude $F=P_{10}+P_2$, i.e., the graph with two components: one isomorphic to $P_{10}$ and the other isomorphic to $P_2$.
Note that $(P_{10}+P_2)$-free graphs are in particular $P_{13}$-free.
Furthermore, the graph $G_{5,p}$ excludes the following subdivided claws: $S_{5,5,1}$, $S_{11,1,1}$, and $S_{8,2,1}$.
These results, together with Lemma~\ref{lem:infinite-obstruction}, give us Theorem~\ref{thm:infinite}.

\thminfinite*

\subsection{Longest induced paths in $G_{q,p}$}

Theorem~\ref{thm:infinite}, and the fact that from the result of Chudnovsky et al.~\cite{DBLP:journals/jct/ChudnovskyGSZ20}
it follows that for every $p$, the graph $G_{3,p}$ is $P_7$-free (while Lemma~\ref{lem:infinite-easybounds} only gives $P_8$-freeness),
suggest that the bound on the length of a longest induced path given by Theorem~\ref{thm:infinite-gen} is not optimal also in the other cases. This evidence suggests the following conjecture.

\begin{conjecture}\label{con:path}
    Let $q \geq 3$ be odd. For every $p \geq 1$, the graph $G_{q,p}$ is $P_{3q-2}$-free. 
\end{conjecture}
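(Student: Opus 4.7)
The plan is to apply a pigeonhole argument on the ``type cycle'' $C_q$ obtained by reducing positions modulo $q$. By Corollary~\ref{cor:reducetofinite}, for each fixed $q$ it suffices to verify the conjecture for $p \leq 3q - 1$, so I focus on a uniform combinatorial argument covering the bulk of the cases. Let $P = (w_0, \ldots, w_L)$ with $L + 1 \geq 3q - 2$ be a hypothetical induced path in $G_{q,p}$, and write $a_i$ and $t_i = a_i \bmod q$ for the position and type of $w_i$. First I assume $P$ avoids the wraparound edge $\{0, qp-3\}$; the remaining case reduces by splitting $P$ at that edge and applying the same analysis to each piece. Under this assumption, Observation~\ref{obs:neighbors} forces the type sequence $(t_0, \ldots, t_L)$ to be a walk on $C_q$ making $\pm 1$ steps at every move.

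Since the walk has at least $3q - 3$ steps distributed among the $q$ edges of $C_q$, pigeonhole yields some edge $\{s, s+1\}$ of $C_q$ traversed at least three times (the case $q = 3$ only gives an average of $2$ and will require a separate bootstrap). Each such traversal is realized in $G_{q,p}$ as either an outer edge $\{i, i+1\}$ with $i \in V_s$, or a chord $\{j, j + qk - 1\}$ with $j \in V_{s+1}$ and $k \in [p-1]$. The key arithmetic observation driving the proof is: for any two outer edges $\{i, i+1\}$ and $\{i', i'+1\}$ of this type with $i < i'$, the difference $i' - (i+1) = qm - 1$ for some $m \in [p-1]$ (as $i' - i$ is a positive multiple of $q$ bounded by $qp - 3$), so $i+1$ and $i'$ are joined by a chord in $G_{q,p}$. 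Analogous adjacencies arise from pairs of chord edges and from mixed outer-chord pairs. Three type-$\{s, s+1\}$ edges of $P$ thus yield several ``forced'' adjacencies in $G_{q,p}$; the target is to exhibit at least one such adjacency between two \emph{non-consecutive} vertices of $P$, contradicting the induced property.

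The main obstacle lies in the case analysis ensuring that the forced adjacencies are not all absorbed by actual edges of $P$: in degenerate configurations the produced endpoints might happen to be adjacent on $P$ itself. Handling this cleanly requires simultaneously tracking the outer-versus-chord status of the three edges, their chord indices $k$, and their relative positions along $P$, which produces a substantial number of subcases. A cleaner path may emerge by refining the pigeonhole to track directed traversals of $C_q$-edges and exploiting that $q$ is odd, or by an inductive peeling of $P$ from its endpoints. The wraparound subcase reduces to the main analysis via the splitting trick described above, and the small-$p$ base cases can be verified by computer search, as was done for $q = 3$ by Chudnovsky et al.~\cite{DBLP:journals/jct/ChudnovskyGSZ20} and for $q = 5$ in the present paper.
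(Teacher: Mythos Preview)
The statement you are attempting to prove is presented in the paper as an open \emph{conjecture}, not a theorem; the paper gives no proof. It only establishes the weaker $P_{3q-1}$-freeness (via the $qK_2$-freeness of Lemma~\ref{lem:qk2-free}), verifies the cases $q\in\{3,5\}$ by computer search through Corollary~\ref{cor:reducetofinite}, and then shows in Lemma~\ref{lem:lowerpath} that the conjectured bound would be tight. There is thus no paper proof to compare against.

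Your proposal is, by your own account, not a proof but a plan with acknowledged gaps. You note that $q=3$ ``will require a separate bootstrap,'' you describe the core case analysis as ``the main obstacle'' and leave it unresolved, and you speculate that ``a cleaner path may emerge.'' What remains is a sketch.

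On the substance of the plan: the pigeonhole step (for $q\geq 5$) and your arithmetic observation that two outer edges $\{i,i{+}1\}$ and $\{i',i'{+}1\}$ with $i,i'\in V_s$, $i<i'$, force the chord $\{i{+}1,i'\}$ in $G_{q,p}$ are both correct. But the difficulty is precisely what you flag and do not resolve: the forced adjacencies can be absorbed by $P$ itself. For instance, the path segment $i,\;i{+}1,\;i',\;i'{+}1$ already traverses the type-edge $\{s,s{+}1\}$ three times (two outer edges and one chord), and the ``forced'' chord $\{i{+}1,i'\}$ is exactly the middle edge of $P$. Handling all such degeneracies across every mixture of outer edges and chords, in both orientations along $P$, is the actual content of the conjecture, and nothing in your write-up carries it out. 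Your treatment of the wraparound edge is also not correct as stated: splitting $P$ at $\{0,qp{-}3\}$ yields two subpaths each strictly shorter than $P_{3q-2}$, so ``applying the same analysis to each piece'' gives no contradiction; at best one can pigeonhole over the remaining $3q-4$ non-wraparound edges jointly, which still leaves the same unresolved case analysis on the full path.
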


We conclude this section by showing that the bound from Conjecture~\ref{con:path}, if true, is best possible.

\begin{lemma}\label{lem:lowerpath}
Let $q \geq 3$ be odd. If $p \geq 2q+1$, then $G_{q,p}$ contains $P_{3q-3}$ as an induced subgraph.
\end{lemma}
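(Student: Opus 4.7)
The plan is to exhibit an explicit sequence of $3q-3$ vertices in $V(G_{q,p})$ that induces a $P_{3q-3}$. Viewing $G_{q,p}$ as a circulant graph on $\mathbb{Z}_{qp-2}$, its connection set is $S = \{\pm 1\} \cup \{\pm(kq-1) : 1 \le k \le p-1\}$ modulo $qp-2$, so the task reduces to finding distinct residues $v_1, \ldots, v_{3q-3} \in \mathbb{Z}_{qp-2}$ with $v_{i+1}-v_i \in S$ for every $i$ and $v_j-v_i \notin S$ whenever $j - i \ge 2$.

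The template, motivated by the smallest case $q=3$, is a \emph{two-arm} path. Fix a chord offset $d = k_0 q - 1$; set $v_i = (i-1)d$ along the first arm, insert one ``$-1$'' bridge, and continue by $v_i = (i-2)d - 1$ along the second arm. For $q=3,\, p=7,\, d = 3q-1 = 8$, this yields $(0, 8, 16, 15, 4, 12)$, which one checks directly to be an induced $P_6$ in $G_{3,7}$. Verification for general $q$ then splits into (i) consecutive differences lie in $S$ (immediate by construction) and (ii) for all $i < j-1$, the pair difference $v_j - v_i$ --- equal to $(j-i)d$ or $(j-i)d - 1$ modulo $qp-2$ --- lies outside $S$.

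The main obstacle is selecting $d$ for $q \ge 5$. The naive $d = 3q-1$ already fails for $q=5$: with $p=11$ one has $v_9 - v_1 \equiv 44 \equiv -9 \pmod{53}$, and $9 = 2q-1 \in S$. One must therefore either pick a chord $d = k_0 q - 1$ with a different value of $k_0$ so that none of the partial sums $\ell d$ or $\ell d - 1$ with $|\ell| \le (3q-3)/2$ hit $S$ modulo $qp-2$, or split the path into more than two arms by inserting additional $-1$ bridges. The hypothesis $p \ge 2q+1$, giving $qp-2 \ge 2q^2+q-2$, provides enough ``room'' in $\mathbb{Z}_{qp-2}$ relative to the size-$\Theta(p)$ set $S$ for such a choice to exist; the verification of (ii) then reduces to a finite case analysis parameterised by the residue of $v_j - v_i$ modulo $q$, using the precise form of the allowed chords $kq-1$ to rule out collisions for each candidate pair.
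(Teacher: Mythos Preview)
Your proposal is not a proof: the central construction is missing. You correctly set up the problem as finding $3q-3$ residues whose consecutive differences lie in $S$ and whose non-consecutive differences avoid $S$, and your two-arm template with $d=3q-1$ does work for $q=3$. But for $q\ge 5$ you only observe that this choice fails and then assert that \emph{some} choice of $d$ (or some multi-arm variant) must exist because ``$p\ge 2q+1$ provides enough room''. That is not an argument: you never specify which $d$ to take, you never describe the multi-arm pattern, and you never carry out the ``finite case analysis'' you promise. The proposal stops exactly where the work begins.

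The paper does not use a two-arm path at all; it writes down a completely explicit ``staircase''. After $a_0=0$, the vertices come in blocks of three,
\[
a_{3k+1}=q^{2}-1-k(q+1),\qquad a_{3k+2}=2q^{2}-2-k(q+1),\qquad a_{3k+3}=2q^{2}-3-k(q+1),
\]
so the sequence of types is $(0,\,q-1,\,q-2,\,q-3,\,q-2,\,q-3,\,q-4,\,\ldots,\,1,\,0,\,1,\,0)$. Consecutive steps cycle through a long chord of length $q^{2}-1$, another chord, and a single $-1$ step; the non-adjacency of every non-consecutive pair is then a short case split on the residue of the index modulo $3$, using only the type criterion of Observation~\ref{obs:neighbors}. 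The hypothesis $p\ge 2q+1$ is used solely to guarantee that the largest vertex $a_2=2q^{2}-2$ lies in $[qp-3]_0$, so that no wrap-around occurs and that criterion applies cleanly.

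If you want to rescue your approach you must actually exhibit the chord $d$ (as a function of $q$) and verify all $\binom{3q-3}{2}-(3q-4)$ non-edges, or give a genuine existence argument rather than a heuristic dimension count.
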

\begin{proof}
Let $q$ and $p$ be as in the statement of the lemma.
We will define a sequence of $3q-3$ vertices \[P=(a_0,a_1,a_2,a_3,\ldots,a_{3q-4})\] that induce a path in $G_{q,p}$.
For $j \in [2]_0$ let $Q_j=\{i~|~i = j \mod 3, i \in [3q-4]\}$. 
Let $a_0=0$, and for $i \in [3q-4]$ we set
\begin{align*}
    a_i=\begin{cases}
        q^2-1 - k(q+1) & \textrm{ if }i \in Q_1, k=(i-1)/3, \\
        2q^2-2-k(q+1) & \textrm{ if }i \in Q_2, k=(i-2)/3. \\
        2q^2-3-k(q+1) & \textrm{ if }i \in Q_0, k=(i-3)/3, \\
    \end{cases}
\end{align*}
Observe that since $p\geq 2q+1$, the graph $G_{q,p}$ has at least $2q^2+q-2$ vertices, so the vertices $a_i$ are well-defined. Observe that for every $i \in [3q-5]$, if $i = 1 \mod 3$ then $a_i < a_{i+1}$, and otherwise $a_i > a_{i+1}$. In particular, if $i = 2 \mod 3$, then $a_{i+1}=a_i-1$.
Moreover, from the definition it follows that the sequence of the types of the consecutive vertices of $P$ is \[(0,q-1,q-2,q-3,q-2,q-3,q-4,q-3,q-4,q-5,\ldots,2,1,0,1,0).\]
Now to see that $P$ is an induced path, we show that for every $i \in [3q-5]_0$ that is of type $s \in [q-1]_0$ the edge $a_ia_{i+1}$ exists and no edge of the form $a_ia_j$ for $j>i+1$ exists.

\smallskip
\noindent\textbf{\boldmath Case 1: $i=0$.} Observe that $0a_1\in E(G_{q,p})$ since $(0,a_1)$ is of type $(0,q-1)$. On the other hand, for every $i \in \{2,\ldots, 3q-4\}$ we have $0a_i \notin E(G_{q,p})$, as $(0,a_i)$ is not of type $(0,q-1)$. 

\smallskip
\noindent\textbf{\boldmath Case 2: $i \in Q_1$.} Observe that for every $j \in [3q-4] \setminus \{i\}$, if $j \in Q_0 \cup Q_2$ or $j \in Q_1, j<i$, then $a_i < a_j$. Otherwise (if $j \in Q_1$ and $j>i$), $a_i>a_j$. Thus, as $(a_i,a_{i+1})$ is of type $(s,s-1)$, the edge $a_ia_{i+1}$ belongs to $E(G_{q,p})$. On the other hand, there is no $j >i+1$ such that (i) $a_j>a_i$ and $(a_i,a_j)$ is of type $(s,s-1)$, or (ii) $a_j<a_i$ and $(a_i,a_j)$ is of type $(s,s+1)$, thus $a_ia_j \notin E(G_{q,p})$ for any $j >i+1$.

\smallskip
\noindent\textbf{\boldmath Case 3: $i \in Q_2$.} Observe that for every $j \in [3q-4] \setminus \{i\}$, if $j \in Q_1$ or $j>i$, then $a_i > a_j$. Otherwise, $a_i>a_j$. As $a_{i+1}=a_i-1$, the edge $a_ia_{i+1}$ belongs to $E(G_{q,p})$. On the other hand, there is no $j >i+1$ such that (i) $a_j>a_i$ or (ii) $a_j<a_i$ and $(a_i,a_j)$ is of type $(s,s+1)$. Thus $a_ia_j \notin E(G_{q,p})$ for any $j >i+1$.

\smallskip
\noindent\textbf{\boldmath Case 3: $i \in Q_0$.}  Observe that for every $j \in [3q-4] \setminus \{i\}$, if $j \in Q_0 \cup Q_2$ and $j <i$, then $a_i < a_j$. Otherwise (if $j \in Q_1$ or $j \in Q_0 \cup Q_2$ and $j>i$), $a_i>a_j$. Thus, as $(a_i,a_{i+1})$ is of type $(s,s+1)$, the edge $a_ia_{i+1}$ belongs to $E(G_{q,p})$. On the other hand, there is no $j >i+1$ such that (i) $a_j>a_i$ or (ii) $a_j<a_i$ and $(a_i,a_j)$ is of type $(s,s+1)$. Thus $a_ia_j \notin E(G_{q,p})$ for any $j >i+1$.
\end{proof}

\section{Conclusion} 
We conclude the paper with pointing out some possible directions for further research.

A very natural question is to close the gaps concerning the finiteness of the family of minimal obstructions to $C_5$-coloring among $F$-free graphs, when $F$ is a path or a subdivided claw.
Note that in the former case, the open cases are $H = P_t$ where $t \in [9,12]$, and in the latter case, $H = S_{a,b,1}$ where $a \geq b \geq 1$ and (i) $b=1$ and $4 \leq a \leq 10$, or (ii) $b=2$ and $3 \leq a \leq 7$ or (iii) $b \in \{3,4\}$ and $b \leq a \leq 11-b$.

If it comes to $H$-coloring for $H$ other than a clique or $C_5$, then the situation is much less clear. Indeed, recall that in general, we do not know a complete classification of the complexity of $H$-coloring in $F$-free graphs. In spite of that, there are still several research directions that may be worth following.

First, we believe it would be interesting to classify what happens in $P_t$-free graphs for small fixed values of $t$, with a particular focus on the smallest non-trivial case, namely the class of $P_5$-free graphs. Note that we already know that if $H$ is an odd cycle, then the family of $P_5$-free minimal obstructions to $H$-coloring is finite~\cite{DBLP:journals/dam/KaminskiP19}, and it is infinite if $H$ is a clique on at least four vertices~\cite{DBLP:journals/dam/HoangMRSV15}. 
The foregoing study on $H$-coloring $P_t$-free graphs suggests that if $H$ is \emph{prime} (i.e., admits no non-trivial decomposition with respect to the so-called direct product~\cite{hammack2011handbook}), then the problem behaves quite differently depending whether $H$ contains a $C_4$ as a subgraph or not.
In order to stimulate research in this direction, we suggest the following (bold) conjecture.

\begin{conjecture}\label{con:c4-infinite}
If $H$ is a prime, non-bipartite core, there is a finite family of minimal obstructions to $H$-coloring $P_5$-free graphs if and only if $H$ does not contain $C_4$ as a subgraph.
\end{conjecture}

\paragraph{Acknowledgements.}
The authors are grateful to R\'emi Di Guardia for fruitful and inspiring discussions that led to this project.
This project itself was initiated at Dagstuhl Seminar 22481 ``Vertex Partitioning in Graphs: From Structure to Algorithms.''
We are grateful to the organizers and other participants for a productive atmosphere.
Jorik Jooken is supported by an FWO grant with grant number 1222524N. Jan Goedgebeur is supported by Internal Funds of KU Leuven and an FWO grant with grant number G0AGX24N. 
Karolina Okrasa is supported by a Polish National Science Centre grant no. 2021/41/N/ST6/01507.

\bibliographystyle{abbrv}
\bibliography{refs}

\appendix
\section*{Appendix: Implementation and testing of Algorithm~\ref{algo:expand}}\label{sec: appendix}

\paragraph{Details related to the efficient implementation of Algorithm~\ref{algo:expand}.}\label{sec:app1}
Some care is needed in order to make the implementation of Algorithm~\ref{algo:expand} run fast in practice. For example, if it is necessary to verify whether or not a graph simultaneously has properties A and B, it is not necessary to verify property B if one already knows that property A does not hold. Moreover, the time needed to verify one property can be drastically different from the time needed to verify another one. For this reason, we experimentally determined that it is beneficial to first test for $F$-freeness, then test for $H$-colorability (if still necessary), and finally test for isomorphism (if still necessary). This agrees with the experimental observations from~\cite{DBLP:journals/jgt/GoedgebeurS18} for $k$-coloring. For isomorphism testing, we used the program \texttt{nauty}~\cite{DBLP:journals/jsc/McKayP14} for computing the canonical form of a graph. This is stored in a balanced binary search tree to allow efficient insertion and lookup.

Since the graphs that we deal with in this paper are all small, we represent sets internally as bitvectors. This allows the hardware to efficiently support standard operations such as the union or intersection of sets. For example, when the algorithm adds a new vertex $x$ and edges between $x$ and previous vertices in all possible ways, the algorithm keeps track of a set of edges that would result in an induced $K_3$ after adding the edge. Bitvectors allow the algorithm to efficiently iterate over only those edges that would not result in an induced $K_3$ (note that, except for $K_3$, all minimal obstructions to $C_5$-coloring are $K_3$-free). The source code of our algorithm is made publicly available at~\cite{gitrepo}.

\paragraph{Independent verifications and sanity checks.}
Apart from Algorithm~\ref{algo:expand}, we also implemented other algorithms to independently verify some of our results and reproduce existing results from the literature. All results are in agreement and this provides strong evidence for the correctness of our implementation. More precisely, we adapted our algorithm to find minimal obstructions to $H$-coloring which are $\{F_1,F_2\}$-free for several triples $(H,F_1,F_2)$ of graphs.
Amongst others, this allowed us to reproduce already known exhaustive lists of
$\{P_5,\text{dart}\}$-free minimal obstructions to $k$-coloring (for $k \in \{4,5,6\}$)~\cite{DBLP:conf/cocoa/XiaJGH23}, 
$\{P_5,\text{gem}\}$-free minimal obstructions to $k$-coloring (for $k \in \{3,4\}$)~\cite{DBLP:journals/dam/CaiGH23}, 
$\{P_5,C_5\}$-free minimal obstructions to $4$-coloring~\cite{DBLP:journals/dam/HoangMRSV15} and $\{P_5,H\}$-free minimal obstructions to $4$-coloring for each $H \in \{K_{1,3}+P_1,K_{1,4}+P_1,\overline{K_3+2P_1}\}$~\cite{DBLP:journals/corr/abs-2403-05611}. In each of these cases, our algorithm terminates. Apart from exhaustive generation, we also used our algorithm to reproduce all $P_6$-free minimal obstructions to $3$-coloring until order 13~\cite{DBLP:journals/jct/ChudnovskyGSZ20} and all $\{P_7,C_3\}$-free minimal obstructions to $3$-coloring until order 16~\cite{DBLP:journals/jgt/GoedgebeurS18}. We also adapted the algorithm \texttt{geng}~\cite{DBLP:journals/jsc/McKayP14} to generate the same minimal obstructions for $C_5$-coloring as our Algorithm~\ref{algo:expand}. However, note that by only using \texttt{geng}, one cannot conclude that there are no larger minimal obstructions than those found by \texttt{geng}. This further motivates the need for a specialised algorithm (cf.\ Algorithm~\ref{algo:expand}).

We also implemented two independent algorithms for verifying whether a graph $G$ can be $C_5$-colored.
The first one is a simple recursive backtracking algorithm that colors the vertices of $G$ one by one, but backtracks as soon as no valid colors can be found for a given vertex.
The second one formulates the problem as an integer program and uses a mixed integer programming solver to verify if there are any feasible solutions. A graph $G$ can be $C_5$-colored if and only if the following constraints can be satisfied:
\begin{align}
\sum_{v \in V(C_5)}x_{u,v} &= 1 &\forall u \in V(G) \label{constrA} \\
x_{u,v} &\leq \sum_{v' \in N_{C_5}(v)} x_{u',v'} &\forall u \in V(G), u' \in N_G(u) \label{constrB}\\
x_{u,v} &\in \{0,1\} &\forall u \in V(G), v \in V(H) \label{constrC}
\end{align}
Here, constraints \eqref{constrC} indicate the binary domains of the variables $x_{u,v}$; such a variable will be equal to 1 if vertex $u \in V(G)$ is mapped to vertex $v \in V(C_5)$.
Constraints~\eqref{constrA} ensure that every vertex $u \in V(G)$ must be mapped to precisely one vertex $v \in V(C_5)$.
Finally, constraints~\eqref{constrB} ensure that if a vertex $u \in V(G)$ is mapped to vertex $v \in V(C_5)$,
then all neighbors $u' \in V(G)$ must be mapped to neighbors $v'$ of $v$ in $C_5$.
\end{document}